\numberwithin{equation}{section}
\DeclareFontFamily{U}{BOONDOX-calo}{\skewchar\font=45}
\DeclareFontShape{U}{BOONDOX-calo}{m}{n}{
  <-> s*[1.05] BOONDOX-r-calo}{}
\DeclareFontShape{U}{BOONDOX-calo}{b}{n}{
  <-> s*[1.05] BOONDOX-b-calo}{}
\DeclareMathAlphabet{\mathscr}{U}{BOONDOX-calo}{m}{n}
\def\@makechapterhead#1{
  {\parindent \z@ \raggedright \normalfont
   \Huge\bfseries \thechapter. #1
   \par\nobreak
   \vskip 20\p@
}}
\def\@makeschapterhead#1{
  {\parindent \z@ \raggedright \normalfont
   \Huge\bfseries #1
   \par\nobreak
   \vskip 20\p@
}}
\theoremstyle{plain}
\newtheorem{thm}{Theorem}[section]
\newtheorem{lemma}[thm]{Lemma}
\newtheorem{prop}[thm]{Proposition}
\theoremstyle{plain}
\theoremstyle{remark}
\newtheorem{rem}{Remark}[section]
\newtheorem{example}{Example}[section]
\newcommand{\R}{\mathbb{R}}
\newcommand{\N}{\mathbb{N}}
\newcommand{\Z}{\mathbb{Z}}
\newcommand{\T}[1]{#1^\top}
\newcommand{\goto}{\rightarrow}
\newcommand\eqdef{\mathrel{\overset{\makebox[0pt]{\mbox{\normalfont\tiny\sffamily def}}}{=}}}
\newcommand\restr[2]{{
  \left.\kern-\nulldelimiterspace 
  #1 
  \vphantom{\big|} 
  \right|_{#2} 
  }}
\newcommand{\mres}{%
  \,\raisebox{-.127ex}{\reflectbox{\rotatebox[origin=br]{-90}{$\lnot$}}}\,%
}
\DeclareMathOperator*{\argmin}{\arg\!\min}
\DeclareMathOperator*{\esssup}{ess\,sup\,}
\def\Xint#1{\mathchoice
   {\XXint\displaystyle\textstyle{#1}}%
   {\XXint\textstyle\scriptstyle{#1}}%
   {\XXint\scriptstyle\scriptscriptstyle{#1}}%
   {\XXint\scriptscriptstyle\scriptscriptstyle{#1}}%
   \!\int}
\def\XXint#1#2#3{{\setbox0=\hbox{$#1{#2#3}{\int}$}
     \vcenter{\hbox{$#2#3$}}\kern-.5\wd0}}
\def\dashint{\Xint-}
\newcommand{\tc}[1]{\textcolor{black}{#1}}
\newcommand{\tg}[1]{\textcolor{black}{#1}}
\newcommand{\tb}[1]{\textcolor{black}{#1}}
\def\zf{\mathfrak{z}}
\def\zfb{\tilde{\mathfrak{z}}}
\DeclareMathOperator*{\d2d}{\bar{\nabla}^{\rm 2d}}
\newcommand{\PH}[1]{P\text{-}H^#1(0,L;\R^3)}
\newcommand{\PHgen}[2]{P\text{-}H^#1(0,L;\R^{#2})}
\def\yb{\tilde{y}}
\def\ybk{\yb^{(k)}}
\def\xb{x}
\def\Qb{\tilde{Q}}
\def\Lkex{\Lambda_k^{\rm ext}}
\def\Lkexc{\Lambda_k'^{,\mathrm{ext}}}
\def\Os{\Omega^{\rm ext}}
\def\Lkexb{\tilde{\Lambda}_k^{\rm ext}}
\def\Lkexcb{\tilde{\Lambda}_k'^{,\mathrm{ext}}}
\newcommand{\yp}[1]{\smash{\overset{+}{\mathscr{y}}}^{(#1)}}
\newcommand{\yw}[1]{\smash{\overset{\leftharpoonup}{\mathscr{y}}}^{(#1)}}
\newcommand{\Rp}[2]{\smash{\overset{+}{R}}_{#1}^{(#2)}}
\newcommand{\ypp}[2]{\smash{\overset{+}{y}}_{#1}^{(#2)}}
\newcommand{\md}{\textup{d}}
\newcommand{\e}{\varepsilon}
\newcommand{\wto}{\rightharpoonup}
\renewcommand{\a}{\alpha}
\renewcommand{\b}{\beta}
\newcommand{\g}{\gamma}
\newcommand{\ph}{\varphi}
\renewcommand{\O}{{\Omega}}
\newcommand{\Id}{\mathrm{Id}}
\newcommand{\calE}{\mathcal{E}}
\newcommand{\calC}{\mathcal{C}}
\newcommand{\calA}{\mathcal{A}}
\newcommand\pl{\partial}
\begin{document}
\title{A continuum model for brittle nanowires derived from an atomistic description by $\Gamma$-convergence}
\author{Bernd Schmidt \\ \textsc{\footnotesize{Institut f\"{u}r Mathematik, Universit\"{a}t Augsburg, D-86135 Augsburg, Germany}} \\ \footnotesize{Email address: \href{mailto:bernd.schmidt@math.uni-augsburg.de}{\texttt{bernd.schmidt@math.uni-augsburg.de}}} \\ 
   \and Ji\v{r}\'{\i} Zeman \\ \textsc{\footnotesize{Institut f\"{u}r Mathematik, Universit\"{a}t Augsburg, D-86135 Augsburg, Germany}} \\ \footnotesize{Email address: \href{mailto:geozem@seznam.cz}{\texttt{geozem@seznam.cz}}}}
\date{\today}
\maketitle
\begin{abstract}
Starting from a particle system with short-range interactions, we derive a continuum model for the bending, torsion, and brittle fracture of inextensible rods moving in three-dimensional space. As the number of particles tends to infinity, it is assumed that the rod's thickness is of the same order as the interatomic distance. Fracture energy in the \mbox{$\Gamma$-limit} is expressed by an implicit cell formula, which covers different modes of fracture, including (complete) cracks, folds, and torsional cracks. In special cases, the cell formula can be significantly simplified. Our approach applies e.g.\ to atomistic systems with Lennard-Jones-type potentials and is motivated by the research of ceramic nanowires.
\smallskip 

\noindent\textbf{Keywords.} Discrete-to-continuum limits, dimension reduction, atomistic models, na\-no\-wires, elastic rod theory, brittle materials, variational fracture, $\Gamma$-convergence.
\smallskip

\noindent\textbf{Mathematics Subject Classification.} 74K10, 49J45, 74R10, 70G75.
\end{abstract}
\section{Introduction}

Ceramic and semiconductor nanowires (composed of Si, SiC, $\text{Si}_3\text{N}_4$, $\text{TiO}_2$, or ZnO etc.) under loading exhibit large deflections, but also brittle or ductile fracture. \cite{mechNW} Their mechanical behaviour is often very different from that of bulk materials, size- and structure-dependent, and  influenced by surface energy. Laboratory testing at the nanoscale still poses various challenges, so modelling and simulation play an important role in the advancement of nanotechnology. \cite{nanoMod} 

To set off on a path towards elastic-fractural modelling of nanowires, in this article we derive from three-dimensional atomistic models a continuum theory for ultrathin rods whose elastic energy is of the order corresponding to bending or torsion. After treating the purely elastic case in \cite{elRods}, here we extend our model considerably by adding liability of the material to develop brittle cracks.

Our work stands at the crossroads of three paths of research in applied analysis which are:
\begin{enumerate}
	\item[(DR)] rigorous derivation of elasticity theories for thin structures (often referred to as \textit{dimension reduction});
	\item[(D-C)] discrete-to-continuum limits;
	\item[(F)] fracture mechanics.
\end{enumerate}

An important tool in all these three branches is $\Gamma$-convergence. \cite{BraiBeg,BraiHand}

In (DR) the aim is to understand the relation between three-dimensional elasticity theory and effective theories for lower-dimensional bodies, such as plates, rods or beams. \cite{Ciarlet2, Antman, OReilly} With the pioneering contributions of L. Euler and D. Bernoulli, the journey started more than two centuries before the first nanowires were manufactured. Yet, most mathematically rigorous derivations of such theories first appeared no sooner than in the 1990s. \cite{strings, membranes, Anze} A decade later, the famous discovery of a quantitative rigidity estimate in \cite{FrM02} brought forth an abundance of works on bending theories. \cite{FrM02,FrM06,MMh4}

As for (D-C), `establishing the status of elasticity theory with respect to atomistic models' was listed by Ball among outstanding open problems in elasticity. \cite{openPbs} Research has been devoted to studying the Cauchy--Born rule \cite{valFail,EMing}, pointwise limits of interaction energies \cite{blanc} and their $\Gamma$-limits \cite{AC,BS09,BrS13}, or to finding atomistic deformations approximating a given solution of the equations of elasticity \cite{OT13,BrS16,Br17}.  See also \cite{blanc2} for a survey. 

The interest of mathematicians in (F) was particularly ignited after Francfort and Marigo \cite{FM98} elaborated on the influential model by Griffith, using modern variational methods (see e.g. \cite{F20yrs,BFM} for further references). In variational models of fracture, be it \textit{brittle} or \textit{cohesive} \cite{Barenblatt}, we typically find functionals involving the sum of elastic and fracture energy:
\begin{equation}\label{eq:fracE}
\int_\O W(\nabla y(x))\md x+\int_{J_y}\kappa(y^+(x)-y^-(x),\nu(x))\md\mathcal{H}^{d-1}(x).
\end{equation}
In the above, $W\colon\R^{3\times 3}\goto[0,\infty)$ stands for the stored energy density of a material body $\O\subset\R^d$, $d\in\{2,3\}$, $y^+-y^-$ is the jump of the deformation $y:\O\goto\R^d$ across the crack set $J_y$, $\nu$ denotes the normal vector field to $J_y$, and $\kappa\colon(\R^d)^2\goto[0,\infty]$ is the fracture toughness.

Given the myriads of physical situations that emerge in modern materials science, it seems natural that researchers have made efforts to bridge some of the gaps between (DR), (D-C) and (F). 

Combining (DR) and (D-C) is motivated by the need of accurate models for thin structures in nanoengineering, such as thin films or nanotubes. \cite{FJ00,BS08,BS08prop,ABC08} Interestingly, when the thickness $h$ of the reference crystalline body is very small (i.e. comparable to the interatomic distance $\e$), the simultaneous $\Gamma$-limit as $\e\goto 0+$, $h\goto 0+$ gives rise to new \textit{ultrathin plate} or \textit{rod theories} which could not be obtained by (DR) in the purely continuum setting. \cite{BS06,BrS19,elRods}

Atomistic effects also lie at the core of crack formation and propagation. \cite{Hud20,atomFrac} However, up to now combinations of (D-C) and (F) have only been explored in specific situations such as one-dimensional chains of atoms \cite{braiCica,scaSchlo,jansen}, scalar-valued models \cite{BGlong}, or cleavage in crystals \cite{FrdS14,FrdS15,FrdS15b}. 

Similarly, despite the recent progress, theories uniting (DR) and (F) are still under development. In linearized elasticity, models for brittle plates \cite{BabHenao,AlmiTasso,FPZ10,baldelli}, beams \cite{Ginster} or shells \cite{fracShells} have been derived mostly using a weak formulation in $SBD$ or $GSBD$ function spaces \cite{BD,GBD}. The nonlinear setting of membranes \cite{BraiFonseca,babadjian,membNotPenet}, on the other hand, employs the more regular spaces $SBV$ and $GSBV$. \cite{SBVbook} As for nonlinear bending theories, the lack of a piecewise quantitative rigidity estimate in 3D presents an obstacle, so the result of \cite{BS17} with a dimension reduction from 2D to 1D seems rather isolated; we also refer to \cite{FrVoids,BZK} for materials with voids.

\begin{figure}[h]\label{fig:NW}
  \caption{Fracture of a thin rod composed of atoms.}
  \centering
\includegraphics[width=7.5cm]{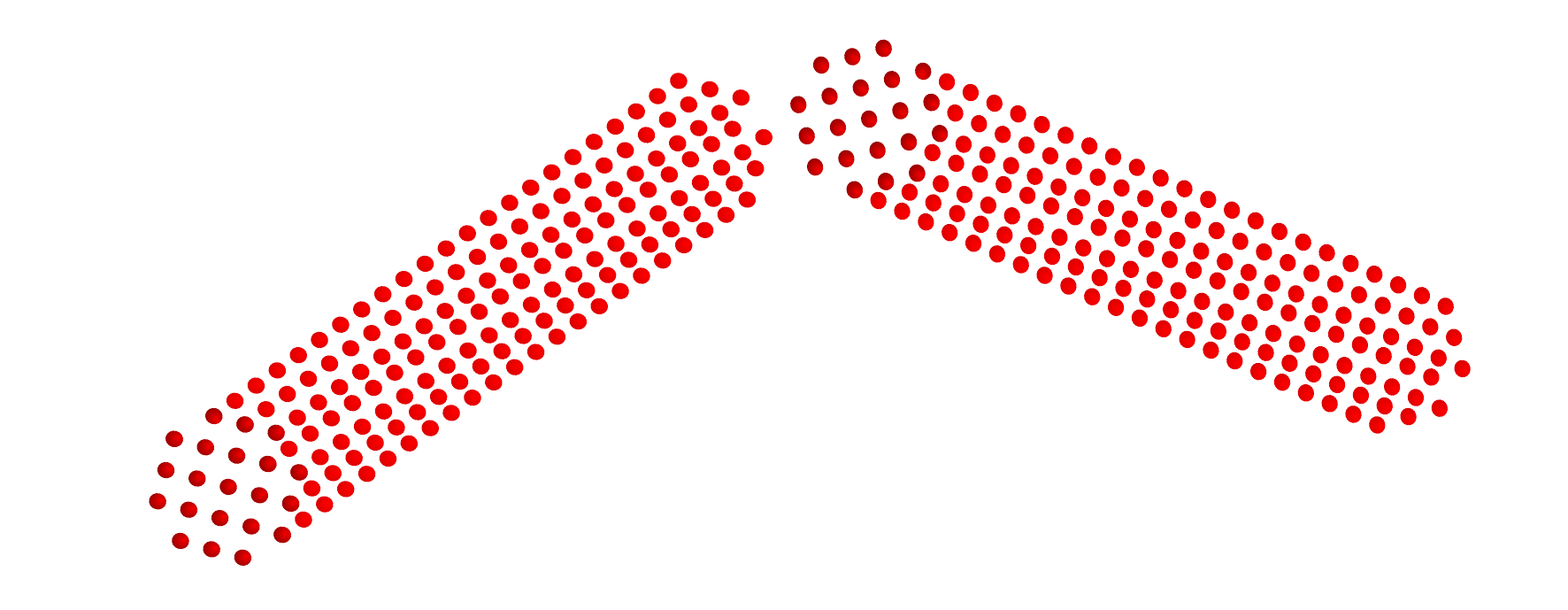}
\end{figure}

In this article, we treat a problem that falls into all three branches (DR), (D-C) and (F). Our main Theorem \ref{Gamma} provides the $\Gamma$-limit of atomic interaction energies defined on cubic crystalline lattices in the shape of a slender rod. Unlike in the purely elastic model from \cite{elRods}, we now replace the interaction potentials (expressed by a cell energy function $W_{\rm cell}$ like in e.g. \cite{valFail,Conti,BS06}) with a sequence $(W_{\rm cell}^{(k)})_{k=1}^\infty$ of cell energies to ensure that elastic deformations (bending and torsion) are comparably favourable in terms of energy as cracks \tc{(see Figure \ref{fig:NW} for an illustration)}. This is specifically expressed in condition \tc{(W5)} for the constants $(\bar{c}_1^{(k)})_{k=1}^\infty$, which give a lower bound on the cost of placing atoms far away from each other (see Subsection \ref{sec:E}). Physically we can interpret this as considering a sequence of materials that are mutually similar but are characterized by different values of material parameters. The limiting strain energy has, just like in \eqref{eq:fracE}:
\begin{enumerate}
\item A bulk part that coincides with its counterpart in \cite{elRods} and features an \textit{ultrathin correction} and \textit{atomic surface layer terms}, neither of which appears in the corresponding rod theory \cite{MMh4} derived by (DR) without (D-C). These traits might make a model better-suited for the description of nanostructures.
\item A fracture part which turns out to be a weighted sum over the singular set of a limiting deformation. The weights are given by an implicit cell formula $\ph=\ph(y^+-y^-,(R^-)^{-1}R^+)$, where $y^+-y^-\in\R^3$ denotes the jump of the deformation mapping at a specified crack point and $(R^-)^{-1}R^+\in{\rm SO}(3)$ is related to kinks/folds or torsional rupture.
\end{enumerate}

Implicit cell formulas arise in $\Gamma$-convergence problems in homogenization \cite{BraiHand} or phase transitions \cite{2Dsolid,surfE2D,phasTra}.

To comment on some important aspects of the proofs, in the \textit{liminf inequality} we first derive a preliminary cell formula by a blowup technique reminiscent of \cite{blowup,SBVbook} and then relate it to a more simple asymptotic formula which uses rigid boundary values (cf. \cite{polycrystals}). The atomistic setting allows us to circumvent the unavailability of a 3D piecewise rigidity theorem in $SBV$ (in fact, it is enough to work with piecewise Sobolev functions here).  The main challenge of our analysis is, however, to provide a matching \textit{limsup inequality}. Due to the $k$-dependency of the interaction potential $W_{\rm cell}^{(k)}$, it is a priori not clear how to construct a global recovery sequence $(y^{(k)})$ that not only works for a specific subsequence. We resolve this difficulty by establishing a localization of cracks on the atomic length scale, which appears to be of some independent interest. More precisely, we argue that an approximative minimizing sequence $(\mathscr{y}^{(k)})$ for $\ph$ can be chosen with cracks confined to a fixed number of atomic slices (Lemma \ref{rigCrack}), which lets us transfer $\mathscr{y}^{(k)}$ to a lattice with different interatomic distances (Proposition \ref{arbAtomDist}) and thus define $(y^{(k)})$ for every $k\in\N$. $\Gamma$-convergence problems involving brittle fracture often have to deal with pieces of the deformed body escaping to $\infty$. As our limiting theory is one-dimensional we can sidestep working on $GSBV$-type spaces and instead obtain a limiting functional on piecewise $H^2$ functions. By an explicit construction using assumption \tc{(W9)} in Lemma \ref{LinftyBdd} we show that $L^\infty$ (or weaker) bounds could be imposed energetically so as to ensure matching compactness properties of low-energy sequences.

After describing our discrete model in Section 2, we prove a compactness theorem for sequences of bounded energy in Section 3. The lower bound in the $\Gamma$-convergence result from Section 4 is shown in Section 5 and then followed in Section 6 by an analysis of the cell formula and the construction of recovery sequences for Theorem \ref{Gamma}(ii). Section 7 provides examples of interatomic potentials to which our approach applies. In Section 8, we show that for full cracks and a class of mass-spring models there is an explicit expression for the cell formula. Moreover, it is proved that in such models, the energy needed to produce a full crack is strictly greater than the energy of a mere kink. The last short discussion section gives some hints on possible future research. 

\textbf{Notation.} We write $\mathrm{dist}(B_1,B_2):=\inf\{|x^{(1)}-x^{(2)}|;\;x^{(1)}\in B_1,\; x^{(2)}\in B_2\}$ for $B_1,B_2\subset\R^3$. Whenever the symbol $\pm$ appears in an equation, we mean that the equation holds both in the version with $+$ in all occurrences \textit{ and } in the version with $-$. The letter $C$ denotes a positive generic constant, whose value may be different in different instances. One-sided limits are written as $f(\sigma\pm)=\lim_{x\goto\sigma\pm} f(x)$. Further, $\R_{\rm skew}^{3\times 3}=\{A\in\R^{3\times 3};\;A=-\T{A}\}$. The symbol $A_{\cdot i}$ denotes the $i$-th column of a matrix $A$ and $\mathcal{H}^n$ is the $n$-dimensional Hausdorff measure. The restriction $\mu\mres K$ of a measure $\mu$ to the measurable set $K$ is defined by $\mu\mres K(U)=\mu(U\cap K)$.

\section{Model assumptions and preliminaries}\label{sec:pas}

\subsection{Atomic lattice and discrete gradients}
In our particle interaction model, $\Lambda_k=([0,L]\times \frac{1}{k}\bar{S})\cap\frac{1}{k}\mathbb{Z}^3$, $k\in\N$, is a cubic atomic lattice -- the reference configuration of a thin rod of length $L>0$. The interatomic distance $1/k$ is directly proportional to the thickness of the rod.

The rod's cross section is represented with a bounded domain $\emptyset\neq S\subset\R^2$. We assume that there is a set $\mathcal{L}'\subset(\frac{1}{2}+\Z)^2$ such that
$$S=\mathrm{Int}\,\bigcup_{\xb'\in\mathcal{L}'}\bigl(\xb'+\Bigl[-\frac{1}{2},\frac{1}{2}\Bigr]^2\bigr).$$
Moreover, should it happen that $\xb'+\{-\frac{1}{2},\frac{1}{2}\}\subset\mathcal{L}:=\bar{S}\cap \Z^2$, it is assumed that $\xb'\in\mathcal{L}'$. The symbol $\Lambda_k'$ is used for the lattice of midpoints of open lattice cubes with sidelength $1/k$ and corners in $\Lambda_k$.

Our lattice $\Lambda_k$ undergoes a static deformation $y^{(k)}\colon \Lambda_k\goto\R^3$. The main aim of this paper is to investigate the asymptotic behaviour as $k$ becomes large and to establish an effective continuum model as $k\goto+\infty$.

Sometimes it will be advantageous to work with a rescaled lattice that has unit distances between neighbouring atoms. The points of this lattice are written with hats over their coordinates, i.e.\ if $x=(x_1, x_2, x_3)\in\Lambda_k$ we introduce $\hat{x}_1:=kx_1$, $\hat{x}'=(\hat{x}_2,\hat{x}_3):=kx'=k(x_2,x_3)$ and $\hat{y}^{(k)}(\hat{x}_1,\hat{x}_2,\hat{x}_3):=k y^{(k)}(\frac{1}{k}\hat{x}_1,\frac{1}{k}\hat{x}')$ so that $\hat{y}^{(k)}\colon k\Lambda_k\to \R^3$. Then $\hat{\Lambda}_k$ and $\hat{\Lambda}'_k$ denote the sets of all $\hat{x}=(\hat{x}_1,\hat{x}_2,\hat{x}_3)$ such that the corresponding downscaled points $x$ are elements of the sets $\Lambda_k$ and $\Lambda_k'$, respectively. We will frequently use these eight direction vectors $\zf^1,\dots,\zf^8$:

\begin{center}
\begin{tabular}{l l}
$\zf^1 = \frac{1}{2}\T{(-1,-1,-1)}$, & $\zf^5 = \frac{1}{2}\T{(+1,-1,-1)},$\\
$\zf^2 = \frac{1}{2}\T{(-1,-1,+1)}$, & $\zf^6 = \frac{1}{2}\T{(+1,-1,+1)},$\\
$\zf^3 = \frac{1}{2}\T{(-1,+1,+1)}$, & $\zf^7 = \frac{1}{2}\T{(+1,+1,+1)},$\\
$\zf^4 = \frac{1}{2}\T{(-1,+1,-1)}$, & $\zf^8 = \frac{1}{2}\T{(+1,+1,-1)}.$
\end{tabular}
\end{center}

With these vectors we can describe the deformation of a unit cell $\hat{x}+\{-\frac{1}{2},\frac{1}{2}\}^3$ centred at $\hat{x}\in\hat{\Lambda}'_k$ -- let $\vec{y}^{\,(k)}(\hat{x})=(\hat{y}^{(k)}(\hat{x}+\zf^1)|\cdots|\hat{y}^{(k)}(\hat{x}+\zf^8))\in\R^{3\times 8}$. Further we introduce $\langle \hat{y}^{(k)}(\hat{x})\rangle=\frac{1}{8}\sum_{i=1}^8 \hat{y}^{(k)}(\hat{x}+\zf^i)$ and the discrete gradient $\bar{\nabla}\hat{y}^{(k)}(\hat{x})=(\hat{y}^{(k)}(\hat{x}+\zf^1)-\langle \hat{y}^{(k)}(\hat{x})\rangle|\cdots|\hat{y}^{(k)}(\hat{x}+\zf^8)-\langle \hat{y}^{(k)}(\hat{x})\rangle)\in\R^{3\times 8}$. A discrete gradient has the sum of columns equal to $0$ and an important special case is the matrix $\bar{\Id}:=(\zf^1|\cdots|\zf^8)\in\R^{3\times 8}$, which satisfies $\bar{\Id}=\bar{\nabla}\mathrm{id}$. Further we define two noteworthy subsets of $\R^{3\times 8}$, later used for characterizing rigid motions:
$$\bar{\rm SO}(3):=\{R\,\bar{\Id};\;R\in\mathrm{SO}(3)\},\quad V_0:=\{(c|\cdots|c)\in\R^{3\times 8};\;c\in\R^3\}.$$

\subsection{Rescaling, interpolation and extension of deformations}\label{sec:rie}
To handle sequences of deformations defined on a common domain $\O=(0,L)\times S$, we set $\ybk(\xb_1,\xb_2,\xb_3):=y^{(k)}(\xb_1,\frac{1}{k}\xb')$ for $(x_1,\frac{1}{k}\xb')\in\Lambda_k$ and interpolate $\ybk$ as follows so that it is defined even outside lattice points.

Write $\zfb^i:=(\frac{1}{k}\zf_1^i,\zf_2^i,\zf_3^i)$ and $\Qb(\bar{\xb})=\bar{\xb}+(-\frac{1}{2k},\frac{1}{2k})\times (-\frac{1}{2},\frac{1}{2})^2$ for $\bar{\xb}\in\tilde{\Lambda}_k'=\{\xi\in\O;\;(k\xi_1,\xi')\in\hat{\Lambda}_k'\}$. First, we set $\ybk(\bar{\xb}):=\frac{1}{8}\sum_{i=1}^8\ybk(\bar{\xb}+\zfb^i)$ and for each face $\tilde{F}$ of the block $\Qb(\bar{\xb})$ and the corresponding centre $\xb_{\tilde{F}}$ of the face $\tilde{F}$, define $\ybk(\xb_{\tilde{F}}):=\frac{1}{4}\sum_j\ybk(\bar{\xb}+\zfb^j)$, where the sum runs over all $j$ such that $\bar{\xb}+\zfb^j$ is a corner of $\tilde{F}$. Now we interpolate $\ybk$ in an affine way on every simplex $\tilde{T}=\mathrm{conv}\{\bar{\xb},\bar{\xb}+\zfb^i,\bar{\xb}+\zfb^j,\xb_{\tilde{F}}\}$, where $|\zf^i-\zf^j|=1$ and $\bar{\xb}+\zfb^i,\bar{\xb}+\zfb^j\in\tilde{F}$ (there are 24 simplices within $\Qb(\bar{\xb})$). Like this, $\ybk$ is differentiable almost everywhere, so we can define $\nabla_k\ybk:=\bigl(\frac{\partial \ybk}{\partial \xb_1}\,|\,k\frac{\partial \ybk}{\partial \xb_2}\,|\,k\frac{\partial \ybk}{\partial \xb_3}\bigr)$. For any face $\tilde{F}$ of $\Qb(\bar{\xb})$ with face centre $\xb_{\tilde{F}}$, the piecewise affine interpolation satisfies
\begin{equation}\label{eq:surfVolMean}
\ybk(\xb_{\tilde{F}})=\dashint_{\tilde{F}}\,\ybk\md \mathcal{H}^2\text{ and }\ybk(\bar{\xb})=\dashint_{\Qb(\bar{\xb})}\ybk(\xi)\md\xi.
\end{equation}

We also set $\bar{\nabla}_k\ybk(\bar{\xb}):=k(\ybk(\bar{\xb}_1+\frac{1}{k}\zf_1^i,\bar{\xb}'+(\zf^i)')-\sum_{j=1}^8\ybk(\bar{\xb}_1+\frac{1}{k}\zf_1^j,\bar{\xb}'+(\zf^j)'))_{i=1}^8$.

For the following reasons we now extend deformations to certain auxiliary surface lattices: 
\begin{itemize}
	\item surface energy needs to be modelled;
	\item in part we would like to apply $\Gamma$-convergence results from \cite{elRods};
	\item a fixed domain on which the convergence of $(\ybk)$ is formulated sometimes does not match with its inscribed crystalline lattice (specifically in the $\xb_1$-direction). 
\end{itemize}
We present here the necessary tools, without too much emphasis on this technical issue later, referring to \cite[Subsection~2.3]{elRods} for more details and a proof, adapted from \cite{BS09}. Consider a portion $(a,b)\times S\subset (0,L)\times S$ of the rod. Let $a_k=\frac{1}{k}\lceil ka\rceil$, $b_k=\frac{1}{k}\lfloor kb\rfloor$, and
\begin{align*}
\mathcal{L}^{\rm ext}
&=\mathcal{L}+\{-1,0,1\}^2,
& 
\Lkex
&=\{a_k-\tfrac{1}{k},a_k,\ldots,b_k+\tfrac{1}{k}\}\times \tfrac{1}{k}\mathcal{L}^{\rm ext},
\\
\mathcal{L}'^{,\rm ext}
&=\mathcal{L}'+\{-1,0,1\}^2,
&
\Lkexc
&=\{a_k-\tfrac{1}{2k},a_k+\tfrac{1}{2k},\ldots,b_k+\tfrac{1}{2k}\}\times \tfrac{1}{k}\mathcal{L}'^{,\rm ext},\\ 
S^{\rm ext}
&=S+(-1,1)^2,
&
\Omega^{\rm ext}_k
&= (a_k-\tfrac{1}{k}, b_k+\tfrac{1}{k}) \times S^{\rm ext},\\
\Lkexb &=\{a_k-\tfrac{1}{k},a_k,\ldots,b_k+\tfrac{1}{k}\}\times\mathcal{L}^{\rm ext},&\Lkexcb &=\{a_k-\tfrac{1}{2k},a_k+\tfrac{1}{2k},\ldots,b_k+\tfrac{1}{2k}\}\times \mathcal{L}'^{,\rm ext}.
\end{align*}
\begin{lemma}\label{lemma:ext-general}
There are extensions $y^{(k)}\colon \Lkex \to \R^3$ such that their interpolations $\ybk$ satisfy
\begin{align*}
\esssup_{\O^{\rm ext}_k} \,\mathrm{dist}^2(\nabla_k \ybk,\mathrm{SO}(3)) 
&\leq C \esssup_{(a_k,b_k)\times S} \,\mathrm{dist}^2(\nabla_k \ybk,\mathrm{SO}(3)) 
\shortintertext{and}
\int_{\O^{\rm ext}_k} \mathrm{dist}^2(\nabla_k \ybk,\mathrm{SO}(3)) \md x 
&\leq C \int_{(a_k,b_k)\times S} \mathrm{dist}^2(\nabla_k \ybk,\mathrm{SO}(3)) \md x. 
\end{align*}
\end{lemma}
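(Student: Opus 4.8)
I would obtain the extension by reflecting the discrete deformation across boundary lattice hyperplanes, following closely \cite[Subsection~2.3]{elRods} (adapted there from \cite{BS09}). The mechanism is that a single reflection extends a lattice deformation by one slice in the $x_1$-direction, or by one layer of cells in a cross-sectional direction, while preserving the pointwise distance of the interpolated discrete gradient to $\mathrm{SO}(3)$ \emph{exactly}. Concretely, fix a coordinate hyperplane $H=\{x_j=c\}$ with $c$ in the relevant (half-)integer grid, let $P$ be the Euclidean reflection in $H$, and let $R\in\mathrm{O}(3)$ be the (symmetric) linear reflection fixing the plane through $0$ parallel to $H$. On a grid region $A$ already carrying $y^{(k)}$, with $PA$ covering the cells to be added, I would set $y^{(k)}(x):=R\,y^{(k)}(Px)$ on the new cells; this reproduces the old values where both are defined, so one genuinely gets an extension $y^{(k)}\colon\Lkex\to\R^3$. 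Since $P$ maps the $24$-simplex mesh to itself, sending corner nodes to corner nodes and face centres to face centres, the piecewise affine interpolation of the extended lattice function on a new block is the $P$-reflected, $R$-rotated copy of $\ybk$ on the corresponding block of $PA$; in particular the interpolations match continuously across the interface and $\nabla_k\ybk(\xi)=R\,\nabla_k\ybk(P\xi)\,R$ for a.e.\ $\xi$ in the new region, the anisotropic scaling $\nabla_k=(\partial_1,k\partial_2,k\partial_3)$ being harmless because $P$ is a coordinate reflection. As the map $Q\mapsto RQR$ is a bijection of $\mathrm{SO}(3)$ onto itself and the Frobenius norm is bi-invariant under orthogonal matrices,
\[
\mathrm{dist}\bigl(\nabla_k\ybk(\xi),\mathrm{SO}(3)\bigr)=\mathrm{dist}\bigl(\nabla_k\ybk(P\xi),\mathrm{SO}(3)\bigr)\qquad\text{for a.e.\ }\xi\text{ in the new region.}
\]
Reflecting the target together with the domain is \emph{essential} here: reflecting only the domain would replace $\mathrm{SO}(3)$ on the right-hand side by $\mathrm{O}(3)\setminus\mathrm{SO}(3)$.

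With this tool I would first add the two end slices (one reflection across $x_1=a_k$, one across $x_1=b_k$), then build the one-cell cross-sectional collar $\tfrac1k\mathcal{L}^{\rm ext}$ by reflections across the $x_2$- and $x_3$-boundary planes of $S$; edge and corner cells of $\mathcal{L}^{\rm ext}\setminus\mathcal{L}$, and the three-dimensional corner regions of $\O^{\rm ext}_k$, are then filled by composites of two or three such reflections (for which the same identity $\nabla_k\ybk(\xi)=R\,\nabla_k\ybk(P\xi)\,R$ holds with $R$, $P$ the composed reflections). Because the collar is one cell thick and only boundedly many reflections are used, every new atom receives a value exactly once and every cell of $(a_k,b_k)\times S$ is the reflected preimage of at most a $k$-independent number of new cells. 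The displayed identity then yields both estimates at once: the essential supremum of $\mathrm{dist}^2(\nabla_k\ybk,\mathrm{SO}(3))$ over $\O^{\rm ext}_k$ cannot exceed the one over $(a_k,b_k)\times S$ (so $C=1$ works in the first inequality), and summing the identity over the new cells gives $\int_{\O^{\rm ext}_k}\mathrm{dist}^2(\nabla_k\ybk,\mathrm{SO}(3))\le C\int_{(a_k,b_k)\times S}\mathrm{dist}^2(\nabla_k\ybk,\mathrm{SO}(3))$ with $C$ the multiplicity bound.

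The one point that genuinely needs care — and where I would invoke \cite[Subsection~2.3]{elRods} rather than reproduce the bookkeeping — is verifying that the chosen composites of coordinate reflections are compatible with the conforming simplicial mesh at edges and corners (so that continuity and the a.e.\ gradient identity survive) and that the cross-sectional hypothesis ``$\xb'+\{-\tfrac12,\tfrac12\}\subset\mathcal{L}\Rightarrow\xb'\in\mathcal{L}'$'' makes the whole collar $\mathcal{L}^{\rm ext}$ reachable this way with uniformly bounded overlap. After rescaling to the unit lattice nothing in the construction depends on $k$, which is exactly what keeps $C$ independent of $k$.
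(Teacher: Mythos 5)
The paper itself does not prove this lemma: it explicitly defers to \cite[Subsection~2.3]{elRods} (adapted from \cite{BS09}), so there is no in-paper argument to compare against. What can be assessed is whether the odd-reflection mechanism you propose is sound, and in the form you wrote it, it is not.

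The rule $y^{(k)}(x):=R\,y^{(k)}(Px)$ is inconsistent on the reflecting hyperplane $H$ itself: for $x\in H$ one has $Px=x$, so the formula would require $y^{(k)}(x)=R\,y^{(k)}(x)$, i.e.\ that $y^{(k)}(x)$ lie in the mirror plane of $R$ — false for a generic deformation. In the present geometry the interface is always populated with atoms. In the $x_1$-direction you reflect across $x_1=a_k$ or $x_1=b_k$, which are lattice hyperplanes; and in the cross-sectional directions $\partial S$ lies at integer coordinates (the cells have half-integer midpoints $\mathcal{L}'\subset(\tfrac12+\Z)^2$ and hence integer corners), exactly where $\mathcal{L}=\bar S\cap\Z^2$ has its boundary atoms. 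Your claim ``this reproduces the old values where both are defined'' is therefore wrong. If, alternatively, you leave the interface atoms untouched and only set new values off $H$, then the corners of the first layer of new cells no longer satisfy the reflection relation, the piecewise-affine interpolation on those cells is not the $P$-precomposed, $R$-conjugated copy of any old cell, and the crucial identity $\nabla_k\ybk(\xi)=R\,\nabla_k\ybk(P\xi)\,R$ fails on the cells touching $H$. A one-dimensional toy case already shows it: with $P(x)=-x$, $R=-1$, the only consistent interface value is $y(0)=0$, and otherwise the difference quotient on the new interval $[-\tfrac1k,0]$ is $k\bigl(y(0)+y(\tfrac1k)\bigr)$, unrelated to $k\bigl(y(\tfrac1k)-y(0)\bigr)$.

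This gap is not cosmetic, since your whole argument (including the ``$C=1$'' conclusion for the sup bound) rests on that exact pointwise identity. It is also at odds with how the paper subsequently uses the extension: inequality \eqref{eq:nbhd-rig-est} controls the discrete gradient at an extended cell only by a \emph{sum} over a bounded neighbourhood of original cells with a constant $\hat C_e\ge 1$, which is the signature of a local rigid/affine continuation (take a nearby already-defined atom $x^\circ$, a rotation $R_{\rm loc}$ read off from an adjacent cell's discrete gradient, and set $y^{(k)}(x):=y^{(k)}(x^\circ)+R_{\rm loc}(x-x^\circ)$), not of a reflection. To salvage a reflection-type argument you would have to conjugate by a deformation-dependent isometry that fixes the interface slice — but that $R$ is no longer a fixed linear reflection, and one is then effectively back to the rigid-continuation construction anyway.
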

For $\xb\in \overline{\O_k^{\rm ext}}$, we denote by $\bar{\xb}$ an element of $\Lkexcb$ that is closest to $\xb$. In what follows we always understand the symbols $\Lkex$, $\Lkexc$ etc. with $a:=0$ and $b:=L$, unless stated otherwise. We also set $\Os:=(0,L)\times S^{\rm ext}$.

\subsection{Energy}\label{sec:E}
Let $L_k=\frac{1}{k}\lfloor kL\rfloor$, $\hat{\Lambda}_k'^{,\mathrm{surf}} = \{\frac{1}{2},\ldots,kL_k-\frac{1}{2}\}\times (\mathcal{L}'^{,\rm ext} \setminus \mathcal{L}')$, and $\hat{\Lambda}_k'^{,\mathrm{end}} = \{-\frac{1}{2},kL_k+\frac{1}{2}\} \times \mathcal{L}'^{,\rm ext}$. We give this definition of strain energy $E^{(k)}$:
\begin{equation}\label{eq:Ek}
\begin{aligned}
E^{(k)}(y^{(k)})=&\sum_{\hat{x}\in\hat{\Lambda}_k'} W_{\rm cell}^{(k)}\left(\vec{y}^{\,(k)}(\hat{x})\right)+\sum_{\hat{x}\in\hat{\Lambda}_k'^{,\mathrm{surf}}} W_{\rm surf}^{(k)}\left(\hat{x}',\vec{y}^{\,(k)}(\hat{x})\right)\\
&+\sum_{\hat{x}\in\hat{\Lambda}_k'^{,\mathrm{end}}} W_{\rm end}^{(k)}\left(\frac{1}{k}\hat{x}_1,\hat{x}',\vec{y}^{\,(k)}(\hat{x})\right)
\end{aligned}
\end{equation}
with $W_{\rm cell}^{(k)}\colon \R^{3\times 8}\to [0,\infty]$, $W_{\rm surf}^{(k)}\colon (\mathcal{L}'^{,\rm ext}\setminus\mathcal{L}')\times\R^{3\times 8}\goto [0,\infty]$ and $W_{\rm end}^{(k)}\colon \{-\frac{1}{2k},L_k+\frac{1}{2k}\}\times\mathcal{L}'^{,\rm ext}\times\R^{3\times 8}\goto [0,\infty]$. The terms with $W_{\rm surf}^{(k)}$ and $W_{\rm end}^{(k)}$ are useful for incorporating surface energy (see \cite{elRods} for further clarification). For convenience we assume that for every $\vec{y}\in\R^{3\times 8}$, $W_{\rm surf}^{(k)}(\cdot,\vec{y})$ is extended to a piecewise constant function on $S^{\rm ext}\setminus \bar{S}$ which is equal to $W_{\rm surf}^{(k)}(\hat{x}',\vec{y})$ on $\hat{x}'+(-\frac{1}{2},\frac{1}{2})^2$. Sometimes it will be \tc{useful} to group the terms, so for $\vec{y}\in\R^{3\times 8}$ we set
\begin{equation*}
\tc{W_{\rm tot}^{(k)}(\hat{x}',\vec{y})}=\begin{cases}
W_{\rm cell}^{(k)}(\vec{y}) & \tc{\hat{x}'\in \bar{S}},\\
W_{\rm surf}^{(k)}(\hat{x}',\vec{y}) & \hat{x}'\in (S^{\rm ext}\setminus \bar{S}).
\end{cases}
\end{equation*}
In our $\Gamma$-convergence statement, we consider the rescaled energy $\frac{1/k^3}{1/k^4}E^{(k)}=kE^{(k)}$, where $k^3$ is the order of the number of particles per unit volume in a bulk system and $1/k^4$ is the appropriate power of a rod's thickness for studying the \textit{bending/torsion energy regime} (see e.g. \cite{MMh6} for more context).

\noindent\textbf{Assumptions} on the cell energy functions $W_{\rm cell}^{(k)}$, $W_{\rm surf}^{(k)}$, and $W_{\rm end}^{(k)}$. 

Hereafter $\mathscr{W}^{(k)}$ stands for $W_{\rm cell}^{(k)}$, $W_{\rm surf}^{(k)}(\hat{x}',\cdot)$ with $\hat{x}'\in \mathcal{L}'^{,\rm ext} \setminus \mathcal{L}'$, and for $W_{\rm end}^{(k)}(\tg{\frac{1}{k}\hat{x}_1},\hat{x}',\cdot)$ with $\hat{x}\in \hat{\Lambda}_k'^{,\mathrm{end}}$.
\begin{enumerate}
\item[(W1)] Frame-indifference: $\mathscr{W}^{(k)}(R\vec{y}+(c|\cdots|c))=\mathscr{W}^{(k)}(\vec{y})$ for all $R\in {\rm SO}(3)$, $\vec{y}\in \R^{3\times 8}$, $c\in \R^3$, and $k\in\N$.
\item[(W2)] Energy well: For every $k\in\N$, $\mathscr{W}^{(k)}$ attains a minimum (equal to 0) at rigid deformations, i.e.\ deformations $\vec{y}=(\hat{y}_1|\cdots|\hat{y}_8)$ with $\hat{y}_i=R\zf^i+c$ for all $i\in\{1,\dots,8\}$ and some $R\in\mathrm{SO}(3)$, $c\in\R^3$.
\item[(W3)] Independence of $k$ in the elastic regime: There are parameters $c_{\rm frac}^{(k)}\searrow 0$ such that $\lim_{k\goto\infty}k(c_{\rm frac}^{(k)})^2\in(0,\infty)$ and an elastic stored energy function $W_0\colon\mathcal{L}'^{,\rm ext}\times\R^{3\times 8}\goto[0,\infty]$ such that we have $\forall\, k\in\N\;\forall\,\vec{y}\in\R^{3\times 8}\;\forall \tc{\xb'\in \mathcal{L}'^{,\rm ext}}$:
\begin{equation*}
W_{\rm tot}^{(k)}(\xb',\vec{y})=W_0(\xb',\vec{y})\quad\text{if }\;\mathrm{dist}(\bar{\nabla}\hat{y},\bar{\rm SO}(3))\leq c_{\rm frac}^{(k)}.
\end{equation*}
Further, there exists a $C>0$ independent of $k\in\N$ such that
\begin{equation*}
W_{\rm end}^{(k)}(\tfrac{1}{k}\hat{x}_1,\hat{x}',\vec{y})\leq C\mathrm{dist}^2(\bar{\nabla}\hat{y},\bar{\rm SO}(3))\quad\text{for any }\hat{x}\in \hat{\Lambda}_k'^{,\mathrm{end}},
\end{equation*}
$\vec{y}=(\hat{y}_1|\cdots|\hat{y}_8)\in\R^{3\times 8}$, and $\bar{\nabla}\hat{y}=\vec{y}-(\sum_{j=1}^8\hat{y}_j)(1,\dots,1)$ with $\mathrm{dist}(\bar{\nabla}\hat{y},\bar{\rm SO}(3))\leq c_{\rm frac}^{(k)}$.
\item[(W4)] Regularity in $k$:
$W_{\rm tot}^{(k+1)}(\xb',\vec{y})\geq \frac{k}{k+1}W_{\rm tot}^{(k)}(\xb',\vec{y})$ for all $ k\in\N\;\forall\,\vec{y}\in\R^{3\times 8}\;\forall \tc{\xb'\in \mathcal{L}'^{,\rm ext}}.$
\item[(W5)] Non-degeneracy in the elastic and the fracture regime: The function $W_0|_{{\mathcal{L}'\times\R^{3\times 8}}}$ is independent of $\xb'$ (hence we omit it from the notation in this region) and satisfies
\begin{align*}
W_0(\vec{y})\geq c_{\rm W}\mathrm{dist}^2(\bar{\nabla}\hat{y},\bar{\rm SO}(3))\quad\forall\,\vec{y}\in\R^{3\times 8}
\end{align*}
for a constant $c_{\rm W}>0$. Writing $W_{\rm cell}^{(k)}(\vec{y})=\bar{W}^{(k)}(\vec{y})$ if $\mathrm{dist}(\bar{\nabla}\hat{y},\bar{\rm SO}(3))> c_{\rm frac}^{(k)}$, we assume that the mappings $\bar{W}^{(k)}$ can be chosen such that
\begin{align*}
\bar{W}^{(k)}(\vec{y}) \geq \bar{c}_1^{(k)} \quad  \forall\, k\in\N\;\forall\,\vec{y}\in\R^{3\times 8}
\end{align*}
for a sequence $(\bar{c}_1^{(k)})_{k=1}^\infty$ of positive numbers with $\lim_{k\goto\infty}k\bar{c}_1^{(k)}\in (0,\infty)$. 
\item[(W6)] $\mathscr{W}^{(k)}$ is everywhere Borel measurable and $W_0(\hat{x}',\cdot)$, $\hat{x}'\in\mathcal{L}^{',\mathrm{ext}}$, is of class $\mathcal{C}^2$ in a neighbourhood of $\bar{\rm SO}(3)$.
\item[(W7)] If $i\in\{1,2,\dots,8\}$, $\hat{x}'\in\mathcal{L}'^{,\mathrm{ext}}\setminus\mathcal{L}'$, and $\vec{y}=(\hat{y}_1|\cdots|\hat{y}_8)$, then $\vec{y}\mapsto W_{\rm surf}^{(k)}(\hat{x}',\vec{y})$ may depend on $\hat{y}_i$ only if $\hat{x}'+(\zf^i)'\in\mathcal{L}$. If $\xb_1\in\{-\frac{1}{2k},L_k+\frac{1}{2k}\}$, then $\vec{y}\mapsto W_{\rm end}^{(k)}(\xb_1,\hat{x}',\vec{y})$ may depend on $\hat{y}_i$ only if $(\xb_1,\hat{x}')+\zfb^i\in\tilde{\Lambda}_k$.
\end{enumerate}
The quadratic form associated with $\nabla^2 W_{\rm surf}^{(k)}(\bar{\Id})$ is denoted by $Q_{\rm surf}$.

Throughout we will assume that Assumptions \tc{(W1)--(W7)} are satisfied. We also introduce conditions which imply that long-range interactions of atoms are bounded or even are negligible.
\begin{enumerate}
\item[(W8)] We say that inelastic interactions are \textit{bounded} if
\begin{align*}
\mathscr{W}^{(k)}(\vec{y}) \leq \bar{C}_1^{(k)} \quad  \forall\, k\in\N\;\forall\,\vec{y}\in\R^{3\times 8}
\end{align*}
for a sequence $(\bar{C}_1^{(k)})_{k=1}^\infty$ of positive numbers with $\lim_{k\goto\infty}k\bar{C}_1^{(k)}\in (0,\infty)$. 
\item[(W9)] We say that the cell energies have \textit{maximum interaction range} scaling with $(M_k)_{k=1}^{\infty}$, where $M_k\to0$, $M_kk \to \infty$, if the following holds true: If there is a partition $\{1,\ldots,8\}=J_1\dot{\cup}J_2\dot{\cup}\cdots\dot{\cup}J_{n_{\rm C}}$  such that for some $\vec{y},{\vec{y}}\,'\in\R^{3\times 8}$ one has 
\begin{equation*}
 \min_{1\leq \ell<m\leq n_{\rm C}}\mathrm{dist}(\{\hat{y}_{i_\ell}\}_{i_\ell\in J_\ell},\{\hat{y}_{i_m}\}_{i_m\in J_m})\geq M_kk 
 \quad\text{and}\quad 
 \min_{1\leq \ell<m\leq n_{\rm C}}\mathrm{dist}(\{\hat{y}'_{i_\ell}\}_{i_\ell\in J_\ell},\{\hat{y}'_{i_m}\}_{i_m\in J_m})\geq M_kk
\end{equation*}
and there are rigid motions given by $R_m\in \mathrm{SO}(3)$ and $c_m\in\R^3$ such that 
\begin{equation*}
 \hat{y}'_{i_m} = R_m\hat{y}_{i_m} + c_m 
 \quad \forall\, i_m\in J_m,\; m=1,\ldots, n_{\rm C},  
\end{equation*}
then 
\begin{equation*}
 |\mathscr{W}^{(k)}({\vec{y}}\,') - \mathscr{W}^{(k)}(\vec{y})| \le \frac{\tc{C_{\rm far}}}{M_k\tc{k^2}}
\end{equation*}
for a uniform constant $C_{\rm far}>0$.
\end{enumerate}

\begin{rem}
We remark that the assumption in \tc{(W4)} is a monotonicity assumption only for $kW_{\rm tot}^{(k)}(\xb',\cdot)$ but not for $W_{\rm tot}^{(k)}(\xb',\cdot)$ itself. It is in line with our assuming that the elastic energy is independent of $k$ in \tc{(W3)} and the fracture toughness scales with $\frac{1}{k}$, cf.\ \tc{(W5)}. 
\end{rem}

\begin{rem}
 By (W2), \tc{(W3)}, and \tc{(W6)} we have 
 \begin{align*}
\mathscr{W}^{(k)}(\vec{y}) \le c_{\rm w}\mathrm{dist}^2(\bar{\nabla}\hat{y},\bar{\rm SO}(3)) 
\end{align*}
for a constant $c_{\rm w}$ and all $\vec{y}\in\R^{3\times 8}$ such that $\mathrm{dist}(\bar{\nabla}\hat{y},\bar{\rm SO}(3))\leq c_{\rm frac}^{(k)}$. Moreover, by (W2), \tc{(W5)} and \tc{(W6)} the quadratic form $Q_3$ associated with $\nabla^2 W_0(\bar{\Id})$, is positive definite on $\mathrm{span}\{V_0\cup\R_{\rm skew}^{3\times 3}\bar{\Id}\}^\bot$.
\end{rem}

\subsection{Piecewise Sobolev functions}

We work with the linear spaces $\PHgen{m}{\ell}$, $m=1,2$, \tc{$\ell\in\N$}, of functions that are piecewise Sobolev in the following sense:
\begin{align}
\PHgen{m}{\ell}
&:=\bigl\{\yb\in L^1((0,L);\R^\ell);\; 
\exists\text{ partition } (\sigma^i)_{i=0}^{n+1} \text{ of }[0,L]\nonumber \\ 
&\qquad\qquad\forall i\in\{1,2,\dots,n+1\}\colon\yb|_{(\sigma_{i-1},\sigma_i)}\in H^m((\sigma_{i-1},\sigma_i);\R^\ell)\bigr\}.\label{eq:yPcwHm}
\end{align}
Here we say that $(\sigma^i)_{i=0}^{n+1}$ is a partition of $[0,L]$ if $0=\sigma^0 < \sigma^1<\cdots< \sigma^{n+1}=L$. Suppose $\yb\in\PHgen{m}{\ell}$ and $\{\sigma^i\}_{i=0}^{n+1}$ is the minimal set with property \eqref{eq:yPcwHm}. For $m=1$ one has 
\begin{align*}
S_{\yb}
:=\{\sigma\in(0,L);\;\yb(\sigma-)\ne\yb(\sigma+)\} 
=\{\sigma^i\}_{i=0}^{n+1}.
\end{align*}
For $m=2$ we have 
\begin{align*}
S_{\yb'}:=\{\sigma\in\{\sigma^i\}_{i=1}^{n};\;\yb(\sigma-)=\yb(\sigma+)\},\qquad 
S_{\yb}:=\{\sigma^i\}_{i=1}^{n}\setminus S_{\yb'},
\end{align*}
where the set $S_{\yb}$ is the \textit{jump set} of $\yb$ and $S_{\yb'}$ the jump set of the derivative $\pl_{\xb_1}\yb$.

\section{Compactness}

\begin{thm}\label{comp}
Suppose the sequence $(y^{(k)})_{k=1}^\infty$ of lattice deformations fulfils
\begin{equation}\label{eq:limsEfin}
\limsup_{k\goto\infty}\bigl(kE^{(k)}(y^{(k)})+\tg{|| y^{(k)}||_{\ell^\infty(\Lambda_k;\R^3)}\bigr)}<+\infty
\end{equation}
Then after applying the extension scheme from Subsection \ref{sec:rie} we can find an increasing sequence $(k_j)_{j=1}^\infty\subset\N$, functions $\yb\in \PH{2}$, $d_2,d_3\in \PH{1}$ with $R=(\pl_{\xb_1}\yb|d_2|d_3)\in\mathrm{SO}(3)$ a.e., and a partition $(\sigma^i)_{i=0}^{\bar{n}_{\rm f}+1}$ of $[0,L]$ such that for any $\eta\in(0,\frac{1}{2}\min_{0\leq i\leq \bar{n}_f} |\sigma^{i+1}-\sigma^i|)$ and every $0\leq i\leq \bar{n}_{\rm f}$ we have:
\begin{enumerate}[(i)]
\item $\yb^{(k_j)}\goto \yb$ in $L^2(\O^{\rm ext};\R^{3\times 3})$; 
\item $\nabla_{k_j}\yb^{(k_j)}\goto R=(\pl_{\xb_1}\yb|d_2|d_3)$ in $L^2((\sigma^i+\eta,\sigma^{i+1}-\eta)\times S^{\rm ext};\R^{3\times 3})$;
\item $\tc{\mathrm{dist}(\bar{\nabla}_{k_j}\yb^{(k_j)},\bar{\mathrm{SO}}(3))} \le c_{\rm frac}^{(k)}$ on $(\sigma^i+\eta,\sigma^{i+1}-\eta)\times S^{\rm ext}$, for $j$ sufficiently large;
\item if we define the measures $\mu_k$ on $[0,L]$ by
$$\mu_k(A) = \sum_{\substack{\hat{x}\in\hat{\Lambda}_k'^{,\mathrm{ext}},\\ \hat{x}_1\in kA}}kW_{\rm tot}^{(k)}\bigl(\hat{x}',\vec{y}^{\,(k)}(\hat{x})\bigr),$$
for Borel sets $A$, then $\mu_{k_j}\wto^*\mu$ for a Radon measure $\mu$.
\end{enumerate}
\end{thm}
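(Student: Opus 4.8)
The plan is to extract the limiting one-dimensional deformation and rotation field via the rigidity-based machinery already available for the purely elastic problem, then to localize the fracture set by a counting argument, and finally to obtain the limiting measure $\mu$ by weak-* compactness. First I would set $e_k:=kE^{(k)}(y^{(k)})$ and note that $\limsup_k e_k<\infty$ by \eqref{eq:limsEfin}. By assumption (W5), every cell $\hat{x}\in\hat\Lambda'_k$ at which $\mathrm{dist}(\bar\nabla\hat{y}^{(k)}(\hat{x}),\bar{\mathrm{SO}}(3))>c_{\mathrm{frac}}^{(k)}$ contributes at least $\bar{c}_1^{(k)}$ to $E^{(k)}$, hence at least $k\bar{c}_1^{(k)}$ to $e_k$; since $\liminf_k k\bar{c}_1^{(k)}>0$, the number of such "broken" cells in each $\hat{x}_1$-slice is bounded uniformly in $k$ along the slices, and in fact the total number of slices containing a broken cell is bounded uniformly in $k$. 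This yields a uniform bound $\bar{n}_{\mathrm{f}}$ on the number of fracture slices. Passing to a subsequence $(k_j)$, the (rescaled) $\hat{x}_1$-positions of these broken slices converge to finitely many points $\sigma^1<\cdots<\sigma^{\bar{n}_{\mathrm f}}$ in $[0,L]$, and I would append $\sigma^0=0$, $\sigma^{\bar n_{\mathrm f}+1}=L$ to obtain the partition.

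On each compactly contained subinterval $(\sigma^i+\eta,\sigma^{i+1}-\eta)$, for $j$ large all cells are "elastic," i.e.\ $\mathrm{dist}(\bar\nabla_{k_j}\yb^{(k_j)},\bar{\mathrm{SO}}(3))\le c_{\mathrm{frac}}^{(k_j)}$ (this is claim (iii)), so on the corresponding portion of $\Omega^{\mathrm{ext}}$ the energy is the purely elastic energy $W_0$ with the quadratic lower bound from (W5)--(W6). After the extension of Lemma \ref{lemma:ext-general}, the elastic estimate $\esssup\,\mathrm{dist}^2(\nabla_{k_j}\yb^{(k_j)},\mathrm{SO}(3))\le C e_{k_j}/k_j\to0$ together with the geometric rigidity/compactness results for the bending-torsion regime from \cite{elRods} (applied interval-by-interval, using that $\|y^{(k)}\|_{\ell^\infty}$ is bounded to pin down translations) gives, along a further subsequence, $\yb^{(k_j)}\to\yb$ in $L^2$ with $\yb|_{(\sigma^i,\sigma^{i+1})}\in H^2$, and $\nabla_{k_j}\yb^{(k_j)}\to R=(\pl_{\xb_1}\yb|d_2|d_3)\in\mathrm{SO}(3)$ in $L^2((\sigma^i+\eta,\sigma^{i+1}-\eta)\times S^{\mathrm{ext}})$, with $d_2,d_3|_{(\sigma^i,\sigma^{i+1})}\in H^1$; a diagonal argument over $\eta\downarrow0$ and over $i$ yields the global statements (i)--(ii) and membership $\yb\in\PH{2}$, $d_2,d_3\in\PH{1}$. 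Here one must check that the finitely many interface points where compactness is lost are exactly (a subset of) the $\sigma^i$, which is guaranteed by construction since away from broken slices the deformation is elastic.

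Finally, for (iv) the measures $\mu_k$ are non-negative and $\mu_k([0,L])=kE^{(k)}(y^{(k)})+o(1)=e_k+o(1)$ is bounded by \eqref{eq:limsEfin} (up to harmless boundary corrections from the difference between $\hat\Lambda'^{,\mathrm{ext}}_k$ and $\hat\Lambda'_k$, controlled by (W3) and (W5)); hence by weak-* compactness of bounded Radon measures on the compact interval $[0,L]$, a further subsequence satisfies $\mu_{k_j}\wto^*\mu$ for some finite non-negative Radon measure $\mu$. All the convergences can be arranged along one common subsequence by the usual diagonal extraction.

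I expect the main obstacle to be the localization step: making precise that "broken" cells occupy only a uniformly bounded number of $\hat{x}_1$-slices (so that $\bar n_{\mathrm f}$ is $k$-independent), and then showing that on the elastic subintervals one may genuinely invoke the $\Gamma$-compactness apparatus of \cite{elRods}---which was designed for crack-free rods---on each piece separately, uniformly gluing the piecewise rotations and translations. The interplay between the two length scales (the interatomic spacing $1/k$ and the macroscopic fracture locations $\sigma^i$), together with the need to control the extension near the newly created "free" ends of each elastic segment via Lemma \ref{lemma:ext-general}, is where the technical care concentrates.
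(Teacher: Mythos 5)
Your overall strategy coincides with the paper's: count the energetically expensive (``broken'') slices using (W5), extract a subsequence along which their number is constant and their positions converge, invoke the purely elastic compactness theory of \cite{elRods} on the intact subintervals, diagonalize over $\eta$, and then use weak-$*$ compactness for the measures. Two points need repair, however.

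First, you define a broken cell by the threshold $c_{\rm frac}^{(k)}$ directly. The paper instead declares a slice broken if some cell in it has $\mathrm{dist}(\bar{\nabla}\hat{y}^{(k)},\bar{\rm SO}(3))> c_{\rm frac}^{(k)}/(\sqrt{3\sharp\mathcal{L}'}\,\hat{C}_{\rm e})$, a strictly \emph{smaller} threshold, where $\hat{C}_{\rm e}$ is the constant from the extension estimate \eqref{eq:nbhd-rig-est}. This is what makes (iii) hold on the \emph{extended} cross-section $S^{\rm ext}$: if a slice and its two neighbours are unbroken in this stronger sense, then the extension satisfies $\mathrm{dist}(\bar{\nabla}_k\ybk(\xb),\bar{\rm SO}(3))\le c_{\rm frac}^{(k)}$ at every $\xb\in\Lkexcb$, including the surface cells in $S^{\rm ext}\setminus\bar S$. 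With your larger threshold the original cells are elastic but the extended cells need not be, so (iii) on $S^{\rm ext}$ — which you also implicitly use when you cite \cite[Theorem 2.4]{elRods} on the intact segments — would fail. Counting is then done against $\min\{c_W (c^{(k)}_{\rm frac})^2/(3\sharp\mathcal{L}'\hat{C}_{\rm e}^2),\,\bar c_1^{(k)}\}\ge c/k$, accounting for both the fracture regime (cost $\ge\bar c_1^{(k)}$) and the elastic-but-near-threshold regime (cost $\ge c_W\cdot$threshold$^2$), both of order $1/k$.

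Second, your inequality $\esssup\,\mathrm{dist}^2(\nabla_{k_j}\ybk,\mathrm{SO}(3))\le Ce_{k_j}/k_j$ is misattributed: the energy bound only controls an $L^2$ quantity. The actual $\esssup$ bound of order $1/k$ on intact slices follows directly from the definition of unbroken cells, $\mathrm{dist}\le c_{\rm frac}^{(k)}$, together with (W3) giving $(c_{\rm frac}^{(k)})^2\sim C/k$. You did flag the extension issue in your final paragraph, so these are fixable, but both corrections are needed for the stated form of (iii) and the subsequent invocation of \cite{elRods}.
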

\begin{proof}
\tc{By properties of the extension scheme from Subsection \ref{sec:rie} (see \cite[Remark~2.1]{elRods}) there is a constant $\hat{C}_{e}\geq 1$} such that for any $\xb\in\Lkexcb$, setting $\mathcal{U}(\xb)=\bigl(\{x_1-\frac{1}{k},x_1,x_1+\frac{1}{k}\}\times\mathcal{L}'\bigr) \tc{\cap \tilde{\Lambda}_k'}$ we have 
\begin{equation}\label{eq:nbhd-rig-est}
\mathrm{dist}^2(\tc{\bar{\nabla}_k}\ybk(\xb),\bar{\rm SO}(3))\leq \hat{C}_{e}^2\sum_{\xi\in\mathcal{U}(\xb)}\mathrm{dist}^2(\tc{\bar{\nabla}_k}\ybk(\xi),\bar{\rm SO}(3)).
\end{equation}
Let $S_k(\xb_1)$ denote a slice of the rod at the point $\xb_1$:
$$ S_k(\xb_1)=\bigl(\frac{1}{k}\lfloor k\xb_1\rfloor,\frac{1}{k}\lfloor k\xb_1\rfloor+\frac{1}{k}\bigr)\times S^{\rm ext},\quad \xb_1\in [0,L].$$
A slice $S_k(\xb_1)$ is regarded as broken if there is an $\xb'\in S$ such that
$$\mathrm{dist}\bigl(\bar{\nabla}\hat{y}^{(k)}(k\xb_1, \xb'), \bar{\rm SO}(3)\bigr)> \frac{c^{(k)}_{\rm frac}}{\sqrt{3\sharp\mathcal{L}'}\hat{C}_{\rm e}}.$$
Like this, for any $x$ such that the slice $S_k(\xb_1)$ and, if existent, the neighbouring slices $S_k(\xb_1\pm\frac{1}{k})$ are not broken, $\bar{\nabla}_k\ybk(\xb)$ is at most $c_{\rm frac}^{(k)}$-far from $\bar{\rm SO}(3)$ even if \tc{$\xb\in \O_k^{\rm ext}\setminus (0,L_k)\times S^{\rm ext}$}. Write $X_1^{(k)}$ for the set of all midpoints of the $\xb_1$-projections of broken slices:
$$X_1^{(k)}=\bigl\{\xb_1\in\bigl(\frac{1}{2k}+\frac{1}{k}\Z\bigr)\cap [0,L);\;S_k(\xb_1)\text{ is broken}\bigr\}.$$
We have $\sharp X_1^{(k)}\leq C_{\rm f}$ with $C_{\rm f}>0$ independent of $k$, since by Assumptions \tc{(W3) and (W5)}
\begin{align*}
\min \Big\{ W_{\rm cell}^{(k)}(\vec y);\; \vec y\in\R^{3\times 8},\ \mathrm{dist}(\bar{\nabla}\hat{y}, \bar{\rm SO}(3)) \ge \frac{c^{(k)}_{\rm frac}}{\sqrt{3\sharp\mathcal{L}'}\hat{C}_{\rm e}} \Big\}
\ge \min \Big\{ \frac{c_W (c^{(k)}_{\rm frac})^2}{3\sharp\mathcal{L}'\hat{C}_{\rm e}^2},\ \bar{c}_1^{(k)} \Big\} 
\ge \frac{c}{k}
\end{align*}
for a constant $c > 0$ and so 
\begin{align}
C \ge kE^{(k)}(y^{(k)})
&\ge\sum_{\hat{x}\in\hat{\Lambda}_k'^{,\rm ext}}kW_{\rm tot}^{(k)}\bigl(\hat{x}',\vec{y}^{\,(k)}(\hat{x})\bigr)
\label{eq:limEbdd} \\
&\ge c \sharp X_1^{(k)} 
+\underbrace{k\sum_{\tc{\hat{x}\in\hat{\Lambda}_k'^{,\rm ext}},\ \hat{x}_1 \notin kX_1^{(k)} }W_{\rm tot}^{(k)}\bigl(\hat{x}',\vec{y}^{\,(k)}(\hat{x})\bigr)}_{\text{elastic part }(\ge 0) }.  \label{eq:E-intact} 
\end{align}
If we pass to a subsequence $\{k_j\}_{j=1}^\infty\subset\N$, we find $n_{\rm f}\in \N$, $0\leq n_{\rm f}\leq C/c$, such that for every $j\in\N$, there are always precisely $n_{\rm f}$ broken slices, i.e.\ $\forall j\in\N\colon \sharp X_1^{(k_j)}=n_{\rm f}$, and 
$$X_{1}^{(k_j)}=\{s_j^1,s_j^2,\dots, s_j^{n_{\rm f}}\},\quad s_j^1<s_j^2<\cdots< s_j^{n_{\rm f}}.$$
We observe that the location $s_j^i$ of the $i$-th broken slice, $1\leq i\leq n_{\rm f}$, remains in the compact interval $[0, L]$, so we construct a further subsequence, which we still denote by $(k_j)_{j=1}^\infty$, so that
$$\forall i\in\{1,2,\dots, n_{\rm f}\}\colon \lim_{j\goto\infty}s_j^i=s^i\in[0,L].$$
Naturally it can be that some of the limiting positions of cracks $s^i$, $i=1,2,\dots n_{\rm f}$, coincide or appear at the endpoints of the rod, hence we rewrite
$$X_1:=\{s^i;\; 0<s^i<L,\;1\leq i\leq n_{\rm f}\}=\{\sigma^i\}_{i=1}^{\bar{n}_{\rm f}},$$
where the number $\bar{n}_{\rm f}\leq n_{\rm f}$. Further, $\sigma^0:=0$ and $\sigma^{\bar{n}_{\rm f}+1}:=L$.

Suppose $0<\eta<\frac{1}{2}\min_{0\leq i\leq \bar{n}_f} |\sigma^{i+1}-\sigma^i|$. If $j$ is large enough, then for all $i$, $0\leq i\leq \bar{n}_{\rm f}$, 
$$[\sigma^i+\eta,\sigma^{i+1}-\eta]\cap \bigl(\xb_1-\frac{3}{2\tc{k_j}},\xb_1+\frac{3}{2\tc{k_j}}\bigr)=\emptyset.$$ 
Thus the regions $[\sigma^i+\eta,\sigma^{i+1}-\eta]\times S$ are intact, so we can replace $W_{\rm cell}^{(k)}$ by $W_0$ and safely apply our results about purely elastic rods here (see \cite[Theorem~2.4]{elRods}). Specifically, $\yb^{(k_j)} \goto \yb$ in $L^2((\sigma^i+\eta,\sigma^{i+1}-\eta)\times S^{\rm ext};\R^3)$, $\nabla_{k_j}\yb^{(k_j)}\goto R=(\pl_{\xb_1}\yb|d_2|d_3)$ in $L^2((\sigma^i+\eta,\sigma^{i+1}-\eta)\times S^{\rm ext};\R^{3\times 3})$, and the $\xb'$-independent limit satisfies $\yb\in H^2((\sigma^i+\eta,\sigma^{i+1}-\eta);\R^3)$, $d_2,d_3\in H^1((\sigma^i+\eta,\sigma^{i+1}-\eta);\R^3)$, and $R\in\mathrm{SO}(3)$ a.e. (We extracted another subsequence without changing the subindices.)  By passing to a diagonal sequence we find a single sequence that \tc{satisfies convergence properties (i)--(ii)} for any choice of $\eta$. Moreover, the $L^{\infty}$ bound in \eqref{eq:limsEfin} and the uniform energy bound in \eqref{eq:E-intact} show that indeed $\yb \in \PH{2}$ and $R \in \PHgen{1}{3\times 3}$. Finally passing to yet another subsequence (not relabelled), we find $\mu_{k_j}\wto^*\mu$ for some Radon measure $\mu$ since \eqref{eq:limEbdd} implies $\sup_k \mu_k([0,L])< \infty$.
\end{proof}

\section{Main result}
\begin{thm}\label{Gamma}
If $k\goto\infty$, we have $E^{(k)}\stackrel{\Gamma}{\goto}E_{\rm lim}$, more precisely:
\begin{enumerate}
\item[(i)] \upshape{(liminf inequality)} \itshape Let $(y^{(k)})_{k=1}^\infty$ be a sequence of lattice deformations such that their piecewise affine interpolations and extensions $(\ybk)_{k=1}^\infty\subset H^1(\O_k^{\rm ext};\R^3)$, defined in Subsection \ref{sec:rie}, converge in $L^2(\O^{\rm ext};\R^3)$ to $\yb\in L^2((0,L);\R^3)$ for which there is a partition $(\varsigma^i)_{i=0}^{\tilde{n}_{\rm f}+1}$ of $[0,L]$ such that $\yb|_{(\varsigma^i,\varsigma^{i+1})}\in H^1((\varsigma^i,\varsigma^{i+1})\times S^{\rm ext};\R^3)$, $0\leq i\leq \tilde{n}_{\rm f}$.

Assume further that for any $\eta>0$ sufficiently small, we have $k\pl_{\xb_s}\ybk\goto d_s\in L^2((0,L);\R^3)$ in $L^2((\varsigma^i+\eta,\varsigma^{i+1}-\eta)\times S^{\rm ext};\R^3)$, $s=2,3$, $0\leq i\leq \tilde{n}_{\rm f}$ ($L_{\rm loc}^2$-convergence). Then
$$E_{\rm lim}(\yb,d_2,d_3)\leq \liminf_{k\goto\infty} kE^{(k)}(y^{(k)}).$$

\item[(ii)] \upshape{(existence of a recovery sequence)} \itshape Let $\yb\in L^2((0,L);\R^3)$ be such there is a partition $(\varsigma^i)_{i=0}^{\tilde{n}_{\rm f}+1}$ of $[0,L]$ for which $\yb|_{(\varsigma^i,\varsigma^{i+1})}\in H^1((\varsigma^i,\varsigma^{i+1});\R^3)$, and let $d_2,d_3\in L^2((0,L);\R^3)$. Then there exists a sequence of lattice deformations $(y^{(k)})_{k=1}^\infty$ such that their piecewise affine interpolations and extensions $(\ybk)_{k=1}^\infty\subset H^1(\tc{\O_k^{\rm ext}};\R^3)$ satisfy $\ybk\goto\yb$ in $L^2(\tc{\O^{\rm ext}};\R^3)$, $k\frac{\pl\ybk}{\pl\xb_s}\goto d_s$ in $L_{\rm loc}^2((\varsigma^i,\varsigma^{i+1})\times S^{\rm ext};\R^3)$ for $s=2,3$, $0\leq i\leq \tilde{n}_{\rm f}$, and
$$\lim_{k\goto\infty} kE^{(k)}(y^{(k)})=E_{\rm lim}(\yb,d_2,d_3).$$
Moreover, if $||\yb||_{\tc{L^\infty((0,L);\R^3)}} \le M$ and the cell energies satisfy the maximum interaction range property \tc{(W9)}, then for any \tc{$(\zeta_k)_{k=1}^{\infty}\subset(0,1)$} with $\zeta_k \searrow 0$ and $\zeta_k/M_k \to \infty$ one can choose $y^{(k)}$ such that $\tg{||y^{(k)}||_{\tc{\ell^\infty(\Lambda_k;\R^3)}}} \le M+\zeta_k$.
\end{enumerate}
The limit energy functional is given by
\begin{equation*}
E_{\rm lim}(\yb,d_2,d_3)=
\begin{cases}
\begin{aligned}
\frac{1}{2}\int_0^L &Q_3^{\rm rel}(\T{R}\pl_{\xb_1}R)\md\xb_1\\
&+\sum_{\sigma\in S_{\yb}\cup S_R}\varphi\bigl(\yb(\sigma+)-\yb(\sigma-),(R(\sigma-))^{-1}R(\sigma+)\bigr)\end{aligned} & \text{if } (\yb,d_2,d_3)\in\calA,\\
+\infty & \text{otherwise},
\end{cases}
\end{equation*}
where $R:=(\pl_{\xb_1}\yb|d_2|d_3)$, $S_R:=S_{\yb'}\cup S_{d_2}\cup S_{d_3}$, and the class of admissible deformations
\begin{multline*}
\calA:=\bigl\{(\yb,d_2,d_3)\in (L^1(\O;\R^3))^3;\;\yb,d_2,d_3\textit{ do not depend on }\xb_2,\xb_3,\\
(\yb,d_2,d_3)\in \PH{2}\times(\PH{1})^2\text{ as functions of }\xb_1\text{ only,}\\
\Bigl(\frac{\pl\yb}{\pl\xb_1}\,\Big|\,d_2\,\Big|\, d_3\Bigr)\in{\rm SO}(3)\text{ a.e. in }(0,L)\bigr\}.
\end{multline*}
The relaxed quadratic form $Q_3^{\rm rel}\colon\R^{3\times 3}_{\rm skew}\goto[0,+\infty)$ is defined as
\begin{multline}\label{eq:minProb}
Q_3^{\rm rel}(A):=\min_{\substack{\a\colon\mathcal{L}^{\rm ext}\goto\R^3\\g\in\R^3}}\sum_{\xb'\in\mathcal{L}'^{,\rm ext}} Q_{\rm tot}\Bigl(\xb',\frac{1}{2}\Bigl(A\begin{pmatrix}
0\\
\xb_2\\
\xb_3
\end{pmatrix}+g\Bigr)(-1,-1,-1,-1,1,1,1,1)\\
+\frac{1}{4}A\begin{pmatrix}
0&0&0&0&0&0&0&0\\
1&1&-1&-1&-1&-1&1&1\\
1&-1&-1&1&-1&1&1&-1
\end{pmatrix}+(\d2d\a|\d2d\a)\Bigr)
\end{multline}
with $Q_{\rm tot}(\xb',\cdot)=Q_3+Q_{\rm surf}(\xb',\cdot)$, and $\ph\colon \R^3 \times \mathrm{SO}(3) \to [0,\infty]$ is introduced in \eqref{eq:phi}.
\end{thm}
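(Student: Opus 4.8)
\textbf{Proof proposal for Theorem \ref{Gamma}.}

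The plan is to split the argument in the classical way into the two inequalities, reusing the elastic $\Gamma$-convergence results of \cite{elRods} away from cracks and handling the fracture contribution via the cell formula $\ph$ defined in \eqref{eq:phi}. For the \emph{liminf inequality}, I would first dispose of the trivial case: if $(\yb,d_2,d_3)\notin\calA$ then there is nothing to prove. Otherwise I would fix a competitor sequence $(y^{(k)})$ with $\sup_k kE^{(k)}(y^{(k)})<\infty$ (else the bound is $+\infty$) and apply Theorem \ref{comp} to obtain, along a subsequence realizing the liminf, a finite set of limiting crack points $\{\sigma^i\}$ and $L^2_{\rm loc}$-convergence of $\nabla_{k}\ybk$ to $R=(\pl_{\xb_1}\yb|d_2|d_3)$ on each intact interval. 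Away from the $\sigma^i$ I would localize the energy: since the broken slices are confined to $O(1)$ atomic layers, on $(\sigma^i+\eta,\sigma^{i+1}-\eta)\times S^{\rm ext}$ one may replace $W_{\rm cell}^{(k)}$ by $W_0$ and quote \cite[Theorem~2.4]{elRods} to get $\liminf_k \mu_k((\sigma^i+\eta,\sigma^{i+1}-\eta))\ge \frac12\int_{\sigma^i+\eta}^{\sigma^{i+1}-\eta}Q_3^{\rm rel}(\T{R}\pl_{\xb_1}R)\,\md\xb_1$; letting $\eta\to0$ and using superadditivity of $\liminf$ on disjoint open sets recovers the full bulk term. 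For the surface (jump) term I would carry out a blowup around each $\sigma^i$: rescaling the $\xb_1$-coordinate by $k$ near a crack, the localized energy $\mu_k$ restricted to a shrinking neighbourhood of $\sigma^i$ converges, after the blowup, to an atomistic cell problem whose value is bounded below by $\ph(\yb(\sigma^i+)-\yb(\sigma^i-),(R(\sigma^i-))^{-1}R(\sigma^i+))$; this uses frame-indifference (W1), the non-degeneracy lower bound (W5), and the identification of the admissible far-field boundary data coming from the convergence of $\nabla_k\ybk$ on both sides of $\sigma^i$. Summing over $i$ and combining with the bulk estimate gives $E_{\rm lim}(\yb,d_2,d_3)\le\liminf_k kE^{(k)}(y^{(k)})$.

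For the \emph{limsup inequality}, given $(\yb,d_2,d_3)\in\calA$ (outside $\calA$ the statement is vacuous) I would construct $(y^{(k)})$ by gluing: on the intact part use the elastic recovery sequence of \cite[Theorem~2.4]{elRods}, which already yields the bulk energy $\frac12\int_0^L Q_3^{\rm rel}(\T R\pl_{\xb_1}R)$, matching the claimed $L^2$ and $L^2_{\rm loc}$ convergences; near each jump point $\sigma\in S_{\yb}\cup S_R$ insert a transition profile whose cost approximates $\ph(\yb(\sigma+)-\yb(\sigma-),(R(\sigma-))^{-1}R(\sigma+))$. The difficulty here, as the introduction flags, is that $W_{\rm cell}^{(k)}$ depends on $k$, so the near-optimal competitor for $\ph$ at level $k$ cannot simply be rescaled to a different lattice. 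I would resolve this exactly as announced: invoke Lemma \ref{rigCrack} to choose an approximate minimizer $(\mathscr{y}^{(k)})$ for $\ph$ with cracks confined to a bounded number of atomic slices, then transport it to the actual lattice $\Lambda_k$ via Proposition \ref{arbAtomDist}, obtaining for \emph{every} $k$ a local profile realizing $\ph$ up to an error that vanishes as $k\to\infty$. Splicing these local profiles into the elastic recovery sequence on the atomic length scale costs only lower-order bulk energy (the transition region has width $O(1/k)$), so $\limsup_k kE^{(k)}(y^{(k)})\le E_{\rm lim}(\yb,d_2,d_3)$; together with part (i) this forces equality.

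It remains to address the $L^\infty$ refinement. Assuming $\|\yb\|_{L^\infty}\le M$ and (W9), I would modify the recovery sequence far from the crack points by composing, on the pieces of the deformed rod that would otherwise escape, with cut-off rigid motions supported where the lattice gradient is already rigid; by (W9) the change in cell energy on the $O(M_k k)$ affected cells is controlled by $C_{\rm far}/(M_k k^2)$ per cell, i.e.\ $O(1/k)$ in total after the $k$-prefactor, which is absorbed into the error. Choosing the cut-off scale comparable to $\zeta_k$ (legitimate since $\zeta_k/M_k\to\infty$) yields $\|y^{(k)}\|_{\ell^\infty(\Lambda_k;\R^3)}\le M+\zeta_k$ while preserving all convergences and the energy equality; this is in the spirit of Lemma \ref{LinftyBdd}. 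I expect the main obstacle to be the two-sided blowup analysis in part (i) — matching the atomistic cell problem with the continuum data $\bigl(\yb(\sigma+)-\yb(\sigma-),(R(\sigma-))^{-1}R(\sigma+)\bigr)$ and showing no energy is lost in the passage — together with the careful bookkeeping in the limsup construction needed to make the single sequence $(y^{(k)})$ work for all $k$ rather than along a subsequence.
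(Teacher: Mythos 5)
Your proposal follows the same architecture as the paper's proof: compactness plus elastic estimates away from the limiting crack points, a blow-up at each crack point for the lower bound, and a gluing construction built on Lemma~\ref{rigCrack} and Proposition~\ref{arbAtomDist} for the upper bound, with the $L^\infty$ refinement obtained by translating far-away pieces using (W9) in the spirit of Proposition~\ref{LinftyBdd}.

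The one place where you flag a difficulty but do not resolve it is in fact the technical crux, and it deserves to be named precisely. The blow-up around a jump point $\sigma^i$ naturally produces competitors $\mathscr{y}^{(k_n)}$ whose boundary behaviour on $I^\pm_\psi\times S^{\rm ext}$ is only that $\mathscr{y}^{(k_n)}\to y_{\rm PC}$ and $\nabla_{r_n,k_n}\mathscr{y}^{(k_n)}\to R_{\rm PC}$ in $L^2$. The cell formula $\varphi$ in \eqref{eq:phi}, however, is defined via competitors that are \emph{exactly} rigid on the outer intervals $I^\pm$ with data $(y^{(k_n)}_\pm,R^{(k_n)}_\pm)$ converging to $(\yb^\pm,R^\pm)$. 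Passing from the blow-up bound (the preliminary formula $\psi$) to $\varphi$ requires showing $\psi\ge\varphi$. The paper does this in Subsection~\ref{sec:finCell} by: (a) a De Giorgi-type pigeonhole argument (Lemma~\ref{DeGiorgiTrick}) to select an intact slice $w^{(n)}_\pm$ on which the localized energy, the $L^2$-distance of $\nabla_{r_n,k_n}\mathscr{y}^{(k_n)}$ to $R^\pm$, and the $L^2$-distance of $\mathscr{y}^{(k_n)}$ to $\yb^\pm$ are all simultaneously small; (b) the geometric rigidity theorem of Friesecke--James--Müller on a three-slice neighbourhood to produce a rotation $R^{(k_n)}_\pm$ close to the actual gradient at that slice, together with a Poincaré inequality to fix $y^{(k_n)}_\pm$; (c) a surgery replacing $\mathscr{y}^{(k_n)}$ by the rigid extension on one side of $w^{(n)}_\pm$, with a careful Taylor-expansion estimate showing the energetic cost of the single transition slice vanishes. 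Without some argument of this type, the lower bound would be against a functional defined with the weaker $L^2$ boundary data, not against $\varphi$ as it appears in $E_{\rm lim}$, so the liminf inequality would not close. You also omit the density step needed in (ii) to go from $\mathcal{C}^3$ profiles with locally constant $R$ near the jumps to general $(\yb,d_2,d_3)\in\mathcal{A}$, but that is a more routine approximation argument (projecting $H^1$ approximants onto ${\rm SO}(3)$ and adjusting by rigid motions to match the jump data exactly), and the rest of your plan is sound and matches the paper.
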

\begin{rem}
It follows from the positive semidefiniteness of $Q_{\rm tot}$ that the minimum in \eqref{eq:minProb} is attained.
\end{rem}
\begin{rem}
The elastic part of our limiting functional includes a matrix expressing what we call an \textit{ultrathin correction} -- it is the first term on the second line of \eqref{eq:minProb}. The term is responsible for atomic effects that a continuum theory merely based on the Cauchy–Born rule would not capture.
\end{rem}
\begin{rem}\label{phBdd}
Assumptions \tc{(W3)}, \tc{(W5)} and the compactness result \cite[Theorem~2.4]{elRods} in the elastic case imply that $\varphi \ge \bar{c}_1$ for some constant $\bar{c}_1 > 0$ on $\R^3 \times \mathrm{SO}(3) \setminus \{(0,\tc{\Id})\}$ (and $\varphi(0,\tc{\Id}) = 0$). If \tc{(W8)} holds true, then we also have $\varphi \le \bar{C}_1$ for a constant $\bar{C}_1 < \infty$.
\end{rem}
\begin{rem}
The universality of the sequence $\zeta_k$ obtained in (ii) would allow to impose an $L^{\infty}$ constraint energetically by simply setting $E^{(k)}(y^{(k)}) = +\infty$ if $||y^{(k)}||_{\infty} > \tg{M+\zeta_k}$. One then has a directly matching compactness result in Theorem~\ref{comp}.
\end{rem}
\begin{rem}
The convergence of deformations used in Theorem \ref{Gamma} is equivalent to
\begin{align*}
\ybk(\cdot,x')\goto\yb&\text{ in }L^2((0,L);\R^3) \text{ for every } x'\in\mathcal{L} \text{ and} \\  \bar{\nabla}_k\ybk\goto R\,\bar{\Id}&\text{ in }L_{\rm loc}^2((\varsigma^i,\varsigma^{i+1})\times S;\R^{3\times 8}) \text{ for } 0\leq i\leq \tilde{n}_{\rm f},
\end{align*}
which shows the limit's independence of our interpolation scheme.
\end{rem}

\section{Proof of the lower bound}
The proof is divided into four parts.
\subsection{First step -- elastic part}
Since the conclusion is immediate if the liminf is infinite, let us assume the contrary; $\ybk\goto\yb$ in $L^2(\O;\R^3)$ and after extracting a subsequence,
\begin{equation}\label{eq:liminfEbdd} 
\lim_{k\goto\infty} kE^{(k)}(y^{(k)})=\liminf_{k\goto\infty} kE^{(k)}(y^{(k)})< \infty.
\end{equation}
Let $(\sigma^i)_{i=0}^{\bar{n}_{\rm f}+1}$, $\nabla_{k_j}\yb^{(k_j)}$, $\mu_k$, $\mu$ be as in Theorem \ref{comp} and fix $\eta>0$ small. Then by the results about purely elastic rods (\cite[Theorem~3.1]{elRods}), the bound 
\begin{equation*}
\liminf_{k\goto\infty}\sum_{\substack{\hat{x}\in\hat{\Lambda}_k'^{,\mathrm{ext}}\\ \hat{x}_1\in k[\sigma^i+\eta,\sigma^{i+1}-\eta]}}kW_{\rm tot}^{(k)}\left(\hat{x}',\vec{y}^{\,(k)}(\hat{x})\right)\geq \frac{1}{2} \int_{\sigma^i+\eta}^{\sigma^{i+1}-\eta} Q_3^{\rm rel}(\T{R}\pl_{\xb_1}R)\md\xb_1,\quad i=0,1,\dots,\bar{n}_{\rm f},
\end{equation*}
holds true. Since this is fulfilled for any $\eta$, we can let $\eta\goto 0+$ and use the monotone convergence theorem, as we will see later. 

\subsection{Second step --  \texorpdfstring{$\bm{w^*}$}{$w^*$}-limit in measures}
For the crack contribution to the strain energy, we use the \textit{blow-up method} of Fonseca and Müller \cite{blowup}. We will not make a notational distinction between $(\ybk)$ and its hitherto constructed subsequence $(\yb^{(k_j)})$ any more, as this is not relevant for our $\Gamma$-convergence proof.

Now note that $S_{\yb}\cup S_R\subset X_1$, where $X_1=\{\sigma^i\}_{i=1}^{\bar{n}_{\rm f}}$ is from the proof of Theorem \ref{comp}. Write $\tilde{\mathcal{H}}:=\mathcal{H}^0\mres S_{\yb}\cup S_R$. Decomposing $\mu$ into an absolutely continuous part and a singular part, we have
$$\mu=\frac{\md \mu}{\md \tilde{\mathcal{H}}}\tilde{\mathcal{H}}+\mu_{\rm s}$$
with $\mu_{\rm s}\geq 0$. The $w^*$-convergence then gives (cf. \cite[Th.~1.40]{Gariepy})
\begin{equation*}
\liminf_{k\goto\infty}\sum_{i=1}^{\bar{n}_{\rm f}}\sum_{\substack{\hat{x}\in\hat{\Lambda}_k^{',\mathrm{ext}}\\ \hat{x}_1\in k(\sigma^i-\eta,\sigma^i+\eta)}}kW_{\rm tot}^{(k)}\bigl(\hat{x}',\vec{y}^{\,(k)}(\hat{x})\bigr)
\geq \mu\; \Bigl(\bigcup_{i=1}^{\bar{n}_{\rm f}} (\sigma^i-\eta,\sigma^i+\eta) \Bigr)
\geq \sum_{\sigma\in S_{\yb}\cup S_R}\frac{\md \mu}{\md \tilde{\mathcal{H}}}(\sigma).
\end{equation*}
The goal now is to find the \textit{asymptotic minimal energy $\varphi=\varphi(\yb^+-\yb^-,(R^-)^{-1}R^+)$ necessary to produce a crack or kink} and for every $1\leq i\leq n_{\rm f}$, show that
\begin{equation*}
\frac{\md \mu}{\md \tilde{\mathcal{H}}}(\sigma^i)\geq \varphi\bigl(\yb(\sigma^i+)-\yb(\sigma^i-),(R(\sigma^i-))^{-1}R(\sigma^i+)\bigr).
\end{equation*}
Let us expand the definition of the derivative of $\mu$:
\begin{equation*}
\frac{\md \mu}{\md \tilde{\mathcal{H}}}(\sigma^i)\eqdef\lim_{r\goto 0+}\frac{\mu([\sigma^i-r,\sigma^i+r])}{\tilde{\mathcal{H}}([\sigma^i-r,\sigma^i+r])}=\lim_{r\goto 0+}\frac{\mu([\sigma^i-r,\sigma^i+r])}{1}.
\end{equation*}
By \cite[Prop.~1.15]{Fonseca} and \cite[Th.~1.40]{Gariepy}, we can find $r_n\searrow 0$
such that
\begin{align*}
\frac{\md \mu}{\md \tilde{\mathcal{H}}}(\sigma^i)
&=\lim_{n\goto\infty}\lim_{k\goto\infty}\mu_k((\sigma^i-r_n,\sigma^i+r_n))\\
&=\lim_{n\goto\infty}\lim_{k\goto\infty}\sum_{\substack{\hat{x}\in\hat{\Lambda}_k'^{,\mathrm{ext}}\\ \hat{x}_1\in k(\sigma^i-r_n,\sigma^i+r_n)}}kW_{\rm tot}^{(k)}\bigl(\hat{x}',\vec{y}^{\,(k)}(\hat{x})\bigr).
\end{align*}

\subsection{Third step -- preliminary cell formula obtained by blowup}

First we shall find a preliminary lower bound $\psi$ by rescaling $(\sigma^i-r_n,\sigma^i+r_n)$ to a fixed interval (cf. \cite[proof of Theorem~5.14, Step 3]{SBVbook}). There is a sequence $(k_n)_{n=1}^\infty$ such that $k_n\geq n$, $r_nk_n\goto\infty$,
\begin{align*}
\frac{\md \mu}{\md \tilde{\mathcal{H}}}(\sigma^i)=\lim_{n\goto\infty}\sum_{\substack{\hat{x}\in\hat{\Lambda}_{k_n}'^{,\mathrm{ext}}\\ \hat{x}_1\in k_n( \sigma^i-r_n,\sigma^i+r_n)}}k_nW_{\rm tot}^{(k_n)}\bigl(\hat{x}',\vec{y}^{\,(k_n)}(\hat{x})\bigr), 
\end{align*} 
as well as 
\begin{align} 
\int_{(\sigma^i-2r_n,\sigma^i+2r_n)\times S^{\rm ext}}|\yb^{(k_n)}-\yb|^2\md\xb_1\md\xb'+
\int_{\{\sigma^i+[ (-2r_n,-\frac{1}{4}r_n)\cup(\frac{1}{4}r_n,2r_n)]\}\times S^{\rm ext}}|\nabla_{k_n}\yb^{(k_n)}-R|^2\md\xb\leq r_n^2
\label{eq:diagConv} 
\end{align} 
and $\sigma^i-\frac{r_n}{2}+\frac{2}{k_n}<s_{k_n}^j<\sigma^i+\frac{r_n}{2}-\frac{2}{k_n}$ for every $n\in\N$ and each of the (finitely many) sequences $(s_{k_n}^j)_{n=1}^\infty$ of midpoints of broken slices satisfying $\lim_{n\goto\infty} s_{k_n}^j=\sigma^i$. Since the restrictions of $\yb$ and $R$ to left and right neighbourhoods of $\sigma^i$ are $H^1$, we get for the rescaled functions 
\begin{align*}
y^{\ddagger,n}(w_1)&:=\yb(\sigma^i+r_nw_1),\\
R^{\ddagger,n}(w_1)&:=R(\sigma^i+r_nw_1),\;w_1\in[-1,1],
\end{align*}
the convergences $y^{\ddagger,n}\goto y_{\rm PC}$ in $L^2([-1,1];\R^3)$ and $R^{\ddagger,n}\goto R_{\rm PC}$ in $L^2([-1,1];\R^{3\times 3})$ for $n\goto\infty$, where the piecewise constant functions $y_{\rm PC}$, $R_{\rm PC}$ are defined through
\begin{align*}
y_{\rm PC}(w_1):=\begin{cases}
\yb(\sigma^i-)=\yb^- & w_1<0,\\
\yb(\sigma^i+)=\yb^+ & w_1\geq 0,
\end{cases} 
\quad\text{ and }\quad 
R_{\rm PC}(w_1):=\begin{cases}
R(\sigma^i-)=R^- & w_1<0,\\
R(\sigma^i+)=R^+ & w_1\geq 0.
\end{cases}
\end{align*}
We also set, for $w_1\in[-1,1]$,
\begin{align*}
\mathscr{y}^{(k_n)}(w_1,\xb')
&:=\yb^{(k_n)}(\sigma^i_{k_n}+r_nw_1,\xb'),\\
\nabla_{r_n,k_n}\mathscr{y}^{(k_n)}(w_1,\xb')
&:=\bigl(\frac{1}{r_n}\pl_{w_1}\mathscr{y}^{(k_n)}|k_n\pl_{\xb_2}\mathscr{y}^{(k_n)}|k_n\pl_{\xb_3}\mathscr{y}^{(k_n)}\bigr)
=\nabla_{k_n}\yb^{(k_n)}(\sigma^i_{k_n}+r_nw_1,\xb'),
\end{align*}
where $\sigma^i_{k_n}=\frac{1}{k_n}\lfloor {k_n} \sigma^i \rfloor$. 
Then using \eqref{eq:diagConv}, we get $\mathscr{y}^{(k_n)}\goto y_{\rm PC}$ in $L^2([-1,1]\times S^{\rm ext};\R^3)$ and $\nabla_{r_n,k_n}\mathscr{y}^{(k_n)}\goto R_{\rm PC}$ in $L^2([I_\psi^-\cup I_\psi^+]\times S^{\rm ext}];\R^{3\times 3})$, where $I_\psi^-=[-1,-\frac{1}{2}]$ and $I_\psi^+=[\frac{1}{2},1]$. This gives the preliminary estimate with `converging boundary conditions':
\begin{align*}
\frac{\md \mu}{\md \tilde{\mathcal{H}}}(\sigma^i)
&\geq\min\Bigl\{\limsup_{n\goto\infty}\sum_{(w_1,\xb')\in\Lambda_{r_n,k_n}'}k_nW_{\rm tot}^{(k_n)}\bigl(\xb',\vec{\mathscr{y}}^{\,(k_n)}(w_1,\xb')\bigr); \\ 
&\qquad \qquad \mathscr{y}^{(k_n)}\in\mathrm{PAff}(\Lambda_{r_n,k_n}),\; r_n\searrow 0,\; r_nk_n\goto\infty,\; \\ 
&\qquad \qquad ||\mathscr{y}^{(k_n)}-y_{\rm PC}||_{L^2(I_\psi^\pm\times S^{\rm ext})}\goto 0,\;||\nabla_{r_n,k_n}\mathscr{y}^{(k_n)}-R_{\rm PC}||_{L^2(I_\psi^\pm\times S^{\rm ext})}\goto 0 \Bigr\} \\
&=:\tilde{\psi}(\yb^-,\yb^+,R^-,R^+), 
\end{align*}
where 
\begin{align*}
\vec{\mathscr{y}}^{\,(k_n)}(w_1,\xb')
&:=k_n\bigl(\mathscr{y}^{(k_n)}\bigl(w_1+\frac{1}{r_nk_n}\zf^i_1,\xb'+(\zf^i)'\bigr)\bigr)_{i=1}^8,\\
\Lambda_{r_n,k_n}
&:=\Bigl(\frac{1}{r_nk_n}\Z \cap \bigl(-1-\frac{1}{r_nk_n}, 1+\frac{1}{r_nk_n}\bigr)\Bigr)\times\mathcal{L}^{\rm ext},\\
\Lambda_{r_n,k_n}'
&:=\Bigl(\bigl(\frac{1}{2r_nk_n}+\frac{1}{r_nk_n}\Z \bigr) \cap \bigl(-1-\frac{1}{2r_nk_n}, 1+\frac{1}{2r_nk_n}\bigr)\Bigr)\times\mathcal{L}'^{,\rm ext},
\end{align*}
and $\mathrm{PAff}(\Lambda_{r_n,k_n})$ denotes the class of piecewise affine mappings $\mathscr{v}\colon [-1-\frac{1}{r_nk_n}, 1+\frac{1}{r_nk_n}] \times \overline{S^{\rm ext}}\goto\R^3$ which are generated by interpolating their values from $\Lambda_{r_n,k_n}$ by the scheme from Subsection \ref{sec:rie}. The minimum in $\tilde{\psi}$ runs over all sequences $\{r_n\}\subset (0,\infty)$, $\{k_n\}\subset\N$ and $(\mathscr{y}^{(k_n)})$ with the above properties.

It can be shown by a diagonalization argument that the minimum is attained; this is also the case in \eqref{eq:phi}. From the translation and rotation invariance of $W_{\rm cell}^{(k)}$ we see that $\tilde{\psi}(\yb^-,\yb^+,R^-,R^+)=\psi(\yb^+-\yb^-,(R^-)^{-1}R^+)$ for a function $\psi\colon \R^3\times{\rm SO}(3)\goto [0,\infty]$.

\subsection{Fourth step -- rigid boundary conditions in the cell formula}\label{sec:finCell}

At last, we relate the preliminary cell formula $\psi$ to the final cell formula which uses rigid boundary conditions instead of $L^2$-converging ones:
\begin{align}\label{eq:phi}
\begin{split}
\ph\bigl(\yb^+-\yb^-,(R^-)^{-1}R^+\bigr)
&=\min\Bigl\{\limsup_{n\goto\infty}\sum_{(w_1,\xb')\in\Lambda_{r_n,k_n}'}k_nW_{\rm tot}^{(k_n)}\bigl(\xb',\vec{\mathscr{y}}^{\,(k_n)}(w_1,\xb')\bigr);\\
&\qquad\qquad \bigl((r_n)_{n=1}^\infty,(k_n)_{n=1}^\infty,(\mathscr{y}^{(k_n)})_{n=1}^\infty\bigr)\in\mathcal{V}_{\yb^+-\yb^-,(R^-)^{-1}R^+}\Bigr\}
\end{split}
\end{align}
with 
\begin{align*}\label{eq:V}
\begin{split}
\mathcal{V}_{\yb^+-\yb^-,(R^-)^{-1}R^+}
&=\Bigl\{\bigl((r_n)_{n=1}^\infty,(k_n)_{n=1}^\infty,(\mathscr{y}^{(k_n)})_{n=1}^\infty\bigr)\in (0,\infty)^\N\times\N^\N\times\mathrm{PAff}(\Lambda_{r_n,k_n})^\N;\\
&\qquad\mathscr{y}^{(k_n)}(w_1,\xb')=R_\pm^{(k_n)}\T{\bigl(r_nw_1,\frac{1}{k_n}\xb'\bigr)}+y_\pm^{(k_n)}\text{ on }I^\pm\times \overline{S^{\rm ext}},\ r_n\searrow 0,\\
&\qquad\qquad r_nk_n\goto\infty,\; y_\pm^{(k_n)}\in\R^3,\; R_\pm^{(k_n)}\in{\rm SO}(3),\; y_\pm^{(k_n)}\goto \yb^\pm,\; R_\pm^{(k_n)}\goto R^\pm \Bigr\},
\end{split}
\end{align*}
$I^-=[-1,-\frac{3}{4}]$ and $I^+=[\frac{3}{4},1]$. 
\begin{rem}
The particular choice 
$$ \mathscr{y}^{(k_n)}(w_1,\xb')
  =\begin{cases}
    R_-^{(k_n)}\T{(r_nw_1,\frac{1}{k_n}\xb')}+y_-^{(k_n)}
    &\text{if } w_1 \le 0, \\ 
    R_+^{(k_n)}\T{(r_nw_1,\frac{1}{k_n}\xb')}+y_+^{(k_n)}
    &\text{if } w_1 > 0
    \end{cases} $$ 
for given $\yb^+,\yb^-\in\R^3$ and $R^-,R^+\in\mathrm{SO}(3)$ shows that, in case \tc{(W8)} holds true, one has $\varphi\le \bar{C}_1$ for some $\bar{C}_1< \infty. $
\end{rem}
We now show that we have $\psi\geq\ph$. Suppose $\e>0$ and that $(\mathscr{y}^{(k_n)})_{n=1}^\infty$ is a sequence $\mathrm{PAff}(\Lambda_{r_n,k_n})$ such that 
\begin{equation}\label{eq:Paff-condn}
||\mathscr{y}^{(k_n)}-y_{\rm PC}||_{L^2(I_\psi^\pm\times S^{\rm ext})}\goto 0,\quad 
||\nabla_{r_n,k_n}\mathscr{y}^{(k_n)}-R_{\rm PC}||_{L^2(I_\psi^\pm\times S^{\rm ext})}\goto 0
\end{equation}
and 
\begin{equation*}
\limsup_{n\goto\infty}\calE_{k_n}(\mathscr{y}^{(k_n)},[-1,1])\leq \psi(\yb^+-\yb^-,(R^-)^{-1}R^+)+\e,
\end{equation*}
where for any $I\subset[-1,1]$ we set
\begin{equation*}
\calE_{k_n}(\mathscr{y}^{(k_n)},I):=\sum_{\substack{w_1 \in  \mathscr{L}_n'(I)\\ \xb' \in \mathcal{L}'^{,\rm ext}}}k_nW_{\rm tot}^{(k_n)}\bigl(\xb',\vec{\mathscr{y}}^{\,(k_n)}(w_1,\xb')\bigr)
\end{equation*}
and $\mathscr{L}_n'(I) = (\frac{1}{2r_nk_n}+\frac{1}{r_nk_n} \Z) \cap I$. The definition of a rod slice in this section reads
$$S_{k_n}(w_1)=\bigl[\bar{w}_1-\frac{1}{2r_nk_n},\bar{w}_1+\frac{1}{2r_nk_n}\bigr)\times\overline{S^{\rm ext}},
\quad\text{where}\quad 
\bar{w}_1 = \frac{1}{r_nk_n} \lfloor r_nk_n w_1 \rfloor + \frac{1}{2r_nk_n}.$$
Our goal now is to find a sequence $\mathscr{v}^{(k_n)}$ which is admissible as a competitor in the definition of $\ph$ and has asymptotically lower energy than $\mathscr{y}^{(k_n)}$. We provide the construction only for $\mathscr{v}^{(k_n)}|_{[-1,0]\times \overline{S^{\rm ext}}}$, as for $\mathscr{v}^{(k_n)}|_{(0,1]\times \overline{S^{\rm ext}}}$ we could proceed analogously. Writing $I_{0,n}^-:=\frac{1}{r_nk_n}(\lfloor -\frac{3}{4}r_nk_n\rfloor+1,\lfloor -\frac{1}{2}r_nk_n\rfloor)$ for a discrete approximation of $I_\psi^-\setminus I^-$ from inside and $N_n^-=\lfloor -\frac{1}{2}r_nk_n\rfloor-\lfloor -\frac{3}{4}r_nk_n\rfloor-3 = \sharp\mathscr{L}'(I_{0,n}^-)-2$ for the number of (interior) slices intersecting $I_{0,n}^-\times \overline{S^{\rm ext}}$, we introduce the sets
\begin{subequations}\label{eq:deGiSets}
\begin{align}
\begin{split}
W_1^{(n)}
&=\Bigl\{w_1\in\mathscr{L}'(I_{0,n}^-);\; w_1\pm\tfrac{i}{r_nk_n} \in\mathscr{L}'(I_{0,n}^-), \\
&\qquad \sum_{i\in\{-1,0,1\}} \sum_{\xb'\in\mathcal{L}'^{,\rm ext}}k_nW_{\rm tot}^{(k_n)}\bigl(\xb',\bar{\nabla}_{r_n,k_n}\mathscr{y}^{(k_n)}(w_1+\tfrac{i}{r_nk_n},\xb')\bigr)\leq\frac{12}{N_n^-}\calE_{k_n}(\mathscr{y}^{(k_n)},I_{0,n}^-)\Bigr\},
\end{split}\label{eq:deGiE}\\
\begin{split}
W_2^{(n)}
&=\Bigl\{w_1\in\mathscr{L}'(I_{0,n}^-);\; w_1\pm\tfrac{i}{r_nk_n} \in\mathscr{L}'(I_{0,n}^-), \\
&\qquad \int_{S_{k_n}(w_1)}|\nabla_{r_n,k_n}\mathscr{y}^{(k_n)}-R^-|^2\md w_1\md\xb'\leq\frac{4}{N_n^-}||\nabla_{r_n,k_n}\mathscr{y}^{(k_n)}-R^-||_{L^2(I_{0,n}^-\times S^{\rm ext};\R^{3\times 3})}^2\Bigr\},\end{split}\label{eq:deGiR}\\
\begin{split}
W_3^{(n)}
&=\Bigl\{w_1\in\mathscr{L}'(I_{0,n}^-);\; w_1\pm\tfrac{i}{r_nk_n} \in\mathscr{L}'(I_{0,n}^-), \\
&\qquad 
\int_{S_{k_n}(w_1)}|\mathscr{y}^{(k_n)}-\yb^-|^2\md w_1\md\xb'\leq\frac{4}{N_n^-}||\mathscr{y}^{(k_n)}-\yb^-||_{L^2(I_{0,n}^-\times S^{\rm ext};\R^3)}^2\Bigr\},\end{split}\label{eq:deGiy}
\end{align}
\end{subequations}
where $\bar{\nabla}_{r_n,k_n}\mathscr{y}^{(k_n)}(w_1,\xb')=k_n(\mathscr{y}^{(k_n)}(\bar{w}_1+\frac{1}{r_nk_n}\zf_1^i,\bar{\xb}'+(\zf^i)')-\sum_{j=1}^8\mathscr{y}^{(k_n)}(\bar{w}_1+\frac{1}{r_nk_n}\zf_1^j,\bar{\xb}'+(\zf^j)'))_{i=1}^8$. The sets $W_i^{(n)}$, $i=1,2,3$, are comprised of the midpoints of the $w_1$-projections of slices on which, \tc{loosely speaking,} a certain quantity is below four times its average.  By Lemma \ref{DeGiorgiTrick} with $p=4$ we see that for every $i\in\{1,2,3\}$ and $n\in\N$, the set $W_i^{(n)}$ contains at least $\lfloor(3/4)N_n^-\rfloor$ elements. The pigeonhole principle then implies that for every $n$ large enough there is $w_-^{(n)}\in W_1^{(n)}\cap W_2^{(n)}\cap W_3^{(n)}$. Since $N_n^-\geq \frac{1}{4}r_nk_n-4$, the inequality in \eqref{eq:deGiE} and the finiteness in \eqref{eq:liminfEbdd} imply an estimate in integral form:
\begin{equation}\label{eq:deGiEint}
\tc{\sum_{i\in\{-1,0,1\}}} r_nk_n\int_{S_{k_n}({\scriptstyle w_-^{(n)}+\frac{i}{r_nk_n}})}k_nW_{\rm tot}^{(k_n)}\bigl(\xb',\bar{\nabla}_{r_n,k_n}\mathscr{y}^{(k_n)}\bigr)\md w_1\md\xb'
\leq \tc{\frac{48}{r_n k_n-16}}\calE_{k_n}(\mathscr{y}^{(k_n)},I_{0,n}^-)
\leq \frac{C_{\rm e}}{r_nk_n}
\end{equation}
for a constant $C_{\rm e}>0$. Hence we can employ the growth assumption on the elastic cell energy $W_0$\tc{, properties of the extension scheme (cf. \eqref{eq:nbhd-rig-est}),} and \cite[Theorem~3.1]{FrM02} (in unrescaled variables) to get $R_-^{(k_n)}\in{\rm SO}(3)$ such that
\begin{equation*}
\frac{1}{C}||\nabla_{r_n,k_n}\mathscr{y}^{(k_n)}-R_-^{(k_n)}||_{L^2(S_{k_n}({\scriptstyle w_-^{(n)}});\R^{3\times 3})}^2\leq \sum_{i\in\{-1,0,1\}} \int_{S_{k_n}({\scriptstyle w_-^{(n)}+\frac{i}{r_nk_n}})}\tc{W_{\rm tot}^{(k_n)}\bigl(\xb'},\bar{\nabla}_{r_n,k_n}\mathscr{y}^{(k_n)}\bigr)\md w_1\md\xb'
\end{equation*}
for a constant $C>0$. Combining the previous inequality with \eqref{eq:deGiEint} we deduce that
\begin{equation}\label{eq:yRrate}
||\nabla_{r_n,k_n}\mathscr{y}^{(k_n)}-R_-^{(k_n)}||_{L^2(S_{k_n}({\scriptstyle w_-^{(n)}});\R^{3\times 3})}=O\Bigl(\frac{1}{r_nk_n^{3/2}}\Bigr).
\end{equation}
Setting
\begin{equation*}
y_-^{(k_n)}=\dashint_{S_{k_n}({\scriptstyle w_-^{(n)}})}\mathscr{y}^{(k_n)}(w_1,\xb')-R_-^{(k_n)}\T{\bigl(r_nw_1,\frac{1}{k_n}\xb'\bigr)}\md w_1\md\xb',
\end{equation*}
we achieve that a Poincaré inequality is satisfied, with a $C>0$:
\begin{align}\label{eq:yRPoinc}
\begin{split}
\MoveEqLeft 
\sqrt{\int_{S_{k_n}({\scriptstyle w_-^{(n)}})}|\mathscr{y}^{(k_n)}(w_1,\xb') - {\displaystyle R_-^{(k_n)}} \T{\bigl(r_nw_1,\frac{1}{k_n}\xb'\bigr)}-{\displaystyle y_-^{(k_n)}}|^2\md w_1\md\xb'
}\\
&\leq C\frac{1}{k_n}||\nabla_{r_n,k_n}\mathscr{y}^{(k_n)}-R_-^{(k_n)}||_{L^2(S_{k_n}({\scriptstyle w_-^{(n)}});\R^{3\times 3})}.
\end{split}
\end{align}
Define $\mathscr{v}^{(k_n)}\colon [-1,0]\times \overline{S^{\rm ext}}\goto \R^3$ as follows:
\begin{equation*}
\mathscr{v}^{(k_n)}(w_1,\xb')
=\begin{cases}
R_-^{(k_n)}\T{(r_nw_1, \frac{1}{k_n}\xb')}+y_-^{(k_n)}&-1 \leq w_1\leq w_-^{(n)}-\frac{1}{2r_nk_n}\\[1ex]
\text{pcw. affine (24 simplices/cell)}& 
w_-^{(n)}-\frac{1}{2r_nk_n}<w_1<w_-^{(n)}+\frac{1}{2r_nk_n}\\
\mathscr{y}^{(k_n)}(w_1,\xb')& 0 \geq w_1\geq w_-^{(n)}+\frac{1}{2r_nk_n}.
\end{cases}
\end{equation*}
We claim that
\begin{align}
\limsup_{n\goto\infty}\calE_{k_n}\bigl(\mathscr{v}^{(k_n)},[-1,0)\bigr)&\leq \limsup_{n\goto\infty}\calE_{k_n}\bigl(\mathscr{y}^{(k_n)},[-1,0)\bigr),\label{eq:forFleqPs}\\
\lim_{n\goto\infty} y_-^{(k_n)}=\yb^-,&\quad \lim_{n\goto\infty} R_-^{(k_n)}=R^-.\label{eq:convBC}
\end{align}
Concerning \eqref{eq:forFleqPs}, we notice that for all $n\in\N$,
\begin{equation*}
\calE_{k_n}\Bigl(\mathscr{y}^{(k_n)},\bigl(w_-^{(n)}+\frac{1}{2r_nk_n},0\bigl)\Bigr)=\calE_{k_n}\Bigl(\mathscr{v}^{(k_n)},\bigl(w_-^{(n)}+\frac{1}{2r_nk_n},0\bigr)\Bigr)
\end{equation*}
and that $\calE_{k_n}(\mathscr{v}^{(k_n)},(-1,w_-^{(n)}-\frac{1}{2r_nk_n}))=0$ since $\bar{\nabla}_{r_n,k_n}\mathscr{v}^{(k_n)}=R_-^{(k_n)}\bar{\Id}\in \bar{\rm SO}(3)$ on $(-1,w_-^{(n)}-\frac{1}{2r_nk_n})\times S^{\rm ext}$. Hence it remains to show that the energy on the transition slice $S_{k_n}(w_-^{(n)})$ vanishes in the limit.
\begin{lemma} The following is true:
\begin{equation*}
\lim_{n\goto\infty}\calE_{k_n}\Bigl(\mathscr{y}^{(k_n)},w_-^{(n)}+\frac{1}{2r_nk_n}\bigl(-1,1\bigr)\Bigr)+\calE_{k_n}\Bigl(\mathscr{v}^{(k_n)},w_-^{(n)}+\frac{1}{2r_nk_n}\bigl(-1,1\bigr)\Bigr)=0.
\end{equation*}
\end{lemma}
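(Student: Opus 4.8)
The plan is to estimate the two slice energies separately. Throughout abbreviate the transition slice by $S_n:=S_{k_n}(w_-^{(n)})$ and observe that $w_-^{(n)}+\tfrac1{2r_nk_n}(-1,1)$ is (up to endpoints, which carry no lattice points) the $w_1$-extent of $S_n$, so both energies in the claim are sums of the $\sharp\mathcal{L}^{',\mathrm{ext}}$ cell energies attached to $S_n$. For the $\mathscr{y}^{(k_n)}$-term I would argue directly from the De Giorgi-type selection that produced $w_-^{(n)}$: by frame-indifference (W1) the energy $\calE_{k_n}\bigl(\mathscr{y}^{(k_n)},w_-^{(n)}+\tfrac1{2r_nk_n}(-1,1)\bigr)=\sum_{\xb'\in\mathcal{L}^{',\mathrm{ext}}}k_nW_{\rm tot}^{(k_n)}\bigl(\xb',\bar{\nabla}_{r_n,k_n}\mathscr{y}^{(k_n)}(w_-^{(n)},\xb')\bigr)$ is exactly the $i=0$ summand of \eqref{eq:deGiE}, so, since $w_-^{(n)}\in W_1^{(n)}$ and $W_{\rm tot}^{(k_n)}\ge0$, it is $\le\tfrac{12}{N_n^-}\calE_{k_n}(\mathscr{y}^{(k_n)},I_{0,n}^-)$, which tends to $0$ because $\calE_{k_n}(\mathscr{y}^{(k_n)},I_{0,n}^-)\le\calE_{k_n}(\mathscr{y}^{(k_n)},[-1,1])$ is asymptotically bounded by $\psi(\yb^+-\yb^-,(R^-)^{-1}R^+)+\e$ while $N_n^-\ge\tfrac14r_nk_n-4\to\infty$.

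The substantive part is the $\mathscr{v}^{(k_n)}$-term. Writing $g_n(w_1,\xb'):=R_-^{(k_n)}\T{(r_nw_1,\tfrac1{k_n}\xb')}+y_-^{(k_n)}$ for the affine map used in the left branch of $\mathscr{v}^{(k_n)}$, I would first unwind the transition cell: the four corners of $\vec{\mathscr{v}}^{\,(k_n)}(w_-^{(n)},\xb')$ with $\zf^i_1=-\tfrac12$ carry the rescaled rigid values $k_ng_n(\cdot)=R_-^{(k_n)}\zf^i+b_0$ with $b_0\in\R^3$ independent of $i$, whereas the four corners with $\zf^i_1=+\tfrac12$ carry $k_n\mathscr{y}^{(k_n)}(\cdot)=R_-^{(k_n)}\zf^i+b_0+E_i$, where $E_i:=k_n(\mathscr{y}^{(k_n)}-g_n)$ evaluated at the $i$-th corner of that cell. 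Since the all-rigid configuration has discrete gradient $R_-^{(k_n)}\bar{\Id}$ and $\bar{\nabla}$ annihilates $V_0$, this gives $|\bar{\nabla}\vec{\mathscr{v}}^{\,(k_n)}(w_-^{(n)},\xb')-R_-^{(k_n)}\bar{\Id}|\le C\max_i|E_i|$. To bound $|E_i|$ I would use that $\mathscr{y}^{(k_n)}-g_n$ is continuous and affine on each of the $24$ simplices of the cell centred at $(w_-^{(n)},\xb')$, so an inverse estimate for piecewise affine functions yields $|E_i|\le k_n||\mathscr{y}^{(k_n)}-g_n||_{L^\infty}\le Ck_n\sqrt{r_nk_n}\,||\mathscr{y}^{(k_n)}-g_n||_{L^2(S_n)}$; combining this with the Poincaré estimate \eqref{eq:yRPoinc} and the rigidity rate \eqref{eq:yRrate} gives
\[
|E_i|\ \le\ Ck_n\sqrt{r_nk_n}\,\cdot\,\frac{C}{k_n}\,||\nabla_{r_n,k_n}\mathscr{y}^{(k_n)}-R_-^{(k_n)}||_{L^2(S_n)}\ =\ O\!\left(\frac{1}{\sqrt{r_nk_n}\,\sqrt{k_n}}\right)\ =\ o\bigl(k_n^{-1/2}\bigr)
\]
uniformly in $\xb'$, because $r_nk_n\to\infty$. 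Hence $\mathrm{dist}(\bar{\nabla}\vec{\mathscr{v}}^{\,(k_n)}(w_-^{(n)},\xb'),\bar{\rm SO}(3))=o(k_n^{-1/2})$; as $c_{\rm frac}^{(k_n)}$ is of order $k_n^{-1/2}$ by (W3), for $n$ large every transition cell lies in the elastic regime, so the quadratic bound $W_{\rm tot}^{(k_n)}(\xb',\cdot)\le c_{\rm w}\,\mathrm{dist}^2(\bar{\nabla}\hat y,\bar{\rm SO}(3))$ valid there (cf.\ the remark following (W7)) yields $W_{\rm tot}^{(k_n)}(\xb',\vec{\mathscr{v}}^{\,(k_n)}(w_-^{(n)},\xb'))=o(k_n^{-1})$. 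Summing over the $\sharp\mathcal{L}^{',\mathrm{ext}}$ cells then gives $\calE_{k_n}\bigl(\mathscr{v}^{(k_n)},w_-^{(n)}+\tfrac1{2r_nk_n}(-1,1)\bigr)=\sharp\mathcal{L}^{',\mathrm{ext}}\cdot o(1)\to0$, which together with the first part proves the lemma.

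I expect the main obstacle to be the bookkeeping of the three relevant scales — the cell width $\tfrac1{r_nk_n}$ in $w_1$, the cell width $1$ in $\xb'$, and the rescaling by $k_n$ built into $\vec{\mathscr{v}}^{\,(k_n)}$ — so as to check that the hypothesis $r_nk_n\to\infty$ produces exactly the decay $o(k_n^{-1/2})$, which just beats the fracture threshold $c_{\rm frac}^{(k_n)}\sim k_n^{-1/2}$ and thereby upgrades the a priori $\mathrm{dist}^2$-control into a slice energy that still vanishes after multiplication by $k_n$. A minor point to verify is that the inverse estimate for affine functions on the strongly elongated simplices of the interpolation has a constant independent of their aspect ratio, which holds because the $L^\infty$-to-$L^2$ comparison on a fixed finite-dimensional polynomial space is invariant under affine changes of variables.
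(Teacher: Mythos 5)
Your proof is correct, and for the $\mathscr{v}^{(k_n)}$-term it is essentially the paper's Steps~2--3: estimate the deviation of $\mathscr{y}^{(k_n)}$ from the rigid map $g_n$ at the cell corners via Poincar\'e \eqref{eq:yRPoinc} and the rigidity rate \eqref{eq:yRrate}, conclude $\mathrm{dist}(\bar{\nabla}_{r_n,k_n}\mathscr{v}^{(k_n)},\bar{\rm SO}(3))=O((\sqrt{r_n}k_n)^{-1})=o(c_{\rm frac}^{(k_n)})$ because $r_nk_n\to\infty$, and then use the quadratic bound $W_{\rm tot}^{(k_n)}\le c_{\rm w}\,\mathrm{dist}^2$ valid in the elastic regime. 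The only technical substitution there is that you pass from $L^2$ to the corner values through a piecewise-affine $L^\infty$--$L^2$ inverse estimate on the elongated simplices (your remark that its constant is aspect-ratio independent is correct, by the affine change-of-variables argument), whereas the paper routes through the cell-average identity \eqref{eq:surfVolMean} and H\"older plus the discrete inverse estimate of \cite[Lemma~3.5]{BS09}; both give the same $O((\sqrt{r_n}k_n)^{-1})$ bound. Where you genuinely deviate is the $\mathscr{y}^{(k_n)}$-term: you observe that, by frame-indifference, the slice energy of $\mathscr{y}^{(k_n)}$ at $w_-^{(n)}$ is exactly the $i=0$ summand in the De~Giorgi condition \eqref{eq:deGiE} defining $W_1^{(n)}$, hence already bounded by $\tfrac{12}{N_n^-}\calE_{k_n}(\mathscr{y}^{(k_n)},I_{0,n}^-)\to0$. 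This replaces the paper's Step~1 (which shows $\bar{\nabla}_{r_n,k_n}\mathscr{y}^{(k_n)}$ is itself $O((\sqrt{r_n}k_n)^{-1})$-close to $R_-^{(k_n)}\bar{\Id}$ and then feeds that into the quadratic bound in Step~3) with a shorter re-use of the De~Giorgi selection; it buys brevity and one fewer dependence on \cite{BS09}, while the paper's version treats the two terms symmetrically and makes explicit that both cells land in the elastic regime.
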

\begin{proof}The proof is divided into several steps. Let $Q=[w_-^{(n)}-\frac{1}{2r_nk_n},w_-^{(n)}+\frac{1}{2r_nk_n}]\times Q'$, where $Q'=\xb'+[-\frac{1}{2},\frac{1}{2}]^2$ for some \tc{$\xb'\in\mathcal{L}'^{,\mathrm{ext}}$}, be any atomic cell contained in the slice $\overline{S_{k_n}}(w_-^{(n)})$.

\textit{Step} 1. Using \cite[Lemma~3.5]{BS09} and \eqref{eq:yRrate}, we can obtain the relation
\begin{equation}\label{eq:y-Rkn-minus}
c|\bar{\nabla}_{r_n,k_n}\mathscr{y}^{(k_n)}(w_-^{(n)},\xb')-R_-^{(k_n)}\bar{\Id}|^2
\leq r_nk_n\int_{Q}|\nabla_{r_n,k_n}\mathscr{y}^{(k_n)}-R_-^{(k_n)}|^2\md w_1\tc{\md w'}=O\Bigl(\frac{1}{r_nk_n^2}\Bigr)
\end{equation}
with a constant $c>0$.

\textit{Step} 2. We now compare $\vec{\mathscr{y}}^{\,(k_n)}(w_1,\xb')$ and $\vec{\mathscr{v}}^{\,(k_n)}(w_1,\xb')$. By construction we have $[\vec{\mathscr{y}}^{(k_n)}]_{\cdot i}=[\vec{\mathscr{v}}^{(k_n)}]_{\cdot i}$ for $i=5,6,7,8$ and from Step 1 we get, for $i=1,2,3,4$,
\begin{align*}
\begin{split}
\MoveEqLeft{} 
[\vec{\mathscr{y}}^{(k_n)}(w_1,\xb')]_{\cdot i}-[\vec{\mathscr{v}}^{(k_n)}(w_1,\xb')]_{\cdot i} \\
&=\Bigl|k_n\Bigl(\mathscr{y}^{(k_n)}\bigl(w_-^{(n)}+\frac{1}{r_nk_n}\zf_1^i,\xb'+(\zf^i)'\bigr)-R_-^{(k_n)}\T{\bigl(r_nw_-^{(n)}+\frac{1}{k_n}\zf_1^i,\; 
\frac{1}{k_n}(\xb+\zf^i)'\bigr)}-y_-^{(k_n)}\Bigr)\Bigr|\\
&\leq \underbrace{\bigl|[\bar{\nabla}_{r_n,k_n}\mathscr{y}^{(k_n)}(w_-^{(n)},\xb')]_{\cdot i}-R_-^{(k_n)}\zf^i\bigr|}_{=O(r_n^{-1/2}k_n^{-1})}
+k_n\bigl|\langle\mathscr{y}^{(k_n)}\rangle-R_-^{(k_n)}\T{\bigl(r_nw_-^{(n)},\frac{1}{k_n}\xb'\bigr)}-y_-^{(k_n)}\bigr|.
\end{split}
\end{align*}
Property \eqref{eq:surfVolMean} of our piecewise affine interpolation, Hölder's inequality, \eqref{eq:yRPoinc} and \eqref{eq:yRrate} give
\begin{align*}
\MoveEqLeft
k_n\bigl|\langle\mathscr{y}^{(k_n)}\rangle-R_-^{(k_n)}\T{\bigl(r_nw_-^{(n)},\frac{1}{k_n}\xb'\bigr)}-y_-^{(k_n)}\bigr|\\
&=r_nk_n^2\Bigl|\int\limits_{Q}\mathscr{y}^{(k_n)}(w)-R_-^{(k_n)}\T{\bigl(r_nw_1,\frac{1}{k_n} w'\bigr)}-y_-^{(k_n)}\md w_1\md w'\Bigr|\\
&\leq C\sqrt{|Q|}r_nk_n^2\frac{1}{k_n}||\nabla_{r_n,k_n}\mathscr{y}^{(k_n)}-R_-^{(k_n)}||_{L^2(S_{k_n}({\scriptstyle w_-^{(n)}});\R^{3\times 3})}=O\Bigl(\frac{1}{\sqrt{r_n}k_n}\Bigr)
\end{align*}
so that $|\vec{\mathscr{y}}^{\,(k_n)}(w_1,\xb')-\vec{\mathscr{v}}^{\,(k_n)}(w_1,\xb')|=O(r_n^{-1/2}k_n^{-1})$ and, in particular, 
\begin{align*}
|\bar{\nabla}_{r_n,k_n}\mathscr{y}^{(k_n)}(w_-^{(n)},\xb') - \bar{\nabla}_{r_n,k_n}\mathscr{v}^{(k_n)}(w_-^{(n)},\xb')|
=O\Bigl(\frac{1}{\sqrt{r_n}k_n}\Bigr)
\end{align*}
since $\bar{\nabla}_{r_n,k_n}\mathscr{y}^{(k_n)}(w_-^{(n)},\xb')=\vec{\mathscr{y}}^{\,(k_n)}(w_1,\xb')-\frac{1}{8}\sum_{i=1}^8 [\vec{\mathscr{y}}^{\,(k_n)}(w_1,\xb')]_{\cdot i}(1, \ldots, 1)$ and likewise for $\mathscr{v}^{(k_n)}$. Together with \eqref{eq:y-Rkn-minus} this shows that also $\mathscr{v}^{(k_n)}$ satisfies 
\begin{equation}\label{eq:v-Rkn-minus}
\tc{|\bar{\nabla}_{r_n,k_n}\mathscr{v}^{(k_n)}(w_-^{(n)},\xb')-R_-^{(k_n)}\bar{\Id}|}
=O\Bigl(\frac{1}{\sqrt{r_n}k_n}\Bigr).
\end{equation}

\textit{Step} 3. Now we use that $W_{\rm tot}^{(k_n)}$ is independent of $k_n$ on a tubular neighbourhood of $\mathrm{SO}(3)$ of size $O(k_n^{-1})$ and, by Taylor expansion, satisfies an estimate of the form $W_{\rm tot}^{(k_n)} \le C \mathrm{dist}^2(\cdot, \mathrm{SO}(3))$ there. Thus, \eqref{eq:y-Rkn-minus} and \eqref{eq:v-Rkn-minus} give 
\begin{align*}
 k_nW_{\rm tot}^{(k_n)}\bigl(\xb',\bar{\nabla}_{r_n,k_n}\mathscr{y}^{(k_n)}\bigr)
+ k_nW_{\rm tot}^{(k_n)}\bigl(\xb',\bar{\nabla}_{r_n,k_n}\mathscr{v}^{(k_n)}\bigr) 
= O\Bigl(\frac{1}{r_nk_n}\Bigr).
\end{align*}
This implies the assertion.
\end{proof}

The second convergence in \eqref{eq:convBC} is a consequence of \eqref{eq:deGiR}, \eqref{eq:Paff-condn}, and \eqref{eq:yRrate}:
\begin{align*}
|R_-^{(k_n)}-R^-|^2
&=\frac{r_nk_n}{|S^{\rm ext}|}\int_{S_{k_n}({\scriptstyle w_-^{(n)}})}|R_-^{(k_n)}-R^-|^2\md w_1\md\xb' \\
&\leq\frac{2r_nk_n}{|S^{\rm ext}|}\Bigl(\int_{S_{k_n}({\scriptstyle w_-^{(n)}})}|R^--\nabla_{r_n,k_n}\mathscr{y}^{(k_n)}|^2\md w_1\md\xb'+\int_{S_{k_n}({\scriptstyle w_-^{(n)}})}|R_-^{(k_n)}-\nabla_{r_n,k_n}\mathscr{y}^{(k_n)}|^2\md w_1\md\xb'\Bigr) \\
&\leq\frac{2r_nk_n}{|S^{\rm ext}|}\cdot\frac{4}{\tc{\frac{1}{4}r_nk_n-4}}||\nabla_{r_n,k_n}\mathscr{y}^{(k_n)}-R^-||_{L^2(I_{0,n}^-\times S;\R^{3\times 3})}^2+O\Bigl(\frac{1}{r_nk_n^2}\Bigr)\longrightarrow 0.
\end{align*}
The first convergence in \eqref{eq:convBC} follows similarly from \eqref{eq:deGiy} \tc{and \eqref{eq:Paff-condn}} if we use \eqref{eq:yRPoinc} and \eqref{eq:yRrate} to show that
\begin{align*}
\MoveEqLeft\frac{2r_nk_n}{|S^{\rm ext}|}\int\limits_{S_{k_n}({\scriptstyle w_-^{(n)}})}|y_-^{(k_n)}-\mathscr{y}^{(k_n)}|^2\md w_1\md\xb' \\ 
&\leq C\bigl[r_n||\nabla_{r_n,k_n}\mathscr{y}^{(k_n)}-R_-^{(k_n)}||_{L^2(S_{k_n}({\scriptstyle w_-^{(n)}});\R^{3\times 3})}^2+|R_-^{(k_n)}|^2r_nk_n\frac{1}{|\tc{S^{\rm ext}}|r_nk_n}\bigl|\bigl(r_n,\frac{1}{k_n},\frac{1}{k_n}\bigr)\bigr|^2\bigr]\longrightarrow 0,
\end{align*}
with a constant $C>0$.

In the same way, we could construct $(R_+^{(k_n)})_{n=1}^\infty$, $(y_+^{(k_n)})_{n=1}^\infty$, and $\mathscr{v}^{(k_n)}|_{(0,1]\times S^{\rm ext}}$ and prove a version of \eqref{eq:forFleqPs}--\eqref{eq:convBC} on $(0,1]$. Thus, as
\begin{align*}
\ph\bigl(\yb^+-\yb^-,(R^-)^{-1}R^+\bigr)
&\leq \limsup_{n\goto\infty} \calE_{k_n}\bigl(\mathscr{v}^{(k_n)},[-1,1]\bigr)\\
&\leq \limsup_{n\goto\infty} \calE_{k_n}\bigl(\mathscr{y}^{(k_n)},[-1,1]\bigr) 
\leq\psi\bigl(\yb^+-\yb^-,(R^-)^{-1}R^+\bigr)+\e
\end{align*}
and $\e>0$ was arbitrary, the claim that $\ph\leq\psi$ is proved.
\begin{lemma}\label{DeGiorgiTrick}
Let $c_1$, $c_2$, $\dots$, $c_N$ be nonnegative reals and $p\geq 1$. Then
$$\sharp\Bigl\{i\in\{1,\dots,N\};\;c_i\leq\frac{p}{N}\sum_{j=1}^N c_j\Bigr\}>\bigl\lfloor\bigl(1-\frac{1}{p}\bigr)N\bigr\rfloor.$$
\end{lemma}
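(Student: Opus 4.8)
The plan is to apply the elementary averaging (``De Giorgi'') argument: partition $\{1,\dots,N\}$ into the set of indices on which $c_i$ stays below $p$ times the average and its complement, and control the size of the complement by comparing its contribution to the sum with the total sum $\sum_j c_j$.

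In detail, I would set $S:=\sum_{j=1}^N c_j$ and introduce the threshold $m:=\frac pN S$, so that $G:=\{i;\;c_i\le m\}$ is exactly the set whose cardinality must be estimated from below and $B:=\{1,\dots,N\}\setminus G=\{i;\;c_i>m\}$ its complement. It then suffices to show $\sharp B<N/p$, since this yields $\sharp G=N-\sharp B>N-\frac Np=(1-\frac1p)N\ge\lfloor(1-\frac1p)N\rfloor$, which already gives the claim: $\sharp G$ is an integer strictly exceeding the real number $(1-\frac1p)N$, hence it strictly exceeds its floor.

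To bound $\sharp B$, note first that if $B=\emptyset$ there is nothing to prove, so assume $\sharp B\ge 1$. Dropping the (nonnegative) terms indexed by $G$ and using $c_i>m$ for each $i\in B$ gives $S\ge\sum_{i\in B}c_i>\sharp B\cdot m$, the last inequality being strict because $B$ is non-empty. If $S>0$, dividing by $S$ and inserting $m=\frac pN S$ yields $1>\sharp B\cdot\frac pN$, i.e.\ $\sharp B<N/p$. If $S=0$, then $m=0$ and every $c_i\ge 0$ satisfies $c_i\le 0=m$, so $B=\emptyset$ and $\sharp G=N$, which again trivially exceeds $\lfloor(1-\frac1p)N\rfloor$.

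I do not anticipate any genuine difficulty here; the only points requiring care are the degenerate cases $S=0$ and $B=\emptyset$, where the chain of strict inequalities above would otherwise be vacuous, and the final passage to the floor, which is legitimate precisely because the intermediate bound $\sharp G>(1-\frac1p)N$ is strict.
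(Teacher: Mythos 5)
Your proof is correct and takes essentially the same approach as the paper: both are the standard averaging estimate bounding $\sharp B$ by $N/p$. The paper phrases it as a proof by contradiction (assuming $\sharp B\ge N/p$ and deriving $\bar c>\bar c$), whereas you argue directly and handle the degenerate case $S=0$ explicitly, but the underlying inequality is identical.
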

\begin{proof}
We denote by $\bar{c}$ the average $N^{-1}\sum_j c_j$. If the statement were not true, the number of $c_j$'s such that $c_j>p\bar{c}$ would be greater than or equal to $N/p$. Hence
$$\bar{c} \ge \frac{1}{N}\sum_{j;\; c_j>p\bar{c}}c_j>\frac{1}{N}p\bar{c}\frac{N}{p}=\bar{c},$$
but that is a contradiction.
\end{proof}

Summing up the elastic and crack energy contributions, we get
\begin{multline*}
\lim_{k\goto\infty}kE^{(k)}(y^{(k)})\geq\liminf_{k\goto\infty}k\Bigl[\sum_{i=0}^{\bar{n}_{\rm f}}\sum_{\substack{\hat{x}\in\hat{\Lambda}_k'^{,\mathrm{ext}}\\\hat{x}_1\in k[\sigma^i+\eta,\sigma^{i+1}-\eta]}}W_{\rm tot}^{(k)}\left(\hat{x}',\vec{y}^{\,(k)}(\hat{x})\right)\\
+\sum_{i=1}^{\bar{n}_{\rm f}}\sum_{\substack{\hat{x}\in\hat{\Lambda}_k'^{,\mathrm{ext}}\\\hat{x}_1\in k(\sigma^i-\eta,\sigma^i+\eta)}}W_{\rm tot}^{(k)}\left(\hat{x}',\vec{y}^{\,(k)}(\hat{x})\right)\Bigr]\\
\geq\sum_{i=0}^{\bar{n}_{\rm f}}\frac{1}{2}\int\limits_{\sigma^i+\eta}^{\sigma^{i+1}-\eta} Q_3^{\rm rel}(\T{R}\pl_{\xb_1}R)\md\xb_1+\sum_{\sigma\in S_{\yb}\cup S_R}\varphi\bigl(\yb(\sigma+)-\yb(\sigma-),(R(\sigma-))^{-1}R(\sigma+)\bigr).
\end{multline*}
To obtain the $\Gamma$-liminf inequality, we apply the monotone convergence theorem with $\eta\goto 0+$.

\section{Proof of the upper bound}
For a construction of recovery sequences it is crucial to first analyze the cell formula more precisely. In particular, we will need to prove that the crack set is essentially localized on the atomic scale. 

\subsection{Analysis of the cell formula}

\begin{lemma}[localization of crack]\label{rigCrack}
Let $\yb^-,\yb^+\in\R^3$ and $R^-,R^+\in{\rm SO}(3)$. Then for any $\e_*>0$, there is an $N_*\in\N$, sequences $\{k_n\}_{n=1}^\infty\subset\N$, $\{r_n\}\subset (0,\infty)$ and mappings $\yp{k_n}\in\mathrm{PAff}(\Lambda_{r_n,k_n})$, $n\in\N$, with the following properties:
\begin{align}\label{eq:yPlusE}
\limsup_{n\goto\infty}\calE_{k_n}(\yp{k_n},[-1,1])&\leq \ph\bigl(\yb^+-\yb^-,(R^-)^{-1}R^+\bigr)+\e_*,
\end{align}
$r_n\searrow 0$, $r_nk_n\goto\infty$, and, for suitable $\ypp{\pm}{k_n}\in\R^3,\; \Rp{\pm}{k_n}\in{\rm SO}(3)$ with $\ypp{\pm}{k_n}\goto \yb^\pm$, $\Rp{\pm}{k_n}\goto R^\pm$,
\begin{equation*}
\yp{k_n}(w_1,\xb')=\begin{cases}
\Rp{-}{k_n}\T{\bigl(r_nw_1,\frac{\xb'}{k_n}\bigr)}+\ypp{-}{k_n} & \text{ on }\bigl([-1,0]\setminus I_{\rm c}^{(n)}\bigr)\times \overline{S^{\rm ext}},\\
\Rp{+}{k_n}\T{\bigl(r_nw_1,\frac{\xb'}{k_n}\bigr)}+\ypp{+}{k_n} & \text{ on }\bigl((0,1]\setminus I_{\rm c}^{(n)}\bigr)\times \overline{S^{\rm ext}},
\end{cases}
\end{equation*}
where $I_{\rm c}^{(n)}=\frac{1}{r_nk_n}[-N_*,N_*]$.
\end{lemma}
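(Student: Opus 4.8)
The plan is to start from an optimal competitor in the cell formula \eqref{eq:phi} and to ``fold away'' all of its intact parts, so that only a bounded number of atomic cells near the origin remain non-rigid, while keeping the total energy under control. Since the minimum in \eqref{eq:phi} is attained we may fix sequences $r_n\searrow0$, $k_n$ with $r_nk_n\to\infty$ and $\mathscr{y}^{(k_n)}\in\mathrm{PAff}(\Lambda_{r_n,k_n})$ that equal rigid motions $R_\pm^{(k_n)}\T{(r_n\,\cdot\,,\tfrac1{k_n}\,\cdot\,)}+y_\pm^{(k_n)}$ on $I^\pm\times\overline{S^{\rm ext}}$ with $R_\pm^{(k_n)}\to R^\pm$, $y_\pm^{(k_n)}\to\yb^\pm$ and $\limsup_n\calE_{k_n}(\mathscr{y}^{(k_n)},[-1,1])=\ph:=\ph(\yb^+-\yb^-,(R^-)^{-1}R^+)$. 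If $\ph=0$ the rigid competitor $R_-^{(k_n)}\T{(r_n\,\cdot\,,\tfrac1{k_n}\,\cdot\,)}+y_-^{(k_n)}$ already works and if $\ph=+\infty$ the piecewise rigid competitor from the remark following \eqref{eq:phi} does, both with $N_*=1$; so assume $0<\ph<\infty$. Exactly as in Theorem~\ref{comp}, each $\mathscr{y}^{(k_n)}$ has at most $C_{\rm f}$ broken cells; passing to a subsequence, their number, the number $q\le C_{\rm f}$ of their maximal ``broken blocks'' $B_1^{(n)},\dots,B_q^{(n)}$ (ordered from left to right) and the structure of the complementary intact segments $S_0^{(n)},\dots,S_q^{(n)}$ (the outer two containing $I^-$, resp.\ $I^+$) are constant in $n$.

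A \emph{drift estimate} is the first ingredient: two intact cells at neighbouring $w_1$-positions share a face of four atoms, so the rotations in $\mathrm{SO}(3)$ best approximating their discrete gradients differ by $\mathcal{O}\bigl(\sqrt{(e+e')/k_n}\bigr)$, where $e,e'$ are the respective cell energies in $\calE_{k_n}$ (here one uses the non-degeneracy (W5) and \eqref{eq:nbhd-rig-est}). Summing along an intact segment with $m\le r_nk_n$ cells and using $\sum e\le\ph+o(1)$ together with Cauchy--Schwarz, the best-approximating rotation drifts by $\mathcal{O}(\sqrt{m/k_n})=\mathcal{O}(\sqrt{r_n})=o(1)$ over the whole segment, no matter how macroscopically long it is. Consequently, on each $S_\ell^{(n)}$ the deformation is $o(1)$-close to a single (approximate) rigid motion, and on $S_0^{(n)}$ (resp.\ $S_q^{(n)}$) it is moreover \emph{exactly} $R_-^{(k_n)}$-rigid (resp.\ $R_+^{(k_n)}$-rigid) on $I^-$ (resp.\ $I^+$).

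Next comes the \emph{folding}. Fix a small threshold $\delta=\delta(\e_*,C_{\rm f})>0$. A pigeonhole argument shows that within $\bar m:=\lceil(\ph+1)/\delta\rceil+1$ cells on either side of each broken block there is a ``$\delta$-good'' slice with per-cell energy below $\delta$, on which $\mathscr{y}^{(k_n)}$ is close to its local rigid motion with the rate of \eqref{eq:yRrate}--\eqref{eq:y-Rkn-minus}, the constant now carrying a factor $\sqrt\delta$. For each bounded intact segment $S_\ell^{(n)}$, $1\le\ell\le q-1$, pick such good slices near its two ends; the part of $S_\ell^{(n)}$ between them is nearly rigid, so its two bounding atomic layers are $\mathcal{O}(\sqrt{\delta/k_n})$-close to being congruent via a unique rigid motion $G_\ell^{(n)}$ whose rotation part is $I+\mathcal{O}(\sqrt{r_n})$ and whose translation part is $\mathcal{O}(r_n)$. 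Now \emph{delete} this part: shift everything lying to its right in the $w_1$-direction by the integer number of cells it spans --- this merely re-indexes cells --- and compose that right portion with $G_\ell^{(n)}$ --- which changes no cell energy by frame indifference (W1). By the choice of $G_\ell^{(n)}$ the two sides now agree up to $\mathcal{O}(\sqrt{\delta/k_n})$ at the left good slice, and a single piecewise affine transition cell inserted there, exactly as in \eqref{eq:forFleqPs} and the lemma of Subsection~\ref{sec:finCell}, carries energy $\mathcal{O}(\delta)$. The outer segments $S_0^{(n)}$ and $S_q^{(n)}$ are treated likewise, folding away their nearly-rigid middle between the region where they are already exactly rigid and the collar of $\bar m$ cells adjacent to $B_1^{(n)}$, resp.\ $B_q^{(n)}$, and filling the gaps opened at $w_1=\pm1$ with rigid extensions of the (possibly slightly re-translated) boundary motions $\Rp\pm{k_n}\T{(r_n\,\cdot\,,\tfrac1{k_n}\,\cdot\,)}+\ypp\pm{k_n}$. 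After the $q{+}1$ foldings only the broken blocks, the $\mathcal{O}(q\bar m)$ collar cells and the $\mathcal{O}(q)$ transition cells remain non-rigid; translating in $w_1$ by a multiple of $\tfrac1{r_nk_n}$ places them inside $I_{\rm c}^{(n)}=\tfrac1{r_nk_n}[-N_*,N_*]$ for $N_*:=q(2\bar m+C_{\rm f})+\mathcal{O}(q)$, which depends only on $\e_*$ and $C_{\rm f}$. The retained cells carry at most the original energy $\ph+o(1)$, the transition cells carry $\mathcal{O}(q\delta)$, and the folded cells carry $0$, so for $\delta$ small and $n$ large $\limsup_n\calE_{k_n}(\yp{k_n},[-1,1])\le\ph+\e_*$; finally $\Rp\pm{k_n}$ and $\ypp\pm{k_n}$ differ from $R_\pm^{(k_n)}$, $y_\pm^{(k_n)}$ by products of the $G_\ell^{(n)}$ and by the re-indexing translations, all $o(1)$, hence still converge to $R^\pm$, $\yb^\pm$.

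The main obstacle is the folding step. One cannot simply replace an intact segment by an exact rigid motion and glue it to the original deformation next to a broken block, because there the discrete gradient need only be $c^{(k)}_{\rm frac}\sim k^{-1/2}$-close to $\mathrm{SO}(3)$, and the induced mismatch would make a transition cell cost $\mathcal{O}(1)$ rather than $o(1)$ --- too much when it is repeated $\mathcal{O}(C_{\rm f})$ times. This is resolved by (i) never touching the cells adjacent to the broken blocks, (ii) performing transitions only at $\delta$-good slices a bounded distance away, where the rate is small enough that each transition cell costs only $\mathcal{O}(\delta)$, and (iii) carrying out the deletion as an integer-cell reparametrisation \emph{composed with the segment's holonomy} $G_\ell^{(n)}$, which --- thanks to (W1) --- leaves the energies of all remaining cells, in particular of the broken blocks, exactly invariant, the holonomy staying $o(1)$-close to trivial precisely by the drift estimate. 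The other subtlety is that drift estimate itself: it is what guarantees that a bounded-energy intact segment is uniformly $o(1)$-close to a single rigid motion irrespective of its length, without which neither the congruence of the bounding layers nor the near-triviality of $G_\ell^{(n)}$ would hold.
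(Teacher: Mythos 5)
Your proposal is correct and follows essentially the same strategy as the paper's Procedure~(\texttt{R}) followed by the cut-and-shift step: you identify broken blocks, pigeonhole for low-energy transition slices near their ends, establish that the rotation drift across any intact segment is $O(\sqrt{r_n})$ (the paper obtains this by iterating \eqref{eq:Rkneib} to a `pointwise curvature estimate' $|R_+^{(i,k_n)}-R_-^{(i,k_n)}|^2=O(r_n)$), and then remove the bulk of each intact segment by composing the right portion with the segment's near-identity holonomy so that transition cells cost $O(\delta)$. The only presentational difference is that you merge the paper's two stages (first rigidify the middle to an exact rigid motion, then delete the rigid part and translate) into a single `folding' step and explicitly single out the drift estimate as a named ingredient, whereas the paper keeps the rigidification and the shortening separate; your budget $N_*\sim q(2\bar m+C_{\rm f})$ corresponds to the paper's $(2N_*'+4)(N_{\rm f}+1)+N_{\rm f}$, and your $\delta(\e_*,C_{\rm f})$ plays the role of the paper's $1/N_*'$.
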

\begin{proof}
Find $(k_n)_{n=1}^\infty\subset\N$, $(r_n)_{n=1}^\infty\subset (0,\infty)$ with $r_n\searrow 0$ and $\lim_{n\goto\infty}r_n k_n =\infty$, and $(\mathscr{y}^{(k_n)})_{n=1}^\infty\subset\mathrm{PAff}(\Lambda_{r_n,k_n})$ such that
\begin{align*}
\lim_{n\goto\infty}\calE_{k_n}(\mathscr{y}^{(k_n)},[-1,1])= \ph\bigl(\yb^+-\yb^-,(R^-)^{-1}R^+\bigl)
\end{align*}
and, for some $y_\pm^{(k_n)}\in\R^3$, $R_\pm^{(k_n)}\in{\rm SO}(3)$ with $y_\pm^{(k_n)}\goto \yb^\pm$, $R_\pm^{(k_n)}\goto R^\pm$, 
\begin{align*}
\mathscr{y}^{(k_n)}(w_1,\xb')=R_\pm^{(k_n)}\T{\bigl(r_nw_1,\frac{1}{k_n}\xb'\bigr)}+y_\pm^{(k_n)}\text{ on }I^\pm\times \overline{S^{\rm ext}}.
\end{align*}
Recalling assumption \tc{(W5)} on $W_{\rm cell}^{(k_n)}$ and passing to a subsequence (without relabelling it), we can assert that there is an $N_{\rm f}\in\N_0$, $N_{\rm f} \le C\ph(\yb^+-\yb^-,(R^-)^{-1}R^+)$, such that for every $n$, only the slices
\begin{equation*}
S_{k_n}(s_n^j):=\bigl[s_n^j-\frac{1}{2r_nk_n},s_n^j+\frac{1}{2r_nk_n}\bigr)\times \overline{S^{\rm ext}},\quad j\in\{1,\dots,N_{\rm f}\},
\end{equation*}
are \textit{broken} in the sense from the proof of Theorem \ref{comp}, where $s_n^1<\cdots<s_n^{N_{\rm f}}$ are the midpoints of the $w_1$-projections of the broken slices and $\lim_{n\goto\infty}s_n^j=s^j\in[-3/4,3/4]$. This means that $\bar{\nabla}_{r_n,k_n}\mathscr{y}^{(k_n)}$ on the remaining `intact' slices is $c_{\rm frac}^{(k_n)}$-close to $\bar{\rm SO}(3)$. Then
\begin{multline*}
\tilde{I}_1^{(n)}
=\Bigl[-\frac{\lfloor\frac{3}{4}r_nk_n\rfloor}{r_nk_n}\tc{+\frac{1}{r_nk_n}},s_n^1-\frac{1}{2r_nk_n}\Bigr],\\
\tilde{I}_2^{(n)}=\bigl[s_n^1+\frac{1}{2r_nk_n},s_n^2-\frac{1}{2r_nk_n}\bigr],\; \dots,\;\tilde{I}_{N_{\rm f}+1}^{(n)}=\Bigl[s_n^{N_{\rm f}}+\frac{1}{2r_nk_n},\frac{\lfloor\frac{3}{4}r_nk_n\rfloor}{r_nk_n}\Bigr]
\end{multline*}
are the $w_1$-projections of elastically deformed parts of the region surrounding the crack. We fix a number $N_*' \in \N$ (to be determined below) and denote by $\{\tilde{I}_{j_i}^{(n)}\}_{i=1}^{N_{\rm U}}\subset\{\tilde{I}_j^{(n)}\}_{j=1}^{N_{\rm f}+1}$ those intervals $\tilde{I}_{j_i}^{(n)}$ for which $\tg{r_nk_n|\tilde{I}_{j_i}^{(n)}|\geq 2N_*'+4}$. On extracting a further subsequence, $N_{\rm U}=N_{\rm U}(N_*')$ is independent of $n$. We assume $N_{\rm U}>0$, since otherwise the next `rigidification' procedure is redundant and it is enough to construct $\yp{k_n}$ directly from $\mathscr{y}^{(k_n)}$ later. To shorten notation, we set $\tilde{I}_{j_i}^{(n)}=:I_i^{(n)}=[a_i^{(n)}\tg{-\frac{1}{r_nk_n}},b_i^{(n)}\tg{+\frac{1}{r_nk_n}}]$.

As an intermediate step, we now construct mappings $\yw{k_n}$ \tc{(illustrated in Figure \ref{fig:rigCrack}(b))} which have the property that middle parts of the segments $I_i^{(n)}\times \overline{S^{\rm ext}}$ are only subject to a rigid motion, instead of an elastic deformation. The complements of these middle parts contain \tg{no more than $2N_*'+2$} slices, where $N_*':=\lfloor 2\tc{N_{\rm f}}C_{\rm E}/\e_*\rfloor+1$ and $C_{\rm E}$ is a positive constant (independent of $n$ and $\e_*$) that will be introduced in \eqref{eq:ywTransE}. The rigidifying procedure below is presented for an arbitrary but fixed $i\in\{1,\dots,N_{\rm U}\}$.

\begin{figure}[h]
  \caption{Main steps in the proof of Lemma \ref{rigCrack}. Rigid parts of the rod are drawn in grey. (a) The original mapping $\mathscr{y}^{(k_n)}$.  (b) Rigidification of rod segments to construct $\yw{k_n}$. (c) Subsequent shortening of the rigid parts to obtain $\yp{k_n}$.}\label{fig:rigCrack}
  \centering
\includegraphics[width=7.5cm]{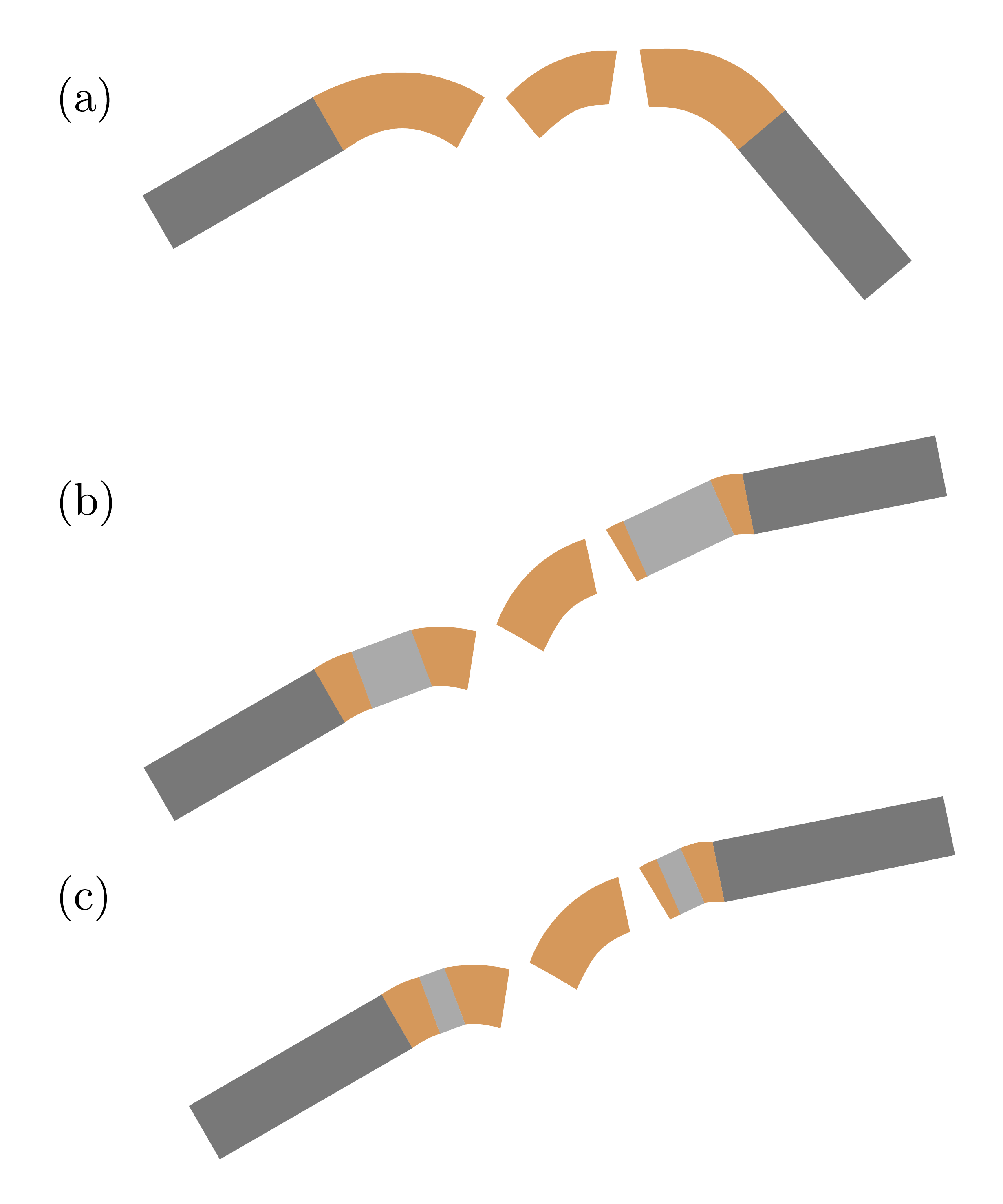}
\end{figure}

\textit{Procedure }(\texttt{R}). As in \cite[Theorem~2.4]{elRods} (which is a reformulation of the compactness theorem in \cite{MMh4}), we get piecewise constant mappings $R^{(k_n)}\colon I_i^{(n)}\goto{\rm SO}(3)$ with discontinuity set contained in $\frac{1}{r_nk_n}\Z$, fulfilling
\begin{align}\label{eq:dist-each-cell}
\begin{split}
r_n&\int_{S_{k_n}(\bar{w}_1)}|\nabla_{r_n,k_n}\mathscr{y}^{(k_n)}-R^{(k_n)}|^2\md w_1\md\xb'\\ 
&\leq \tg{\sum_{m=-1}^1}Cr_n\tg{\int_S\int_{\bar{w}_1+\frac{m}{r_nk_n}}^{\bar{w}_1+\frac{m+1}{r_nk_n}}}{\rm dist}^2(\nabla_{r_n,k_n}\mathscr{y}^{(k_n)},{\rm SO}(3))\md w_1\md\xb'\leq \tg{3}C r_n |S_{k_n}(\bar{w}_1)| (c_{\rm frac}^{(k)})^2 
\leq \frac{C}{k_n^2}
\end{split}
\end{align}
for all $w_1\in[a_i^{(n)},b_i^{(n)})$ by \cite[Theorem~3.1]{FrM02}, growth assumptions on $W_0$, and bounds related to our extension scheme (cf. \eqref{eq:nbhd-rig-est}). Moreover, \cite[Theorem~2.4]{elRods} implies
\begin{align}\label{eq:Rkneib}
\frac{1}{r_nk_n}\Bigl|R^{(k_n)}(w_1)-R^{(k_n)}\bigl(w_1\pm\frac{1}{r_nk_n}\bigr)\Bigr|^2
\leq C\int_{\bigcup\limits_{m=-1}^1 S_{k_n}(\bar{w}_1+\frac{m}{r_nk_n})}{\rm dist}^2(\nabla_{r_n,k_n}\mathscr{y}^{(k_n)},{\rm SO}(3))\md w_1\md\xb'
\end{align}
for all $w_1\in[a_i^{(n)},b_i^{(n)})$.

We now define points that delimit the middle part of $I_i^{(n)}\times \overline{S^{\rm ext}}$ (where $\mathscr{y}^{(k_n)}$ has to be `rigidified') and the sets $W_-^{(n)}$, $W_+^{(n)}$ containing the $w_1$-coordinates of cell midpoints left of or right of this middle part:
\begin{gather*}
a_{0,i}^{(n)}=a_i^{(n)}+\frac{N_*'}{r_nk_n},\; b_{0,i}^{(n)}=b_i^{(n)}-\frac{N_*'}{r_nk_n}\\
W_-^{(n)}=\bigl(\frac{1}{2r_nk_n}+\frac{1}{r_nk_n}\Z\bigr)\cap (a_i^{(n)},a_{0,i}^{(n)})\\
W_+^{(n)}=\bigl(\frac{1}{2r_nk_n}+\frac{1}{r_nk_n}\Z\bigr)\cap (b_{0,i}^{(n)},b_i^{(n)}).
\end{gather*}
The next few steps, till \eqref{eq:ywTransE}, are similar to the proof of the \tc{inequality $\ph\leq\psi$} (cf. Subsection \ref{sec:finCell}), so not all computations will be described in full here. We find $w_-^{(n)}\in W_-^{(n)}$ and $w_+^{(n)}\in W_+^{(n)}$ such that
\begin{gather*}
\tg{\sum_{\ell=-1}^1}\sum_{\xb'\in\mathcal{L}'^{,\rm ext}}k_nW_{\rm tot}^{(k_n)}\bigl(\xb',\bar{\nabla}_{r_n,k_n}\mathscr{y}^{(k_n)}(w_-^{(n)}\tg{+\tfrac{\ell}{r_nk_n}},\xb')\bigr)\leq\frac{\tg{3}}{N_*'}\calE_{k_n}\bigl(\mathscr{y}^{(k_n)},(a_i^{(n)},a_{0,i}^{(n)})\bigr),\\
\tg{\sum_{\ell=-1}^1}\sum_{\xb'\in\mathcal{L}'^{,\rm ext}}k_nW_{\rm tot}^{(k_n)}\bigl(\xb',\bar{\nabla}_{r_n,k_n}\mathscr{y}^{(k_n)}(w_+^{(n)}\tg{+\tfrac{\ell}{r_nk_n}},\xb')\bigr)\leq\frac{\tg{3}}{N_*'}\calE_{k_n}\bigl(\mathscr{y}^{(k_n)},(b_{0,i}^{(n)},b_i^{(n)})\bigr).
\end{gather*}
Writing $R_\pm^{(i,k_n)}$ in place of $R^{(k_n)}(w_\pm^{(n)})$ for short  and using that all the slices centred in $W_\pm^{(n)}$ are intact, from the first inequality in \eqref{eq:dist-each-cell} we get
\begin{equation*}
||\nabla_{r_n,k_n}\mathscr{y}^{(k_n)}-R_\pm^{(i,k_n)}||_{L^2(S_{k_n}(w_\pm^{(n)});\R^{3\times 3})}=O\Bigl(\frac{1}{\sqrt{N_*'r_n}k_n}\Bigr).
\end{equation*}
Choosing vectors $c_-^{(n)}$, $c_+^{(n)}$ as
\begin{gather*}
c_\pm^{(n)}=\dashint_{S_{k_n}(w_\pm^{(n)})}\mathscr{y}^{(k_n)}(w_1,\xb')-R_\pm^{(i,k_n)}\T{\bigl(r_n(w_1-w_\pm^{(n)}),\frac{1}{k_n}\xb'\bigr)}\md w_1\md\xb',
\end{gather*}
we get Poincaré inequalities
\begin{align*}
\MoveEqLeft
\sqrt{\int_{S_{k_n}(w_\pm^{(n)})}|\mathscr{y}^{(k_n)}(w_1,\xb')-R_\pm^{(i,k_n)}\T{\bigl(r_n(w_1-w_\pm^{(n)}),\frac{1}{k_n}\xb'\bigr)}-c_\pm^{(n)}|^2\md w_1\md\xb'}\\
&\leq C\frac{1}{k_n}||\nabla_{r_n,k_n}\mathscr{y}^{(k_n)}-R_\pm^{(i,k_n)}||_{L^2(S_{k_n}(w_\pm^{(n)});\R^{3\times 3})}
\end{align*}
with a constant $C>0$.

With the rotated and shifted version of $\mathscr{y}^{(k_n)}$, given by
\begin{equation}\label{eq:yr}
\mathscr{y}_{\rm r}^{(k_n)}(w_1,\xb'):=R_-^{(i,k_n)}\Bigl[\T{\bigl(R_+^{(i,k_n)}\bigr)}(\mathscr{y}^{(k_n)}(w_1,\xb')-c_+^{(n)})+\begin{pmatrix}r_n(w_+^{(n)}-w_-^{(n)})\\
0\end{pmatrix}\Bigr]+c_-^{(n)},
\end{equation}
set
\begin{equation*}
\yw{k_n}(w_1,\xb')=\begin{cases}
\mathscr{y}^{(k_n)}(w_1,\xb')& a_i^{(n)}-\tg{\tfrac{1}{r_nk_n}} \leq w_1\leq w_-^{(n)}-\frac{1}{2r_nk_n}\\[1.4ex]
\text{pcw. affine (24 simplices/cell)}& w_-^{(n)}-\frac{1}{2r_nk_n}<w_1 <w_-^{(n)}+\frac{1}{2r_nk_n}\\[1.4ex]
R_-^{(i,k_n)}\T{(r_n(w_1-w_-^{(n)}),\frac{1}{k_n}\xb')}+c_-^{(n)}&  w_-^{(n)}+\frac{1}{2r_nk_n}\leq w_1\leq w_+^{(n)}-\frac{1}{2r_nk_n} \\[1.4ex]
\text{pcw. affine (24 simplices/cell)}& w_+^{(n)}-\frac{1}{2r_nk_n}<w_1<w_+^{(n)}+\frac{1}{2r_nk_n}\\[1.4ex]
\mathscr{y}_{\rm r}^{(k_n)}(w_1,\xb') & w_+^{(n)}+\frac{1}{2r_nk_n}<w_1\leq b_i^{(n)}+\tg{\tfrac{1}{r_nk_n}}
\end{cases}
\end{equation*}
so that $\yw{k_n}$ is defined on $I_i^{(n)}\times \overline{S^{\rm ext}}$. Besides, to prepare future rigidification on possible next intervals, we redefine $\mathscr{y}^{(k_n)}$ by $\mathscr{y}^{(k_n)}:=\mathscr{y}_{\rm r}^{(k_n)}$ on $[b_i^{(n)}\tg{+\frac{1}{r_nk_n}},1]\times\overline{S^{\rm ext}}$.

After some calculations we deduce that on any atomic cell $Q$ such that ${\rm Int}\,Q\subset S_{k_n}(w_-^{(k_n)})$,
\begin{align*}
\bigl|\bar{\nabla}_{r_n,k_n}\mathscr{y}^{(k_n)}|_Q-R_-^{(i,k_n)}\bar{\Id}\bigr|&=O\Bigl(\frac{1}{\sqrt{N_*'k_n}}\Bigr)\quad\text{and consequently,}\\
\bigl|\bar{\nabla}_{r_n,k_n}\yw{k_n}|_Q-R_-^{(i,k_n)}\bar{\Id}\bigr|&=O\Bigl(\frac{1}{\sqrt{N_*'k_n}}\Bigr),
\end{align*}
which implies that for all $n$ sufficiently large, the energetic error occurring on the transition slice $S_{k_n}(w_-^{(k_n)})$ is controlled by our choice of $N_*'$:
\begin{equation}\label{eq:ywTransE}
\bigl|\calE_{k_n}\bigl(\mathscr{y}^{(k_n)},w_-^{(n)}+\frac{1}{2r_nk_n}(-1,1)\bigr)-\calE_{k_n}\bigl(\yw{k_n},w_-^{(n)}+\frac{1}{2r_nk_n}(-1,1)\bigr)\bigr|\leq\frac{C_{\rm E}}{N_*'}.
\end{equation}
It should be stressed that the constant $C_{\rm E}$ above does not depend on $n$ or $\e_*$. Due to the definition of $\mathscr{y}_{\rm r}^{(k_n)}$, an analogous computation reveals that \eqref{eq:ywTransE} also holds if $w_-^{(n)}$ is replaced with $w_+^{(n)}$.

Later we will have to check that $(\yw{k_n})_{n=1}^\infty$ is an admissible competitor of $(\mathscr{y}^{(k_n)})_{n=1}^\infty$ in the cell formula. Therefore we now show that the error incurred by the boundary condition due to the previous steps of Procedure (\texttt{R}) tends to zero.

By our interpolation scheme, on any atomic cell $Q$ contained in $I_i^{(n)}\times\overline{S^{\rm ext}}$ we have (cf. \cite[Lemma~3.5]{BS09})
\begin{align*}
\bigl\lVert\nabla_{r_n,k_n}\mathscr{y}^{(k_n)}|_Q\bigr\rVert_\infty 
\leq 24 \dashint_Q |\nabla_{r_n,k_n}\mathscr{y}^{(k_n)}|\md w_1\md\xb'
\leq C \bigl|\,\bar{\nabla}_{r_n,k_n}\mathscr{y}^{(k_n)}|_Q\,\bigr| 
\leq C 
\end{align*}
since ${\rm dist}^2(\bar{\nabla}_{r_n,k_n}\mathscr{y}^{(k_n)},\bar{\rm SO}(3))\leq (c_{\rm frac}^{(k_n)})^2$. This proves that the mappings $\mathscr{y}^{(k_n)}|_{I_i^{(n)}\times\overline{S^{\rm ext}}}$ are Lipschitz with the uniform constant $Cr_n$. In particular, 
\begin{equation*}
\lim_{n\goto\infty}|c_+^{(n)}-c_-^{(n)}|=0.
\end{equation*}

Since by iterating \eqref{eq:Rkneib} we derive a `pointwise curvature estimate' (as in \cite{MMh4,FrM02})
\begin{equation*}
|R_+^{(i,k_n)}-R_-^{(i,k_n)}|^2\leq Cr_n^2k_n^2\int_{I_i^{(n)}\times S}{\rm dist}^2(\nabla_{r_n,k_n}\mathscr{y}^{(k_n)},{\rm SO}(3))\md w_1\md\xb'=O(r_n)
\end{equation*}
we obtain for $\mathscr{y}_{\rm r}^{(k_n)}$ from \eqref{eq:yr} that $|\mathscr{y}_{\rm r}^{(k_n)}-\mathscr{y}^{(k_n)}|\goto 0$ uniformly.

This finishes Procedure (\texttt{R}) for the chosen $i$.

We construct $\yw{k_n}$ by letting $\yw{k_n}(w_1,\xb'):=\mathscr{y}^{(k_n)}(w_1,\xb')$ for every $-1\leq w_1\leq a_1^{(n)}\tg{-\frac{1}{r_nk_n}}$ and $\xb'\in \overline{S^{\rm ext}}$ and then by successively applying Procedure (\texttt{R}) for $i=1,2,\dots N_{\rm U}$ (it should be kept in mind that after each invocation of Procedure (\texttt{R}), $\mathscr{y}^{(k_n)}$ is redefined on $[b_i^{(n)}\tg{+\frac{1}{r_nk_n}},1]\times\overline{S^{\rm ext}}$ so that in step $i+1$ we get the modified mapping $\mathscr{y}^{(k_n)}$ from step $i$ as input).

On $(\frac{1}{r_nk_n}\lfloor\frac{3}{4}r_nk_n\rfloor,1]\times\overline{S^{\rm ext}}$, we define $\yw{k_n}$ as $\yw{k_n}:=\mathscr{y}^{(k_n)}$, where $\mathscr{y}^{(k_n)}$ is understood as the transformed mapping after the $N_{\rm U}$-th step of rigidification.

As we have seen above, the affine transformations given by \eqref{eq:yr} at each step vanish in the limit. Hence, $((r_n)_{n=1}^\infty,(k_n)_{n=1}^\infty,(\yw{k_n})_{n=1}^\infty)\in\mathcal{V}_{\yb^+-\yb^-,(R^-)^{-1}R^+}$.

To summarize, the sequence $(\yw{k_n})_{n=1}^\infty$ satisfies
\begin{align*}
\ph\bigl(\yb^+-\yb^-,(R^-)^{-1}R^+\bigr)
&\leq\limsup_{n\goto\infty}\calE_{k_n}\bigl(\yw{k_n},[-1,1]\bigr)\\
&\leq\ph\bigl(\yb^+-\yb^-,(R^-)^{-1}R^+\bigr)+2N_{\rm U}\frac{C_{\rm E}}{N_*'}\leq \ph\bigl(\yb^+-\yb^-,(R^-)^{-1}R^+\bigr)+\e_*.
\end{align*}

Now we proceed to construct the modifications $\yp{k_n}$ of $\yw{k_n}$ which will have more localized non-rigid parts (as depicted in Figure \ref{fig:rigCrack}(c)).
 
Since no confusion arises, we again use $R_\pm^{(k_n)}$ and $y_\pm^{(k_n)}$ to denote the rigid deformations near the interval boundaries, i.e.
\begin{align*}
\yw{(k_n)}(w_1,\xb')=R_\pm^{(k_n)}\T{\bigl(r_nw_1,\frac{1}{k_n}\xb'\bigr)}+y_\pm^{(k_n)}\text{ on }I^\pm\times \overline{S^{\rm ext}}.
\end{align*}
Now we first extend $\yw{k_n}$ rigidly to a function on $\R\times\overline{S^{\rm ext}}$ by requiring this formula to hold true on $(-\infty,-\frac{3}{4})\times \overline{S^{\rm ext}}$ and $(\frac{3}{4},\infty)\times \overline{S^{\rm ext}}$, with the obvious interpretation of the $\pm$ sign.

If $j=j_i$ for some $i\in\{1,2,\dots,N_{\rm U}\}$, then we write $w_-^{(i,n)}$, $w_+^{(i,n)}$ in place of $w_-^{(n)}$, $w_+^{(n)}$ from Procedure (\texttt{R}), respectively, to stress the dependence on $i$. We set $d^{(i,n)}= w_{\rm +}^{(i,n)}-w_{\rm -}^{(i,n)}-\frac{1}{r_nk_n}$ and also recall the definition of $R_-^{(i,k_n)}$ on this interval. Now consecutively do the following steps for $i\in\{1,2,\dots,N_{\rm U}\}$, in reverse order starting with $i=N_{\rm U}$: 
\begin{align*}
\yp{k_n}(w_1,\xb')&:=\begin{cases}
\yw{k_n}(w_1,\xb') & w_1 \leq w_{\rm -}^{(i,n)}+\frac{1}{2r_nk_n},\\
\yw{k_n}(w_1+d^{(i,n)},\xb')-r_nd^{(i,n)} R_-^{(i,k_n)} e_1
&w_1 > w_{\rm -}^{(i,n)}+\frac{1}{2r_nk_n},
\end{cases}\\
\tc{\yw{k_n}(w_1,\xb')}&:=\tc{\yp{k_n}(w_1,\xb'),\quad\qquad\qquad\qquad w_1\geq w_-^{(i,n)}+\tfrac{1}{2r_nk_n},\; \xb'\in\overline{S^{\rm ext}}.}
\end{align*}
This finally results in a configuration with 
\begin{equation*}
\yp{k_n}(w_1,\xb')
=\yw{k_n}(w_1,\xb')
=R_-^{(k_n)}\T{\bigl(r_nw_1,\frac{1}{k_n}\xb'\bigr)}+y_-^{(k_n)}
\end{equation*}
if $w_1 \leq -\frac{3}{4}$, \tc{$\xb'\in\overline{S^{\rm ext}}$}, and 
\begin{equation*}
\yp{k_n}(w_1,\xb')=
\yw{k_n}(w_1+d^{(n)},\xb')-r_n c^{(n)} 
=R_+^{(k_n)}\T{\bigl(r_nw_1,\frac{1}{k_n}\xb'\bigr)}+r_nd^{(n)}R_+^{(k_n)}e_1+y_+^{(k_n)}-r_n c^{(n)}
\end{equation*}
where $d^{(n)}=\sum_{i=1}^{N_{\rm U}}d^{(i,n)}$ and $c^{(n)}=\sum_{i=1}^{N_{\rm U}} d^{(i,n)} R_-^{(i,k_n)} e_1$, if $w_1 \geq \frac{3}{4} - d^{(n)}$ \tc{and $\xb'\in\overline{S^{\rm ext}}$}.

Observe that $\calE_{k_n}(\yp{k_n},[-1,1])=\calE_{k_n}(\yw{k_n},[-1,1])$ for every $n\in\N$ as we have only shortened the intermediate rigid parts. Also, \tc{the length of the non-rigid part now satisfies}
\begin{equation*}
\tc{\frac{1}{r_nk_n}\Bigl\lfloor\frac{3}{4}\Bigr\rfloor - d^{(n)} - \frac{1}{r_nk_n}\Bigl(- \Bigl\lfloor\frac{3}{4}\Bigr\rfloor+1\Bigr)}
\le \frac{1}{r_nk_n} \bigl( (2 N_*'\tg{+4})\tc{( N_{\rm f}+1)} + N_{\rm f} \bigr).
\end{equation*}
Setting $N_* = (2 N_*\tg{+4})( N_{\rm f}+1)+N_{\rm f}$ and shifting we finally obtain $\yp{k_n}$ as claimed.
\end{proof}
\begin{rem}
Lemma \ref{rigCrack} shows that the choice of $I^\pm$ in the definition of $\ph$ was arbitrary and that a different positive length of $I^\pm$ which still leaves a nonempty middle interval for fracture would give the same value of $\ph$. 
\end{rem}

Our next task is to prove that the passages to subsequences $(k_n)$ can be avoided when approximating the value of the cell formula.
\begin{prop}\label{arbAtomDist}
Suppose that $\yb^-,\yb^+\in\R^3$ and $R^-,R^+\in{\rm SO}(3)$. Then for any $\e_*>0$ and any nonincreasing sequence $\{\rho_k\}_{k=1}^\infty\subset (0,\infty)$ with $\lim_{k\goto\infty}\rho_k =0$ and $\lim_{k\goto\infty}\rho_k k =\infty$ there exist deformations $\bar{\mathscr{y}}^{(k)}\colon ([-1,1]\times \overline{S^{\rm ext}})\goto\R^3$ such that $((\rho_k)_{k=1}^\infty,(k)_{k=1}^\infty,(\bar{\mathscr{y}}^{(k)})_{k=1}^\infty)\in\mathcal{V}_{\yb^+-\yb^-,(R^-)^{-1}R^+}$ and
\begin{equation*}
\limsup_{k\goto\infty}\calE_k(\bar{\mathscr{y}}^{(k)},[-1,1])\leq\ph\bigl(\yb^+-\yb^-,(R^-)^{-1}R^+\bigr)+\e_*.
\end{equation*}
\end{prop}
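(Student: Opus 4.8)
The plan is to localize a near-optimal competitor for $\ph$ by Lemma~\ref{rigCrack}, to \emph{transplant} its finitely many non-rigid cells onto the prescribed lattices $\Lambda_{\rho_k,k}$, and to transfer the energy bound via the monotonicity in (W4). We may assume $\ph:=\ph(\yb^+-\yb^-,(R^-)^{-1}R^+)<\infty$, since otherwise any element of $\mathcal V_{\yb^+-\yb^-,(R^-)^{-1}R^+}$ (e.g.\ the explicit rigid-switch competitor from the remark above) already works. Applying Lemma~\ref{rigCrack} with $\e_*/2$ in place of $\e_*$ gives $N_*\in\N$, an increasing $(k_n)\subset\N$, a sequence $(r_n)\subset(0,\infty)$ with $r_n\searrow0$, $r_nk_n\to\infty$, and $\yp{k_n}\in\mathrm{PAff}(\Lambda_{r_n,k_n})$ with $\limsup_n\calE_{k_n}(\yp{k_n},[-1,1])\le\ph+\e_*/2$, where $\yp{k_n}$ equals the rigid motions $\Rp{\pm}{k_n}\T{(r_nw_1,\xb'/k_n)}+\ypp{\pm}{k_n}$ (with $\Rp{\pm}{k_n}\to R^\pm$, $\ypp{\pm}{k_n}\to\yb^\pm$) off the $2N_*$ cells meeting $I_{\rm c}^{(n)}$. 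By (W2) rigid cells carry no energy, so $\calE_{k_n}(\yp{k_n},[-1,1])$ is a sum over a number of cells bounded independently of $n$.

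For each $k\in\N$ I would pick $n(k)$ with $k_{n(k)}\ge k$ and (ideally) $k_{n(k)}/k\to1$; note $n(k)\to\infty$. Then define $\bar{\mathscr{y}}^{(k)}\in\mathrm{PAff}(\Lambda_{\rho_k,k})$ by writing down, on the $2N_*$ cells of $\Lambda_{\rho_k,k}$ straddling $w_1=0$, the same discrete-gradient matrices $\bar\nabla_{r_{n(k)},k_{n(k)}}\yp{k_{n(k)}}$ that $\yp{k_{n(k)}}$ carries on its crack cells, and extending rigidly on the complement; concretely this is a rescaling of $\yp{k_{n(k)}}$ by the factor $k_{n(k)}/k$ together with a $w_1$-reparametrisation and a rigid prolongation of the two ends over all of $I^\pm$. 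One checks that the rigid parts of $\bar{\mathscr{y}}^{(k)}$ then have the required form $\bar R^{(k)}_\pm\T{(\rho_kw_1,\xb'/k)}+\bar y^{(k)}_\pm$ with $\bar R^{(k)}_\pm=\Rp{\pm}{k_{n(k)}}\to R^\pm$, and that a suitable global translation makes $\bar y^{(k)}_-\to\yb^-$ while $\bar y^{(k)}_+-\bar y^{(k)}_-=\tfrac{k_{n(k)}}{k}(\ypp{+}{k_{n(k)}}-\ypp{-}{k_{n(k)}})\to\yb^+-\yb^-$ exactly because $k_{n(k)}/k\to1$. With $\rho_k\searrow0$, $\rho_kk\to\infty$ (hypothesis) this yields $((\rho_k),(k),(\bar{\mathscr{y}}^{(k)}))\in\mathcal V_{\yb^+-\yb^-,(R^-)^{-1}R^+}$.

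The energy estimate is then immediate: the discrete gradients of $\bar{\mathscr{y}}^{(k)}$ equal cellwise those of $\yp{k_{n(k)}}$, hence by frame indifference (W1) the arguments of $W_{\rm tot}^{(k)}$ agree (up to a global translation) with those of $W_{\rm tot}^{(k_{n(k)})}$ in $\yp{k_{n(k)}}$; since $kW_{\rm tot}^{(k)}(\xb',\cdot)$ is nondecreasing in $k$ by (W4) and $k\le k_{n(k)}$, summing over the bounded family of cells yields $\calE_k(\bar{\mathscr{y}}^{(k)},[-1,1])\le\calE_{k_{n(k)}}(\yp{k_{n(k)}},[-1,1])$, so that $\limsup_k\calE_k(\bar{\mathscr{y}}^{(k)},[-1,1])\le\ph+\e_*/2\le\ph+\e_*$, as required.

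The hard part will be the requirement $k_{n(k)}/k\to1$: leaving the cell matrices unchanged is precisely what makes (W4) applicable, but it multiplies the crack opening by $k_{n(k)}/k$, so the boundary data are correct only if this ratio tends to $1$, i.e.\ only if the scale sequence in Lemma~\ref{rigCrack} can be arranged with $k_{n+1}/k_n\to1$ rather than along a sparse subsequence. Making this rigorous is where the localization to a fixed number of cells is indispensable, reducing the matter to bounded-dimensional bookkeeping of cell matrices; any residual scale mismatch would have to be absorbed into finitely many elastic transition cells, whose contribution is $o(1)$ since they remain in the regime $\mathrm{dist}(\cdot,\bar{\rm SO}(3))\le c_{\rm frac}^{(k)}$ where $W_{\rm tot}^{(k)}=W_0$ is $\mathcal C^2$ with $W_0\asymp\mathrm{dist}^2(\cdot,\bar{\rm SO}(3))$.
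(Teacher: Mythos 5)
Your construction is exactly the paper's: rescale $\yp{k_{n_k}}$ by the factor $k_{n_k}/k$ and reparametrise $w_1$ so that $\bar{\nabla}_{\rho_k,k}\bar{\mathscr{y}}^{(k)}$ agrees cellwise with $\bar{\nabla}_{r_{n_k},k_{n_k}}\yp{k_{n_k}}$, then sum (W4) cell by cell. The paper writes $\bar{\mathscr{y}}^{(k)}(w_1,\xb') = \frac{k_{n_k}}{k}\yp{k_{n_k}}\bigl(\frac{\rho_kk}{r_{n_k}k_{n_k}}w_1,\xb'\bigr)$ with $n_k$ the smallest index with $k_{n_k}\geq k$, which is precisely your formula, and your computation of the induced boundary data $\bar R^{(k)}_\pm=\Rp{\pm}{k_{n_k}}$, $\bar y^{(k)}_+-\bar y^{(k)}_-=\frac{k_{n_k}}{k}(\ypp{+}{k_{n_k}}-\ypp{-}{k_{n_k}})$ also matches.

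You are right that the requirement $k_{n_k}/k\to 1$ is the delicate step: when $\yb^+\ne\yb^-$ the admissibility $((\rho_k),(k),(\bar{\mathscr{y}}^{(k)}))\in\mathcal{V}_{\yb^+-\yb^-,(R^-)^{-1}R^+}$ genuinely hinges on it, and Lemma~\ref{rigCrack}, which emerges from successive subsequence extractions, does not by itself produce a sequence $(k_n)$ with $k_{n+1}/k_n\to1$; the paper's proof is terse on this point too. However, the repair you sketch --- absorbing the residual scale mismatch in finitely many elastic transition cells --- cannot work, for a quantitative reason you should keep in mind: in an intact cell one has $\mathrm{dist}(\bar\nabla\hat y,\bar{\rm SO}(3))\le c_{\rm frac}^{(k)}\sim k^{-1/2}$, so the step across one cell deviates from a rigid step by only $O(c_{\rm frac}^{(k)}/k)=O(k^{-3/2})$; even spending all $O(\rho_kk)$ cells of the blown-up rod on the correction buys a net displacement of $O(\rho_k k^{-1/2})\to 0$, which cannot compensate a mismatch of order $\bigl|\frac{k_{n_k}}{k}-1\bigr|\,|\yb^+-\yb^-|$ unless that quantity is already infinitesimal. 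Correcting instead at the broken cells would require quantitative control of $\bar W^{(k)}$ far from $\bar{\rm SO}(3)$ (in the spirit of (W9)), which is not among the hypotheses of this proposition. So the step you flag as ``the hard part'' is in fact still open in your write-up; one needs either a strengthened version of Lemma~\ref{rigCrack} guaranteeing $k_{n+1}/k_n\to1$, or a different device for fixing the crack opening.
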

\begin{proof}
For a given $\e_*>0$ we choose $N_*\in\N$, a (without loss of generality nondecreasing) sequence $(k_n)_{n=1}^\infty$, and mappings $\yp{k_{n}}\in\mathrm{PAff}(\Lambda_{r_{n},k_{n}})$ as in Lemma~\ref{rigCrack} so that 
\begin{align*}
\limsup_{n\goto\infty}\calE_{k_{n}}(\yp{k_{n}},[-1,1])\leq &\ph\bigl(\yb^+-\yb^-,(R^-)^{-1}R^+\bigr)+\e_*,
\end{align*}
and, for suitable $\ypp{\pm}{k_{n}}\in\R^3,\; \Rp{\pm}{k_{n}}\in{\rm SO}(3)$ with $\ypp{\pm}{k_{n}}\goto \yb^\pm$, $\Rp{\pm}{k_{n}}\goto R^\pm$, after a rigid extension to the left and to the right, 
\begin{align*}
\yp{k_{n}}(w_1,\xb')=\Rp{\pm}{k_{n}}\T{\bigl(r_{n}w_1,\frac{\xb'}{k_{n}}\bigr)}&+\ypp{\pm}{k_{n}}\text{ on }\bigl(\R\setminus I_{\rm c}^{(n)}\bigr)\times \overline{S^{\rm ext}}
\end{align*}
where $I_{\rm c}^{(n)}=\frac{1}{r_{n}k_{n}}[-N_*,N_*]$.

For each $k\in\N$ find $n_k\in\N$ such that $k_{n_k}^{-1}\leq k^{-1}\leq k_{n_k-1}^{-1}$. Set 
\begin{align*}
\bar{\mathscr{y}}^{(k)}(w_1,\xb')
:=\frac{k_{n_k}}{k}\yp{k_{n_k}}\bigl(\frac{\rho_kk}{r_{n_k}k_{n_k}} w_1, \xb'\bigr) ,\quad(w_1,\xb')\in[-1,1]\times \overline{S^{\rm ext}}.
\end{align*}
Like this, $\bar{\mathscr{y}}^{(k)}$ is well-defined (as far as the boundary condition on $I^\pm\times \overline{S^{\rm ext}}$ is concerned), at worst for all $k$ larger than a certain $\bar{k}\in\N$.  If it is the case that $\bar{k}>1$, we define $\mathscr{y}^{(1)},\dots,\mathscr{y}^{(\bar{k}-1)}$ as we like, e.g.\ by extending the boundary rigid motions to all of $[-1,1]\times \overline{S^{\rm ext}}$. Then for $k\geq\bar{k}$,
\begin{equation*}
\bar{\nabla}_{\rho_k,k}\mathscr{y}^{(k)}(w_1,\xb')
=\bar{\nabla}_{r_{n_k},k_{n_k}}\yp{k_{n_k}}\bigl(\frac{\rho_kk}{r_{n_k}k_{n_k}} w_1, \xb'\bigr) 
\end{equation*}
and 
\begin{equation*}
kW_{\rm tot}^{(k)}\Bigl(\xb', \bar{\nabla}_{\rho_k,k}\mathscr{y}^{(k)}(w_1,\xb') \Bigr)
\leq \tc{k_{n_k}W_{\rm tot}^{(k_{n_k})}\Bigl(\xb',}\bar{\nabla}_{r_{n_k},k_{n_k}}\yp{k_{n_k}}\bigl(\frac{\rho_kk}{r_{n_k}k_{n_k}} w_1, \xb'\bigr) \Bigr)
\end{equation*}
by assumption (W4) on the cell energy. This yields
\begin{align*}
\ph\bigl(\yb^+-\yb^-,(R^-)^{-1}R^+\bigr)
&\leq\limsup_{k\goto\infty}\calE_k(\bar{\mathscr{y}}^{(k)},[-1,1])\\
&\leq\limsup_{k\goto\infty}\calE_{k_{n_k}}(\yp{k_{n_k}},[-1,1])
\leq \ph\bigl(\yb^+-\yb^-,(R^-)^{-1}R^+\bigr)+\e_*.\qedhere
\end{align*}
\end{proof}

The approximating sequence $(\mathscr{y}^{(k)})$ around crack points can be chosen to be bounded in $L^\infty$ in a universal way – this is the content of

\begin{prop}\label{LinftyBdd}
Suppose that $\yb^-,\yb^+\in\R^3$, $R^-,R^+\in{\rm SO}(3)$ and $(r_k)_{k=1}^\infty\subset (0,\infty)$ is a nonincreasing sequence with $\lim_{k\goto\infty}r_k =0$ and $\lim_{k\goto\infty}r_k k =\infty$. Assume that $\mathscr{y}^{(k)}\colon ([-1,1]\times \overline{S^{\rm ext}})\goto\R^3$ is such that $((r_k)_{k=1}^\infty,(k)_{k=1}^\infty,(\mathscr{y}^{(k)})_{k=1}^\infty)\in\mathcal{V}_{\yb^+-\yb^-,(R^-)^{-1}R^+}$ with 
\begin{equation*}
 \mathscr{y}^{(k)}(w_1,\xb')=R_\pm^{(k)}\T{\bigl(r_nw_1,\frac{1}{k}\xb'\bigr)}+y_\pm^{(k)}\text{ on }I^\pm\times \overline{S^{\rm ext}} 
\end{equation*}
for $R_\pm^{(k)}\goto R^\pm$, $y_\pm^{(k)}\goto \yb^\pm$. If the maximum interaction range property \tc{(W9)} with rate $(M_k)_{k=1}^{\infty}$ holds true, then there exists a modification $\bar{\mathscr{y}}^{(k)}$ with $((r_k)_{k=1}^\infty,(k)_{k=1}^\infty,(\bar{\mathscr{y}}^{(k)})_{k=1}^\infty)\in\mathcal{V}_{\yb^+-\yb^-,(R^-)^{-1}R^+}$ such that 
\begin{equation*}
|\calE_k(\bar{\mathscr{y}}^{(k)},[-1,1])
-\calE_k(\mathscr{y}^{(k)},[-1,1])|
\leq \frac{C}{kM_k}\calE_k(\mathscr{y}^{(k)},[-1,1]), 
\end{equation*}
$\bar{\mathscr{y}}^{(k)}=\mathscr{y}^{(k)}$ on $(I^- \cup I^+)\times \overline{S^{\rm ext}}$ and 
\begin{equation*}
||\mathrm{dist}(\bar{\mathscr{y}}^{(k)}, \{y_-^{(k)},y_+^{(k)}\})||_\infty\leq Cr_kM_kk\calE_k(\mathscr{y}^{(k)},[-1,1]). 
\end{equation*}
\end{prop}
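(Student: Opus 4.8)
The idea is to exploit the maximum interaction range property \tc{(W9)} to ``fold back'' the parts of the deformed rod that have drifted far away, paying an energy price proportional only to the (bounded) number of broken cells. First I would record the structural input, arguing as in the proof of Theorem~\ref{comp}: by Assumptions \tc{(W3)} and \tc{(W5)}, every cell $Q$ with $\mathrm{dist}(\bar{\nabla}_{r_k,k}\mathscr{y}^{(k)}|_Q,\bar{\rm SO}(3))>c_{\rm frac}^{(k)}$ carries energy $kW_{\rm tot}^{(k)}\ge c>0$ with $c$ independent of $k$, so the number $N_{\rm b}$ of such \emph{broken} cells satisfies $N_{\rm b}\le C\calE_k(\mathscr{y}^{(k)},[-1,1])$. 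On every non‑broken cell the eight vertex values of $\mathscr{y}^{(k)}$ lie within $C/k$ of a common rigid image, hence are pairwise at distance $\le C/k<M_k$ for $k$ large (recall $M_kk\to\infty$); consequently two vertices of a common cell can be more than $M_k$ apart only inside a broken cell, and (since $\mathscr{y}^{(k)}$ is a rigid motion on, and beyond, $I^\pm$) all broken cells lie in $(-\tfrac34,\tfrac34)\times\overline{S^{\rm ext}}$.

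\textbf{Construction.} Next I would build the auxiliary graph on the atoms of $\Lambda_{r_k,k}$ joining two atoms iff they share a cell and their $\mathscr{y}^{(k)}$‑images are closer than $M_k$. By the previous paragraph this graph differs from the full cell graph only inside broken cells, so it has at most $C\calE_k+1$ connected components $Z_0,\dots,Z_P$, which I order by the smallest $\xb_1$‑coordinate of their atoms; inside each $Z_p$ any two atoms are joined by a path of at most $C r_kk$ short edges, whence $\mathrm{diam}\,\mathscr{y}^{(k)}(Z_p)\le Cr_kkM_k$. The components $Z_0$ (containing the rigid part with $w_1\le -\tfrac34$, hence $I^-$) and $Z_P$ (containing $I^+$) are the only ones meeting $\{|w_1|\ge\tfrac34\}$ and will be kept fixed. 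For the remaining pieces I choose translations $\Phi_1,\dots,\Phi_{P-1}$ (with $\Phi_0=\Phi_P=\mathrm{Id}$) and define $\bar{\mathscr{y}}^{(k)}$ to be the $\mathrm{PAff}(\Lambda_{r_k,k})$‑interpolation of the atom values $a\mapsto\Phi_{p(a)}(\mathscr{y}^{(k)}(a))$, where $p(a)$ is the index of the component containing $a$. The $\Phi_p$ are picked recursively along the ordering so that (i) any cell meeting two components still sees the corresponding vertex clusters at mutual distance $\ge M_k$, and (ii) the pieces march in a straight line away from $y_-^{(k)}$ until, at one index, a single translation $\approx y_+^{(k)}-y_-^{(k)}$ brings the chain to $y_+^{(k)}$, after which they march away from $y_+^{(k)}$; the total march on each side is $\le \sum_p\mathrm{diam}\,\mathscr{y}^{(k)}(Z_p)+PM_k\le Cr_kkM_k$, and if $|y_+^{(k)}-y_-^{(k)}|<M_k$ the single large translation is simply omitted.

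\textbf{Estimates.} A cell entirely contained in one $Z_p$ is transported by the single rigid motion $\Phi_p$, so by frame‑indifference \tc{(W1)} its energy is unchanged. A cell meeting $2\le n_{\rm C}\le 8$ components is a broken cell; under the modification its vertex values are moved piecewise by the $\Phi_p$, the vertex clusters associated with the partition by component membership are mutually $\ge M_k$ apart before (by definition of the components) and after (by (i)), so \tc{(W9)} yields $|W_{\rm tot}^{(k)}(\xb',\bar{\nabla}_{r_k,k}\bar{\mathscr{y}}^{(k)})-W_{\rm tot}^{(k)}(\xb',\bar{\nabla}_{r_k,k}\mathscr{y}^{(k)})|\le C_{\rm far}/(M_kk^2)$. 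Since there are at most $N_{\rm b}\le C\calE_k(\mathscr{y}^{(k)},[-1,1])$ such cells, summing and multiplying by $k$ gives $|\calE_k(\bar{\mathscr{y}}^{(k)},[-1,1])-\calE_k(\mathscr{y}^{(k)},[-1,1])|\le Ck\cdot\calE_k\cdot\tfrac{C_{\rm far}}{M_kk^2}=\tfrac{C}{kM_k}\calE_k(\mathscr{y}^{(k)},[-1,1])$. For the $L^\infty$ bound, the image of $\bar{\mathscr{y}}^{(k)}$ lies in $\bigcup_p\Phi_p(\overline{\mathscr{y}^{(k)}(Z_p)})$, each set having diameter $\le Cr_kkM_k$ and lying within $Cr_kkM_k$ of $\{y_-^{(k)},y_+^{(k)}\}$ by the choice of the $\Phi_p$; absorbing the intrinsic $O(r_k)$ size of the rod's image (which is $\le Cr_kkM_k\calE_k$ since $M_kk\to\infty$ and $\calE_k$ is bounded below in the regime of interest, $\calE_k\approx\ph>0$), one obtains $\|\mathrm{dist}(\bar{\mathscr{y}}^{(k)},\{y_-^{(k)},y_+^{(k)}\})\|_\infty\le Cr_kM_kk\calE_k(\mathscr{y}^{(k)},[-1,1])$. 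Because $\Phi_0=\Phi_P=\mathrm{Id}$, the atoms with $|w_1|\ge\tfrac34$ are untouched, so $\bar{\mathscr{y}}^{(k)}=\mathscr{y}^{(k)}$ on $(I^-\cup I^+)\times\overline{S^{\rm ext}}$, and the boundary data $R_\pm^{(k)}\to R^\pm$, $y_\pm^{(k)}\to\yb^\pm$ persist, so $((r_k)_{k},(k)_{k},(\bar{\mathscr{y}}^{(k)})_{k})\in\mathcal{V}_{\yb^+-\yb^-,(R^-)^{-1}R^+}$.

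\textbf{Main obstacle.} The analytic content is light; the difficulty is organizational. One must order the components and choose the $\Phi_p$ so that every cell meeting several components straddles only pieces whose relative placement is actually controlled by the recursion, and verify that the straight‑line (or zig‑zag) choice of translations is simultaneously compatible with keeping all inter‑cluster gaps $\ge M_k$ (so that \tc{(W9)} applies), keeping $Z_0,Z_P$ pinned at $y_\mp^{(k)}$, and keeping the accumulated displacement of order $\calE_kr_kkM_k$. Degenerate situations — several consecutive broken slices forming a cluster with ``wild'' interior layers (handled by allowing transition cells to straddle up to eight pieces), $y_-^{(k)}$ and $y_+^{(k)}$ nearly coinciding while $M_k$ is comparatively small (handled by omitting the large translation), and the at most $\sharp\mathcal{L}'^{,\rm ext}$ cells bridging $\{w_1=\pm\tfrac34\}$ — require some care but no new ideas. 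The remaining points (the interpolation estimates bounding $\mathrm{diam}\,\mathscr{y}^{(k)}(Z_p)$ via the cellwise Lipschitz bound \`a la \cite[Lemma~3.5]{BS09}, and that the modified map stays in $\mathrm{PAff}(\Lambda_{r_k,k})$) are routine.
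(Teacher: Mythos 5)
Your overall strategy coincides with the paper's: build a graph on $\Lambda_{r_k,k}$ whose edges join atoms that share a cell and have images within $M_k$, count the connected components by the number of broken cells $\lesssim\calE_k$, bound the deformed diameter of each component by $\lesssim M_kr_kk$ via a chain of $\lesssim r_kk$ short edges, keep the two boundary components fixed, translate the remaining pieces rigidly, and invoke \tc{(W9)} cellwise to control the energy change. The energy arithmetic and the comment that $\calE_k$ is bounded below in the regime of interest are fine.

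Where you diverge is precisely the step you flag as the ``main obstacle'' — the placement of the translated components — and this is a genuine gap, not just an organizational nuisance. Your ``march in a straight line away from $y_-^{(k)}$, jump by $\approx y_+^{(k)}-y_-^{(k)}$, then march toward $y_+^{(k)}$'' scheme has to be tuned to avoid landing pieces within $M_k$ of $y_+^{(k)}$ or of each other, requires deciding where along the chain to insert the jump, needs the case split for $|y_+^{(k)}-y_-^{(k)}|<M_k$, and must survive components whose combinatorial ordering need not match a linear geometric chain (a broken cell can straddle several non-consecutive pieces). You would still have to verify all of this, and nothing in your write-up does. The paper eliminates the whole issue with a single observation: choose $e\perp y_+^{(k)}-y_-^{(k)}$, $|e|=1$, set $\gamma\sim M_kr_kk$ (large compared to $M_k$ and to the component diameters), and translate $\mathfrak{G}_i$ so that $y_-^{(k)}+(i-1)\gamma e$ lies in its shifted image, $i=2,\dots,n_{\rm G}-1$. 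Because $e\perp y_+^{(k)}-y_-^{(k)}$, \emph{every} anchor is automatically at distance $\ge(i-1)\gamma\ge\gamma$ both from $y_-^{(k)}$ and from $y_+^{(k)}$, and consecutive anchors are $\gamma$ apart; hence all shifted components are mutually $\ge M_k$ separated and simultaneously within $n_{\rm G}\gamma\lesssim\calE_k M_kr_kk$ of $y_-^{(k)}$, with no ordering, no jump, and no case distinction. You should replace the marching chain by this orthogonal-line placement; with that substitution the rest of your argument goes through as written.
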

\begin{proof}
We write $\tc{D}(\bar{x}) = \bar{x} + \{(\frac{1}{r_kk}\zf^i_1,(\zf^i)');\; i=1,\ldots,8\}$ for the corners of the cell with midpoint $\bar{x}\in\Lambda_{r_k,k}'$. Our strategy is to move back all pieces of the rod that are too far from $\{y_-^{(k)},y_+^{(k)}\}$. Fix $k\in\N$ and consider the undirected graph $\mathfrak{G}=(\mathfrak{V},\mathfrak{E})$, where $\mathfrak{V}=\Lambda_{r_k,k}$ and 
\begin{equation*}
\{x,x^\dagger\}\in\mathfrak{E}\Leftrightarrow(\exists\, \bar{x}\in\Lambda_{r_k,k}'\colon x,x^\dagger\in \tc{D}(\bar{x})\wedge |\mathscr{y}^{(k)}(x)-\mathscr{y}^{(k)}(x^\dagger)|< M_k).
\end{equation*}
Let $\mathfrak{G}_1,\mathfrak{G}_2,\dots,\mathfrak{G}_{n_{\rm G}}$ be the connected components of $\mathfrak{G}$, numbered \tc{in such a way} that $(I^-\times \overline{S^{\rm ext}}) \cap \Lambda_{r_k,k} \in \mathfrak{G}_1$ and $(I^+\times \overline{S^{\rm ext}}) \cap \Lambda_{r_k,k} \in \mathfrak{G}_{n_{\rm G}}$. Accordingly we partition $\{\zf^1,\zf^2,\dots,\zf^8\}=Z_1(\bar{x})\dot{\cup}Z_2(\bar{x})\dot{\cup}\cdots\dot{\cup}Z_{n_{\bar{x}}}(\bar{x})$ for every $\bar{x}\in\Lambda_{r_k,k}'$, where $Z_i(\bar{x})\neq\emptyset$, so that $\zf^j,\zf^m\in Z_\ell(\bar{x})$ for some $\ell\in\{1,2,\dots,n_{\bar{x}}\}$ if and only if there is $i_{\rm V}\in\{1,2,\dots,n_{\rm G}\}$ such that $\bar{x}+\frac{1}{k}\zf^j,\bar{x}+\frac{1}{k}\zf^m\in\mathfrak{V}_{i_{\rm V}}$, the set of vertices of $\mathfrak{G}_{i_{\rm V}}$. Then the induced components of atomic cells are far apart: for any $\bar{x}\in\Lambda_{r_k,k}'$ and $1\leq i<j\leq n_{\bar{x}}$, we have $\mathrm{dist}(y^{(k)}(\bar{x}+Z_i(\bar{x})),y^{(k)}(\bar{x}+Z_j(\bar{x})))\geq M_k$. 

Similarly as before we observe that the number of atomic cells `broken' by $\mathscr{y}^{(k)}$ is controlled by the energy so that the number $n_{\rm G}$ of connected components of $\mathfrak{G}$ satisfies a bound of the form 
\begin{equation*}
 n_{\rm G}\leq C_1 \calE_k(\mathscr{y}^{(k)},[-1,1]) 
\end{equation*}
with a constant $C_1>0$. 
 The construction further implies that the diameter of each component after deformation is bounded by
\begin{equation*}
 \mathrm{diam}\mathscr{y}^{(k)}(\tc{\mathfrak{V}_{i}}) 
 \leq C_2 M_k r_kk,\quad i=1,\ldots,n_{\rm G},
\end{equation*}
with \tc{another} constant $\tc{C_2}>0$.

For the first and last component we have \begin{equation*}
\mathrm{dist}(\mathscr{y}^{(k)}(\mathfrak{V}_1), \{y_-^{(k)}\}) \le \tc{C_3}M_kr_kk\quad\text{and}\quad\mathrm{dist}(\mathscr{y}^{(k)}(\mathfrak{V}_{n_{\rm G}}), \{y_+^{(k)}\}) \le \tc{C_3}M_kr_kk. 
\end{equation*}
If $n_{\rm G} \ge 3$, we can shift graph components $\mathfrak{G}_i$, $i=\tc{2},\ldots,n_{\rm G}-1$, without \tc{considerably} changing the total energy, provided we do not put the components at a distance less than $M_k$. Specifically, for $\tc{\gamma}=2M_k+(C_2+\tc{C_3}) M_k r_kk\le (2+C_2+\tc{C_3}) M_k r_kk$ and $|e|=1$ with $e \perp y_+^{(k)}-y_-^{(k)}$ the points $y_-^{(k)}+\tc{(i-1)}\gamma e$, $i=\tc{2},\ldots,n_{\rm G}-1$, have a distance $\ge\gamma$ from each other and from $\{y_+^{(k)},y_-^{(k)}\}$. We then define $\bar{\mathscr{y}}^{(k)}$ by shifting $\mathfrak{G}_i$ rigidly in such a way that $y_-^{(k)}+\tc{(i-1)}\gamma e \in \bar{\mathscr{y}}^{(k)}(\mathfrak{V}_i)$, $i=\tc{2},\ldots,n_{\rm G}-1$. 

Then indeed the shifted components have the required minimal distances and moreover 
\begin{equation*}
\mathrm{dist}(\mathscr{y}^{(k)}(\mathfrak{V}_i), \{y_-^{(k)}\}) \le n_{\rm G} \gamma \le C_1 \calE_k(\mathscr{y}^{(k)},[-1,1]) (2+C_2+\tc{C_3}) M_k r_kk,  
\end{equation*}
$i=\tc{2},\ldots,n_{\rm G}-1$. The assertion follows now by noting that $\bar{\mathscr{y}}^{(k)} = \mathscr{y}^{(k)}$ on $\mathfrak{V}_1 \cup \mathfrak{V}_{n_{\rm G}}$ \tc{and} 
\begin{equation*}
|\calE_k(\bar{\mathscr{y}}^{(k)},[-1,1])
-\calE_k(\mathscr{y}^{(k)},[-1,1])|
\leq \tc{C} \calE_k(\mathscr{y}^{(k)},[-1,1]) \frac{\tc{C_{\rm far}}}{kM_k},  
\end{equation*}
as only broken cells have been altered.
\end{proof}

\subsection{Construction of recovery sequences}
\begin{proof}[Proof of Theorem \ref{Gamma}(ii)]
It is known from the theory of $\Gamma$-convergence that for any $\e>0$ it suffices to find a recovery sequence with $\limsup_{k\goto\infty} kE^{(k)}(y^{(k)})\leq E_{\rm lim}(\yb,d_2,d_3)+\e$, which is trivial if $(\yb,d_2,d_3)\not\in\calA$. In the case that $(\yb,d_2,d_3)\in\calA$, let $(\sigma^i)_{i=0}^{\bar{n}_{\rm f}+1}$ be the partition of $[0,L]$ such that $\{\sigma^i\}_{i=1}^{\bar{n}_{\rm f}}=S_{\yb}\cup S_R$, where $S_R:=S_{\yb'}\cup S_{d_2}\cup S_{d_3}$. Depending on the assumptions on $\yb$, $d_2$, $d_3$, we treat two different cases separately.

First additionally suppose that $\yb|_{(\sigma^{i-1},\sigma^i)}\in\mathcal{C}^3((\sigma^{i-1},\sigma^i);\R^3)$, $\tc{d_s}|_{(\sigma^{i-1},\sigma^i)}\in \mathcal{C}^2((\sigma^{i-1},\sigma^i);\R^3)$, $s=2,3$, for all $i\in\{1,2,\dots,\bar{n}_{\rm f}+1\}$ and that $R=(\partial_1 \yb| d_2 | d_3)$ is constant on the sets $(\sigma^0,\sigma^0+\eta)$, $(\sigma^i-\eta, \sigma^i)$, $(\sigma^i,\sigma^i+\eta)$, $i\in\{1,2,\dots,\bar{n}_{\rm f}\}$, and $(\sigma^{\bar{n}_{\rm f}}-\eta,\sigma^{\bar{n}_{\rm f}})$ for some $\eta>0$. If $k\in\N$, write $I_0^k:=[-\frac{1}{k},\frac{1}{k}\lfloor k\sigma^1\rfloor]$, $I_i^k:=[\frac{1}{k}\lfloor k\sigma^i\rfloor+\frac{1}{k},\frac{1}{k}\lfloor k\sigma^{i+1}\rfloor]$ for $i=1,2,\dots,\bar{n}_{\rm f}-1$ and $I_{\bar{n}_{\rm f}}^k:=[\frac{1}{k}\lfloor k\sigma^{\bar{n}_{\rm f}}\rfloor+\frac{1}{k},L_k+\frac{1}{k}]$. 

Our analysis of elastic rods in \cite[Section~3.4]{elRods} shows that for a suitable choice of $\b(\cdot,x')\in\calC^1([0,L];\R^3)$ for each $x'\in\mathcal{L}^{\rm ext}$ and of $q\in\calC^2([0,L];\R^3)$, by setting 
\begin{equation}\label{eq:recov-seq-first}
\ybk(\xb):=\yb(\xb_1)+\frac{1}{k}\xb_2 d_2(\xb_1)+\frac{1}{k}\xb_3 d_3(\xb_1)+\frac{1}{k}q(\xb_1)+\frac{1}{k^2}\b(\xb),\quad\xb\in\{0,\tfrac{1}{k},\ldots,L_k\}\times\mathcal{L}^{\rm ext},
\end{equation}
appropriately extended and interpolated on $[-\frac{1}{k},\ldots,L_k+\tfrac{1}{k}]\times\overline{S^{\rm ext}}$, one has $\ybk\goto\yb$ in $L^2$ on $(0,L)\times S^{\rm ext}$ as well as 
\begin{equation*}
\sum_{x\in\{-\frac{1}{2k},L_k+\frac{1}{2k}\}\times\mathcal{L}'^{,\rm ext}} \tc{k}W_{\rm end}^{\tc{(k)}}\big(x_1,x',\bar{\nabla}_k\ybk(\xb)\big) 
\goto 0
\end{equation*}
and 
\begin{align}
\MoveEqLeft k\int_{I_i^k\times \overline{S^{\rm ext}}} W_{\rm tot}^{(k)}(\xb',\bar{\nabla}_k\ybk)\md\xb\nonumber\\
&\goto \frac{1}{2} \int_{\sigma^i}^{\sigma^{i+1}} \int_{S^{\rm ext}} Q_{\rm tot}\biggl(\xb', \T{R}(x_1) \Bigl(\frac{\pl R}{\pl\xb_1}(x_1)\T{(0, \bar{\xb}_2, \bar{\xb}_3)} + \frac{\pl q}{\pl\xb_1}(x_1) \Bigr) \T{e_1}\bar{\Id} \nonumber \\ 
&\qquad\qquad\qquad\qquad\quad + \T{R}(x_1) \frac{\pl R}{\pl\xb_1}(x_1) \bigl[\zf_1^i\T{(0, \zf_2^i,\zf_3^i)} \bigr]_{i=1}^8
+ \T{R}(x_1)\bigl(\d2d\b(\xb)|\d2d\b(\xb)\bigr)\biggr)\md\xb\label{eq:limsupPartConcl}\\
&\leq \frac{1}{2}\int_{\sigma^i}^{\sigma^{i+1}} \int_{S^{\rm ext}} Q_3^{\rm rel}\Bigl(\T{R}(\xb_1)\frac{\pl R}{\pl\xb_1}(\xb_1)\Bigr)\md\xb_1 + \e.\label{eq:est-Ek-elast} 
\end{align}
Indeed one can choose $\beta \equiv 0$ and $q \equiv 0$ on $(\sigma^{i},\sigma^{i}+\frac{\eta}{2})\cup(\sigma^{i+1}-\frac{\eta}{2},\sigma^{i+1})$ as $R$ by assumption is constant on a neighbourhood of these sets. So we have 
\begin{align*}
\ybk(x) 
= \begin{cases} 
\yb(\sigma^i+) + R(\sigma^i+)\tc{\T{(x_1 - \sigma^i,\xb')}} 
&\text{for } \xb_1\in(\sigma^i,\sigma^i+\tfrac{\eta}{2}), \\ 
\yb(\sigma^{i+1}-) + R(\sigma^{i+1}-)\tc{\T{(x_1 - \sigma^{i+1},\xb')}} 
&\text{for } \xb_1\in(\sigma^{i+1}-\tfrac{\eta}{2},\sigma^{i+1}).
  \end{cases}
\end{align*}

We now update $\ybk$ by replacing portions near the jumps $\sigma^i$ (and matching all parts by applying suitable rigid motions). Fix a sequence $(r_k)_{k=1}^{\infty}$ such that $r_k\goto 0$ and $r_kk\to \infty$. By Proposition~\ref{arbAtomDist} for each $i=1,\ldots,\bar{n}_{\rm f}$ we can choose $\mathscr{y}^{(k)}_i\colon ([-1,1]\times \overline{S^{\rm ext}})\goto\R^3$ such that $((r_k)_{k=1}^\infty,(k)_{k=1}^\infty,(\mathscr{y}^{(k)})_{k=1}^\infty)\in\mathcal{V}_{\yb(\sigma^i+)-\yb(\sigma^i-),(R(\sigma^i-))^{-1}R(\sigma^i+)}$ with 
\begin{equation*}
 \mathscr{y}^{(k)}(w_1,\xb')=R_\pm^{(k,i)}\T{\bigl(r_nw_1,\frac{1}{k}\xb'\bigr)}+y_\pm^{(k,i)}\text{ on }I^\pm\times \overline{S^{\rm ext}} 
\end{equation*}
for $R_\pm^{(k,i)}\goto R(\sigma^i\pm)$, $y_\pm^{(k,i)}\goto \yb^\pm$ which satisfies the energy estimate 
\begin{align}\label{eq:est-Ek-crack}
\limsup_{k\goto\infty}\calE_k\bigl(\mathscr{y}^{(k)}_i,[-1,1]\bigr)\leq \ph(\yb(\sigma^i+)-\yb(\sigma^i-),R(\sigma^i-)^{-1}R(\sigma^i+))+\e. \end{align}

\tc{Let $H_{\sigma,r}(\xb):=(\frac{1}{r}(\xb_1-\sigma),\xb')$ for any $r>0$.}  Noticing that $\yb^{(k)}$ is rigid near a jump as are the $\mathscr{y}^{(k)}_i$ near $\pm1$, we can now define a modification $\ybk_{\rm tot}$ of $\ybk$ by setting  
\begin{equation*}
\yb_{\rm tot}^{(k)}(\xb)=\begin{cases}
\yb^{(k)}(\xb) & -\frac{1}{k}\leq \xb_1\leq \tc{\sigma^1_{k}-r_k},  \\
O_-^{(k,i)}\mathscr{y}_i^{(k)}\circ H_{\sigma^i_{k},r_k}(\xb)+c_-^{(k,i)} & \sigma^i_{k}-r_k<\xb_1\leq \sigma^i_{k}+r_k,\;i=1,\ldots,\bar{n}_{\rm f},\\
O_+^{(k,i)}\yb^{(k)}(\xb)+c_+^{(k,i)} & \sigma^i_{k}+r_k <\xb_1 \le \sigma^{i+1}_k-r_k,\;i=1,\ldots,\bar{n}_{\rm f}-1, \\ 
O_+^{(k,\bar{n}_{\rm f})}\yb^{(k)}(\xb)+c_+^{(k,\bar{n}_{\rm f})} & \sigma^{\bar{n}_{\rm f}}_{k}+r_k <\xb_1 \le L_k+\frac{1}{k}, 
\end{cases}
\end{equation*}
where \tc{$O_\pm^{(k,i)} \in {\rm SO}(3)$ and $c_\pm^{(k,i)} \in\R^3$} are such that 
\begin{equation*}
O_-^{(k,i)}\mathscr{y}_{i}^{(k)}\circ H_{\sigma^{i}_{k},r_k}+c_-^{(k,i)}
=\begin{cases}
O_+^{(k,i-1)}\yb^{(k)}+c_+^{(k,i-1)} 
& \text{on }(\sigma^{i}_{k}-r_k,\sigma^{i}_{k}-\frac{3}{4}r_k) \times S^{\rm ext}, \\ 
O_+^{(k,i)}\yb^{(k)}+c_+^{(k,i)} 
& \text{on }(\sigma^{i}_{k}+\frac{3}{4}r_k,\sigma^{i}_{k}+r_k) \times S^{\rm ext}
 \end{cases}
\end{equation*}
for $i=1,\ldots,\bar{n}_{\rm f}$ (and we have set $O_+^{(k,0)}:=\Id$, $c_+^{(k,0)}:=0$). Since $R_\pm^{(k,i)}\goto R(\sigma^i\pm)$, $y_\pm^{(k,i)}\goto \yb^\pm$ we get $O_\pm^{(k,i)} \goto \Id$ and $c_\pm^{(k,i)} \goto 0$ as $k\goto\infty$. Thus we still have $\ybk_{\rm tot}\goto\yb$ in $L^2((0,L)\times S^{\rm ext})$. By \eqref{eq:est-Ek-elast} and \eqref{eq:est-Ek-crack} \tc{the sequence} $\ybk_{\rm tot}$ satisfies the envisioned energy estimate 
\begin{equation*}
 \limsup_{k\goto\infty} kE^{(k)}(\ybk_{\rm tot})\leq E_{\rm lim}(\yb,d_2,d_3)+C\e.
\end{equation*}

It remains to observe that in case \tc{(W9)} holds true with some sequence of rate functions $(M_k)_{k=1}^{\infty}$ and \tc{$||\yb||_\infty \le M$}, then for any $(\zeta_k)_{k=1}^{\infty}\subset(0,1)$ with $\zeta_k \searrow 0$ and $\zeta_k\tc{/M_k} \to \infty$ one can choose \tc{$\ybk_{\rm tot}$} such that $\tg{||\tc{\ybk_{\rm tot}}||_\infty} \le M+\zeta_k$. This is clear by construction for $\ybk$ in \eqref{eq:recov-seq-first} instead of $\ybk_{\rm tot}$ since $\zeta_k \gg \frac{1}{k}$. The bound is indeed preserved by the passage to $\ybk_{\rm tot}$ due to Proposition~\ref{LinftyBdd} once we have $r_kM_kk \ll \zeta_k$. As Proposition~\ref{arbAtomDist} allows us to choose $r_k \searrow 0$ as fast as we wish as long as $r_kk \to \infty$, the claim follows.

Now let us assume that $\yb$, $d_2$, $d_3$ are general as in Theorem \ref{Gamma}(ii). Interestingly, a related approximation problem was treated recently by P. Hornung. \cite{hornung} However, a more elementary construction is sufficient in our case. By a density argument, it is enough to show that there are sequences $(\yb_{\rm tot}^{(j)})_{j=1}^\infty$, $(d_s^{(j)})_{j=1}^\infty$, $s=2,3$, such that:
\begin{enumerate}[(i)]
\item for every $j$ and all $i\in\{1,2,\dots,\bar{n}_{\rm f}+1\}$, the functions satisfy $\yb_{\rm tot}^{(j)}|_{(\sigma^{i-1},\sigma^i)}\in\mathcal{C}^3((\sigma^{i-1},\sigma^i);\R^3)$, $d_2^{(j)}|_{(\sigma^{i-1},\sigma^i)},d_3^{(j)}|_{(\sigma^{i-1},\sigma^i)}\in \mathcal{C}^2((\sigma^{i-1},\sigma^i);\R^3)$ with $R_{\rm tot}^{(j)}=(\pl_{\xb_1}\yb_{\rm tot}^{(j)}|d_2^{(j)}|d_3^{(j)})$ constant on $(\sigma^i-\eta_j,\sigma^i)$ and on $(\sigma^i,\sigma^i+\eta_j)$, $\eta_j>0$, and $(\yb_{\rm tot}^{(j)},d_2^{(j)},d_3^{(j)})\in\calA$; 
\item $\yb_{\rm tot}^{(j)}\goto\yb$ in $L^2(\tc{(0,L)};\R^3)$, $R_{\rm tot}^{(j)}\goto R=(\pl_{\xb_1}\yb|d_2|d_3)$ in $H^1((\sigma^{i-1},\sigma^i);\R^{3\times 3})$ for any $i\in\{1,\dots,\bar{n}_{\rm f}+1\}$;
\item $E_{\rm lim}(\yb_{\rm tot}^{(j)},d_2^{(j)},d_3^{(j)})\goto E_{\rm lim}(\yb,d_2,d_3)$, $j\goto\infty$.
\end{enumerate}

Let $(\eta_j)$ be a positive null sequence. For each $i\in\{1,2,\dots,\bar{n}_{\rm f}+1\}$ we find an approximating sequence $(\tilde{R}^{(j)}|_{(\sigma^{i-1},\sigma^i)})\subset\mathcal{C}^2([\sigma^{i-1},\sigma^i];\R^{3\times 3})$, such that $\tilde{R}^{(j)}$ is constant on $(\sigma^{i-1},\sigma^{i-1}+\eta_j)$ and $(\sigma^{i}-\eta_j,\sigma^{i})$ and $\tilde{R}^{(j)}\goto R$ in $H^1((\sigma^{i-1},\sigma^i);\R^{3\times 3})$ so that $\tilde{R}^{(j)}\goto R$ uniformly in $(\sigma^{i-1},\sigma^i)$ by the Sobolev embedding theorem. Then we project $\tilde{R}^{(j)}(\xb_1)$ for every $\xb_1\in(\sigma^{i-1},\sigma^i)$ smoothly onto ${\rm SO}(3)$ and get a sequence $\{R^{(j)}\}\subset \calC^1([\sigma^{i-1},\sigma^i];\R^{3\times 3})$ of mappings with values in ${\rm SO}(3)$. This implies that $R^{(j)}\goto R$ in $H^1((\sigma^{i-1},\sigma^i);\R^{3\times 3})$ for $i=1,2,\dots,\bar{n}_{\rm f}+1$.

We write $R^{(j)}=(\pl_{\xb_1}\yb^{(j)}|\bar{d}_2^{(j)}|\bar{d}_3^{(j)})$ for $\bar{d}_2^{(j)},\bar{d}_3^{(j)}\in\calC^2([\sigma^{i-1},\sigma^i];\R^3)$ and $\yb^{(j)}\in\calC^3([\sigma^{i-1},\sigma^i];\R^3)$ such that $\yb^{(j)}(\sigma^{i-1}+)=\yb(\sigma^{i-1}+)$; thus we have $(\yb^{(j)}|\bar{d}_2^{(j)}|\bar{d}_3^{(j)})\in\calA$. To avoid issues with crack terms, we rigidly move the pieces of the rod so as to obtain a $j$-independent contribution from the cracks that is exactly equal to the limiting crack energy. We set  
\begin{equation*}
\yb_{\rm tot}^{(j)}(\xb)
=O^{(j,i)}\yb^{(j)}(\xb)+c^{(j,i)} 
\quad\text{and}\quad 
d_s^{(j)}=O^{(j,i)}\bar{d}_s^{(j)}, \;s=2,3,
\end{equation*}
if $\sigma^{i-1} < \xb_1 < \sigma^i,\ i=1,2,\dots,\bar{n}_{\rm f}+1$, where $O^{(j,i)} \in {\rm SO}(3)$ and $c^{(j,i)}\in\R^3$ are defined consecutively by $O^{(j,0)} = \Id$, $c^{(j,0)}=0$, and requiring that 
\begin{equation*}
 \yb_{\rm tot}^{(j)}(\sigma^i+)-\yb_{\rm tot}^{(j)}(\sigma^i-)
 = \yb(\sigma^i+)-\yb(\sigma^i-)
 \quad\text{and}\quad
 [R_{\rm tot}^{(j)}(\sigma^i-)]^{-1}R_{\rm tot}^{(j)}(\sigma^i+)
 =[R(\sigma^i-)]^{-1}R(\sigma^i+)
\end{equation*}
for $i=1,\ldots,\bar{n}_{\rm f}$, $R_{\rm tot}^{(j)}=(\pl_{\xb_1}\yb_{\rm tot}^{(j)}|d_2^{(j)}|d_3^{(j)}),\; j\in\N$. By frame indifference, the elastic energy is not changed by such an operation. Noting that $O^{(j,i)} \to \Id$ and $c^{(j,i)}\to 0$ for $j\goto\infty$, \tc{we see that} these mappings are such that (i)–(iii) hold (for (iii) observe that the integral in \eqref{eq:limsupPartConcl} behaves continuously in $R$ with respect to the topologies used here).
\end{proof}

\section{Examples}

Finally, we list a few examples of mass-spring models treatable by our methods: a model with rather general pair interactions, the so-called truncated and shifted Lennard-Jones potential (LJTS), `truncated harmonic spring', and a simplified highly brittle model.

\begin{example}\label{genPot}
As general nearest-neighbour (NN) and next-to-nearest-neighbour (NNN) interactions on a cubic lattice, we can consider
\begin{equation}\label{eq:EkPair}
E^{(k)}(y)=\frac{1}{2}\sum_{\substack{\hat{x}_*,\hat{x}_{**}\in\hat{\Lambda}_k\\ |\hat{x}_*-\hat{x}_{**}|=1}}W_{\rm NN}^{(k)}(|\hat{y}(\hat{x}_*)-\hat{y}(\hat{x}_{**})|)+\frac{1}{2}\sum_{\substack{\hat{x}_*,\hat{x}_{**}\in\hat{\Lambda}_k\\ |\hat{x}_*-\hat{x}_{**}|=\sqrt{2}}}W_{\rm NNN}^{(k)}\bigl(\frac{|\hat{y}(\hat{x}_*)-\hat{y}(\hat{x}_{**})|}{\sqrt{2}}\bigr)\tb{+\mathcal{X}_k(y)},
\end{equation}
where $y\colon\Lambda_k\goto\R^3$, $\hat{y}(\hat{x})=\tb{ky(\frac{1}{k}\hat{x})}$, $\hat{x}\in\hat{\Lambda}_k$, and $W_{\rm NN}^{(k)}$, $W_{\rm NNN}^{(k)}$ satisfy the following list of assumptions:
\begin{enumerate}
\item[(P1)] $W_{\rm NN(N)}^{(k)}\colon [0,\infty)\goto [0,\infty]$ is continuous and finite on $(0,\infty)$ and $W_{\rm NN(N)}^{(k)}(r)=0$ if and only if $r=1$;
\item[(P2)] there is a sequence $(c_{\rm f}^{(k)})_{k=1}^\infty$ with $c_{\rm f}^{(k)}\searrow 0$ and $\lim_{k\goto\infty}k[c_{\rm f}^{(k)}]^2\in (0,\infty)$ such that
$$W_{\rm NN(N)}^{(k)}(r)=W_{\rm 0NN(N)}(r)$$
for all $r\in(1-c_{\rm f}^{(k)},1+c_{\rm f}^{(k)})$, where $W_{\rm 0NN(N)}$ is of class $\mathcal{C}^2$ and $W_{0NN(N)}''(1)>0$;
\item[(P3)] $W_{\rm NN(N)}^{(k)}(r)=\bar{W}_{\rm NN(N)}^{(k)}(r)$ if $r\in[0,1-c_{\rm f}^{(k)}]\cup[1+c_{\rm f}^{(k)},\infty)$; the function $\bar{W}_{\rm NN(N)}^{(k)}$ is bounded from below by $\bar{c}_{\rm NN(N)}^{(k)}$ such that $k\bar{c}_{\rm NN(N)}^{(k)}\goto \bar{c}_{\rm NN(N)}>0$ and $(k+1)W_{\rm NN(N)}^{(k+1)}\geq kW_{\rm NN(N)}^{(k)}$ for every $k\in\N$;
\item[(P4)]\tg{$\bar{W}_{\rm NN(N)}^{(k)}(r)=\omega_{\rm NN(N)}^{(k)}+\frac{1}{k}\mathscr{r}_{\rm NN(N)}(r)$ if $r\geq k\bar{M}_k$ for $\bar{M}_k\goto 0$ with $k\bar{M}_k\goto\infty$, $\mathscr{r}_{\rm NN(N)}(r)=O(r^{-1})$, $r\goto\infty$, and $\lim_{k\goto\infty}k\omega_{\rm NN(N)}^{(k)}\in (0,\infty)$.}
\end{enumerate}
\tb{To guarantee preservation of orientation, in \eqref{eq:EkPair} we have included a nonnegative term $\mathcal{X}_k(y)$ that gives rise to $\chi^{(k)}$ below.} \tg{Thus $E^{(k)}$ can be written in the form \eqref{eq:Ek} as a sum of cell energies with}
\begin{equation}\label{eq:genPot}
W_{\rm cell}^{(k)}(\vec{y})=\frac{1}{8}\sum_{|\zf^i-\zf^j|=1}W_{\rm NN}^{(k)}(|\hat{y}_i-\hat{y}_j|)+\frac{1}{4}\sum_{|\zf^i-\zf^j|=\sqrt{2}}W_{\rm NNN}^{(k)}\Bigl(\frac{|\hat{y}_i-\hat{y}_j|}{\sqrt{2}}\Bigr)+\tg{\chi^{(k)}(\vec{y})}
\end{equation}
for $\vec{y}=(\hat{y}_1|\cdots|\hat{y}_8)\in\R^{3\times 8}$ and \tg{the functions $W_{\rm surf}^{(k)}$, $W_{\rm end}^{(k)}$ constructed in a similar manner to account for surface contributions to atomic bonds lying on the rod's boundary (see \cite[Subsection~2.4]{elRods}).} The frame-indifferent term $\chi^{(k)}$, $C/k\geq\chi^{(k)}\geq 0$, penalizes deformations that are not locally orientation-preserving, i.e.\ it is greater than or equal to \tb{$\bar{c}/k$, $\bar{c}>0$,} on a \tb{$k$-independent} neighbourhood of ${\rm O}(3)\bar{\Id}\setminus \bar{\rm SO}(3)$ and vanishes \tb{otherwise} (see \cite{BS06,FrdS15}). \tb{An alternative to penalties such as $\mathcal{X}_k$ and $\chi^{(k)}$ is cell energies with ${\rm O}(3)$-invariance, see \cite[Section~2.4]{BrS19}.}

It can be shown that potentials \tg{$W_{\rm NN}^{(k)}$, $W_{\rm NNN}^{(k)}$} as above make the corresponding $W_{\rm cell}^{(k)}$ admissible, i.e. \tc{(W1)--(W6)}, and \tc{(W9)} hold (\tc{(W9) is a consequence of (P4)}). In particular, the \textit{truncated and splined Lennard-Jones potential} from \cite{HEvans} and versions thereof fall under this case, with appropriately chosen parameters.
\end{example}

\begin{example}
Let
\begin{equation*}
W_{\rm LJ}(r)=d\left(\frac{1}{r^{12}}-\frac{2}{r^6}\right)+d,
\end{equation*}
where $r\in (0,\infty)$ and $d>0$ is a parameter (note that $\lim_{r\goto\infty}W_{\rm LJ}(r)=d$ and $\argmin_{r>0}W_{\rm LJ}(r)=1$). Further we set
\begin{equation*}
W_{\rm LJTS}^{(k)}(r)=\begin{cases}
W_{\rm LJ}(r) & r\in (0,1)\\
\min\{W_{\rm LJ}(r),\frac{1}{k}\} & r\in [1,\infty)
\end{cases}.
\end{equation*}
We again consider pair interactions, so the cell energy function takes the form \eqref{eq:genPot} with $W_{\rm LJTS}^{(k)}$ in place of $W_{\rm NN}^{(k)}$ and $W_{\rm NNN}^{(k)}$. The property $(k+1)W_{\rm cell}^{(k+1)}\geq kW_{\rm cell}^{(k)}$ can be proved by discussing for each bond if it is deformed elastically or if the truncation is active. Computing the value of $r$ beyond which truncation applies in $W_{\rm LJTS}^{(k)}$, we observe that assumptions \tc{(W3)} and \tc{(W5)} hold with $c_{\rm frac}^{(k)}=[\sqrt[6]{d+\sqrt{d/k}}-\sqrt[6]{d-(1/k)}]/(2\sqrt[6]{d-(1/k)})$ and $W_0$ being the sum of Lennard-Jones interactions with no truncation. By the properties of $\nabla^2 W_0(\bar{\Id})$, the estimate $\hat{C}W_0(\vec{y})\geq\mathrm{dist}^2(\bar{\nabla}\hat{y},{\rm SO}(3))$ holds with a constant $\hat{C}>0$ and the usual symbol $\bar{\nabla}\hat{y}$ denoting the discrete gradient of $\vec{y}\in\R^{3\times 8}$ (cf.~\cite[Lemma~3.2~and~Section~7]{BS06}).

Moreover, we claim that if $\mathrm{dist}(\bar{\nabla}\hat{y}, \bar{\rm SO}(3))>c_{\rm frac}^{(k)}$, then $W_{\rm cell}^{(k)}(\vec{y})\geq\min\{1/(8k),[c_{\rm frac}^{(k)}]^2/\hat{C}\}=:\bar{c}_1^{(k)}$. Indeed, as long as $W_{\rm cell}^{(k)}(\vec{y})<\bar{c}_1^{(k)}$, the cutoff is not active in any interatomic bond (the arguments of $W_{\rm LJTS}^{(k)}$ are close enough to $1$) and thus $W_{\rm cell}^{(k)}(\vec{y})=W_0(\vec{y})$ so that $\mathrm{dist}(\bar{\nabla}\hat{y}, \bar{\rm SO}(3))\leq c_{\rm frac}^{(k)}$.  This shows the second part of assumption \tc{(W5)}.
\end{example}

\begin{example}
For the functions
\begin{equation*}
W_{\rm harm}(r)=K(r-1)^2,\quad W_{\rm TH}^{(k)}(r)=\begin{cases}
\min\{W_{\rm harm}(r),\frac{c_{\rm TH}^+}{k}\} & r\geq 1\\
\min\{W_{\rm harm}(r),\frac{c_{\rm TH}^-}{k}\} & r< 1
\end{cases}
\end{equation*}
with positive constants $K$, $c_{\rm TH}^-$, $c_{\rm TH}^-$, one can similarly find $c_{\rm frac}^{(k)}$ and $\bar{c}_1^{(k)}$ so that $W_{\rm cell}^{(k)}$ defined by \eqref{eq:genPot} with $W_{\rm NN}^{(k)}$ and $W_{\rm NNN}^{(k)}$ replaced by $W_{\rm TH}^{(k)}$ is an admissible cell energy.
\end{example}

\begin{example}
Another simplified model can be obtained if we set
\begin{equation*}
W_{\rm cell}^{(k)}(\vec{y})=\min\{W_0(\vec{y}),\bar{c}_1^{(k)}\}
\end{equation*}
and $c_{\rm W}$, $\bar{c}_1^{(k)}$, and frame-indifferent $W_0$ are as in assumptions \tc{(W3), (W5)}. This corresponds to $\bar{W}^{(k)}\equiv\bar{c}_1^{(k)}$ and the cell formula then reduces to $\ph(u,R)\equiv (\sharp\mathcal{L}')\tg{c_{\rm W}}\bar{c}_1$, where $\bar{c}_1=\lim_{k\goto\infty}k\bar{c}_1^{(k)}$, for any $u\in\R^3$ and $R\in{\rm SO}(3)$ except $(u,R)=(0,\Id)$ (specifically, we use sublevel sets of $W_0$ instead of $\mathrm{dist}^2(\bar{\nabla}\hat{y},\bar{\rm SO}(3))$ to define the threshold distinguishing between $W_0$ and $\bar{W}^{(k)}$, but our findings remain valid in this case as well).
\end{example}

\section{Explicit calculation of crack energy}

For mass-spring models, it is possible to simplify further \eqref{eq:phi} in specific situations.
\begin{prop}\label{explCrack}
If \tg{$E^{(k)}$ is given by \eqref{eq:EkPair}} and assumptions \textup{(P1)--(P4)} hold, together with
\begin{enumerate}
\item[$\mathrm{(P5)}$] $\lim_{k\goto\infty}k\bar{W}_{\rm NN(N)}^{(k)}(r_k)=\omega_{\rm NN(N)}$ for any sequence $r_k\goto\infty$,
\end{enumerate}
for $W_{\rm NN}^{(k)}$ and $W_{\rm NNN}^{(k)}$, then
\begin{equation*}
\ph(u,R)=(\sharp\mathcal{L})\omega_{\rm NN}+\sharp\{(\xb',\xb_*')\in\mathcal{L}^2;\; |\xb'-\xb_*'|=1\}\omega_{\rm NNN}
\end{equation*}
for any $0\neq u\in\R^3$ and $R\in{\rm SO}(3)$.
\end{prop}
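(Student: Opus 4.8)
Write $\yb^\pm\in\R^3$, $R^\pm\in{\rm SO}(3)$ for fixed representatives with $\yb^+-\yb^-=u$, $(R^-)^{-1}R^+=R$, and abbreviate $\Phi_0:=(\sharp\mathcal{L})\omega_{\rm NN}+\sharp\{(\xb',\xb_*')\in\mathcal{L}^2;\,|\xb'-\xb_*'|=1\}\,\omega_{\rm NNN}$ for the claimed value; the plan is to prove $\ph\le\Phi_0$ and $\ph\ge\Phi_0$ separately. Throughout I use that, up to the surface/end bookkeeping of \cite[Subsection~2.4]{elRods}, $\calE_{k}(\,\cdot\,,[-1,1])$ equals $k$ times the pair energy \eqref{eq:EkPair} restricted to the bonds joining atoms sitting over $\mathcal{L}$, so that each NN bond enters through $W^{(k)}_{\rm NN}$ and each NNN bond through $W^{(k)}_{\rm NNN}$ with total weight exactly one, while the orientation penalty $\chi^{(k)}$ only adds a nonnegative amount; I also use that a bond stretched beyond length $1+c_{\rm f}^{(k)}$ lies in the inelastic regime, where $W^{(k)}_{\rm NN(N)}=\bar W^{(k)}_{\rm NN(N)}$, so (P5) applies.

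\emph{Upper bound.} Pick any $r_k\searrow0$ with $r_kk\to\infty$ and let $\mathscr{y}^{(k)}\in\mathrm{PAff}(\Lambda_{r_k,k})$ be the interpolation of the lattice map equal to $R^-\T{(r_kw_1,\tfrac1k\xb')}+\yb^-$ on $\{w_1\le0\}$ and to $R^+\T{(r_kw_1,\tfrac1k\xb')}+\yb^+$ on $\{w_1>0\}$; this lies in $\mathcal{V}_{\yb^+-\yb^-,(R^-)^{-1}R^+}$ with constant rigid data. Every cell has zero energy except the one cell-layer straddling $\{w_1=0\}$; since $\mathcal{L}'^{,\rm ext}$ is a fixed finite set, a short computation shows that the bonds crossing this layer — precisely the $\sharp\mathcal{L}$ axial NN bonds and the $\sharp\{(\xb',\xb_*')\in\mathcal{L}^2;|\xb'-\xb_*'|=1\}$ diagonal NNN bonds, every other bond keeping length $1$ or $\sqrt2$ — are stretched to length $k|u|+O(1)$, and that $\chi^{(k)}$ vanishes there because the deformed cell is far from $\mathrm{O}(3)\bar{\Id}$. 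By (P4)–(P5) each such bond contributes, after the factor $k$, $\omega_{\rm NN}$ resp.\ $\omega_{\rm NNN}$ in the limit, hence $\calE_k(\mathscr{y}^{(k)},[-1,1])\to\Phi_0$ and $\ph(u,R)\le\Phi_0$.

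\emph{Lower bound.} Let $((r_n),(k_n),(\mathscr{y}^{(k_n)}))\in\mathcal{V}_{\yb^+-\yb^-,(R^-)^{-1}R^+}$ be $\e$-almost optimal in \eqref{eq:phi}. As each broken slice costs at least $k_n\bar c_1^{(k_n)}\to\bar c_1>0$, the number of broken slices is bounded by some $N_{\rm f}$ independent of $n$, and deleting them splits the lattice into at most $N_{\rm f}+1$ consecutive \emph{rigid pieces} (maximal runs of intact slices), inside each of which $\mathrm{dist}(\bar{\nabla}\,\cdot\,,\bar{\rm SO}(3))\le c_{\rm frac}^{(k_n)}\to0$ by \eqref{eq:nbhd-rig-est}, so every NN and every NNN bond there has deformed length $<2$ for $n$ large. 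In the rescaled variables the rigid data on $I^\pm$ place, for any fixed $\xb'\in\mathcal{L}$, a left-end and a right-end atom of the column over $\xb'$ at mutual distance $\ge k_n|y^{(k_n)}_+-y^{(k_n)}_-|-O(r_nk_n)\ge\tfrac{|u|}{4}k_n$ once $n$ is large. Along this column the axial bonds lying in rigid pieces account for at most $2(N_{\rm f}+1)(2r_nk_n)=o(k_n)$ of that distance, so the axial bonds in the at most $N_{\rm f}$ broken slices of the column account for at least $\tfrac{|u|}{8}k_n$, whence one of them, $b(\xb')$, has deformed length $\ge\tfrac{|u|}{8N_{\rm f}}k_n\to\infty$. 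The bonds $b(\xb')$, $\xb'\in\mathcal{L}$, are pairwise distinct, so (P5) gives $k_n\sum_{\text{NN bonds}}W^{(k_n)}_{\rm NN}\ge k_n\sum_{\xb'\in\mathcal{L}}W^{(k_n)}_{\rm NN}(|b(\xb')|)\to(\sharp\mathcal{L})\omega_{\rm NN}$. For the NNN part, fix an unordered pair $\{\xb',\xb_*'\}\subset\mathcal{L}$ with $|\xb'-\xb_*'|=1$ and run the two ``diagonal'' zig-zag paths from the left end to the right end that alternate between the columns over $\xb'$ and over $\xb_*'$, one starting over $\xb'$ and the other over $\xb_*'$; these use disjoint sets of NNN bonds, their endpoints are again at distance $\ge\tfrac{|u|}{4}k_n$, and their NNN bonds inside rigid pieces are $<2$ long, so the same accounting yields on each path a broken NNN bond of deformed length $\to\infty$ — two distinct such bonds per unordered pair, i.e.\ $\sharp\{(\xb',\xb_*')\in\mathcal{L}^2;|\xb'-\xb_*'|=1\}$ in all — hence $k_n\sum_{\text{NNN bonds}}W^{(k_n)}_{\rm NNN}\to\sharp\{(\xb',\xb_*')\in\mathcal{L}^2;|\xb'-\xb_*'|=1\}\,\omega_{\rm NNN}$. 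Adding the two estimates and letting $\e\to0$ gives $\ph(u,R)\ge\Phi_0$.

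\emph{Main obstacle.} The delicate point is the lower bound: a priori the fracture may spread over several broken slices whose intermediate ``free'' layers wiggle arbitrarily, so there need not be a single planar cut across which all $\sharp\mathcal{L}$ NN bonds (and all NNN bonds) are simultaneously stretched. The column-wise bound and its zig-zag variant sidestep this by locating, independently in each atomic column, one over-stretched bond, relying only on the crude facts that intact slices keep bond lengths below $2$ and that each broken slice carries energy at least of order $\bar c_1/k_n$; verifying that the chosen bonds are pairwise distinct and that (P5) applies to them uniformly is then routine.
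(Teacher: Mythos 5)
Your proof is correct and follows essentially the same two-sided strategy as the paper: the upper bound via the obvious one-slice crack competitor, and the lower bound by locating, in every atomic column over $\xb'\in\mathcal{L}$ (and every zig-zag over a nearest-neighbour pair in $\mathcal{L}$), an NN (resp.\ NNN) bond of diverging rescaled length, then applying (P5) bond by bond. The only real difference is the mechanism for producing the over-stretched bond: the paper's Claim~1 argues by contradiction that if every axial bond in the column had rescaled length $\le \tilde T$ then the column could only span a distance $O(r_{k_n}\tilde T)\to 0$, contradicting $u\neq 0$, and does not need to invoke the broken-slice count; you instead import the bound $N_{\rm f}$ on broken slices from the compactness analysis, subtract the $O(r_nk_n)$ contributed by intact bonds from the total span $\approx k_n|u|$, and pigeonhole to get a single bond of length $\ge ck_n$. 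Both are sound and both give a bond length $\to\infty$, which is all (P5) needs; the paper's version is a bit leaner (no broken-slice bookkeeping), while yours is more explicitly quantitative. One minor slip that doesn't affect the conclusion: your bound ``$2(N_{\rm f}+1)(2r_nk_n)$'' for the contribution of intact bonds is an overcount — the column contains only about $2r_nk_n$ bonds in total, so $4r_nk_n$ already suffices — but it is still $o(k_n)$ and the argument goes through.
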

\begin{proof}
\textit{Step }1. The mapping $\mathscr{v}^{(k)}$ defined as
\begin{gather*}
\mathscr{v}^{(k)}(w_1,\xb')=\begin{cases}
R_-^{(k)}\T{(r_kw_1,\frac{1}{k}\xb')}+y_-^{(k)} & \text{ on }[-1,0]\times S^{\rm ext}\\
R_+^{(k)}\T{(r_kw_1,\frac{1}{k}\xb')}+y_+^{(k)} & \text{ on }[r_k^{-1}k^{-1},1]\times S^{\rm ext},
\end{cases}\\
R_\pm^{(k)}\in {\rm SO}(3),\; y_\pm^{(k)}\in\R^3,\; (R_-^{(k)})^{-1}R_+^{(k)}\goto R,\; y_+^{(k)}-y_-^{(k)}\goto u;\; r_k^{-1}\goto\infty\text{ as }o(k),
\end{gather*}
and interpolated to be piecewise affine ($\mathscr{v}^{(k)}\in\mathrm{PAff}(\tc{\Lambda_{r_k,k}})$) has the property that
\begin{equation*}
\lim_{k\goto\infty}\calE_k(\mathscr{v}^{(k)},[-1,1])=(\sharp\mathcal{L})\omega_{\rm NN}+\sharp\{(\xb',\xb_*')\in\mathcal{L}^2;\; |\xb'-\xb_*'|=1\}\omega_{\rm NNN}.
\end{equation*}
Thus we find that $\ph(u,R)$ is less than or equal to the right-hand side in the above equation.

\textit{Step }2. Given $\e>0$, we find sequences $((r_k)_{k=1}^\infty,(k)_{k=1}^\infty,(\mathscr{y}^{(k)})_{k=1}^\infty)\in\mathcal{V}_{u,R}$ such that
\begin{equation}\label{eq:msPhiApprox}
\limsup_{k\goto\infty}\calE_k(\mathscr{y}^{(k)},[-1,1])\leq \ph(u,R)+\e,
\end{equation}
using Proposition \ref{arbAtomDist}. Set
\tb{\begin{equation*}
\bar{W}_1^{(k)}:=\frac{1}{r_kk}\Bigl\{\bigl\lfloor-r_kk\bigl\rfloor+\frac{3}{2},\bigl\lfloor-r_kk\bigl\rfloor+\frac{5}{2},\dots,\bigl\lfloor r_kk\bigl\rfloor-\frac{1}{2}\Bigr\}.
\end{equation*}}
We show that the nature of our pair interactions causes at least one large gap in the spacing of atoms within each fibre which the rod consists of.

\textit{Claim} 1: For each $\xb'\in\mathcal{L}$ and every $T>1$ there is a $k_0\in\N$ such that whenever $k\geq k_0$, we can find some $\bar{w}_1\in \tb{\bar{W}_1^{(k)}}$ satisfying
\begin{equation*}
\frac{|\mathscr{y}^{(k)}(\bar{w}_1+\frac{1}{2r_kk},\xb')-\mathscr{y}^{(k)}(\bar{w}_1-\frac{1}{2r_kk},\xb')|}{1/k}>T.
\end{equation*}
 \textit{Proof of claim: }If the converse were true, there would be a $\tilde{T}>1$ and an increasing sequence $\{k_n\}_{n=1}^\infty\subset\N$ such that for all $\bar{w}_1\in\tb{\bar{W}_1^{(k_n)}}$:
\begin{equation*}
k_n|\mathscr{y}^{(k_n)}(\bar{w}_1+\frac{1}{2r_{k_n}k_n},\xb')-\mathscr{y}^{(k_n)}(\bar{w}_1-\frac{1}{2r_{k_n}k_n},\xb')|\leq \tilde{T}.
\end{equation*}
\tb{Then we would get
\begin{multline*}
0\neq |u|=\bigl|\mathscr{y}^{(k_n)}(\max \bar{W}_1^{(k_n)}+\frac{1}{2r_{k_n}k_n},\xb')-\mathscr{y}^{(k_n)}(\min \bar{W}_1^{(k_n)}-\frac{1}{2r_{k_n}k_n},\xb')\bigr|+o_{n\goto\infty}(1)\\
\leq \sum_{\bar{w}_1\in \bar{W}_1^{(k_n)}}\bigl|\mathscr{y}^{(k_n)}(\bar{w}_1+\frac{1}{2r_{k_n}k_n},\xb')-\mathscr{y}^{(k_n)}(\bar{w}_1-\frac{1}{2r_{k_n}k_n},\xb')\bigr|+o_{n\goto\infty}(1)\leq 2r_{k_n}\frac{k_n}{k_n}\tilde{T}+o_{n\goto\infty}(1)\goto 0,
\end{multline*}
which is a contradiction.}

\textit{Step }3. A similar argument applies to NNN bonds (`diagonal springs') -- if we use zigzag chains of atoms instead of straight atomic fibres. We state the corresponding claim without proof.

\textit{Claim} $\sqrt{2}$: For each $(\xb',\xb_*')\in\mathcal{L}\times\mathcal{L}$ with $|\xb_*'-\xb'|=1$ and every $T>1$ there is a $k_0\in\N$ such that whenever $k\geq k_0$, we can find a $j\in\N$ and $\bar{w}_1=\tb{\frac{1}{r_kk}(\lfloor -r_kk\rfloor+\frac{2j+1}{2})\in \bar{W}_1^{(k)}}$ such that $\mathscr{y}^{(k)}$ from \eqref{eq:msPhiApprox} satisfies:
\begin{equation*}
\frac{|\mathscr{y}^{(k)}(\bar{w}_1+(-1)^{j+1}\frac{1}{2r_kk},\xb_*')-\mathscr{y}^{(k)}(\bar{w}_1+(-1)^j\frac{1}{2r_kk},\xb')|}{\sqrt{2}/k}>T.
\end{equation*}

\textit{Step }4. Since Claims 1 and $\sqrt{2}$ hold for every approximating sequence $(\mathscr{y}^{(k)})_{k=1}^\infty$ fulfilling \eqref{eq:msPhiApprox}, we get
\begin{equation*}
(\sharp\mathcal{L})\omega_{\rm NN}+\sharp\{(\xb',\xb_*')\in\mathcal{L}^2;\; |\xb'-\xb_*'|=1\}\omega_{\rm NNN}\leq\ph(u,R)+\e.
\end{equation*}
As this is valid for any $\e>0$, the desired conclusion follows.
\end{proof}

\begin{prop}
Under the assumptions of Proposition \ref{explCrack} and further supposing
\begin{enumerate}
\item[$\mathrm{(P6)}$] $W_{\rm NN}^{(k)}$, $W_{\rm NNN}^{(k)}$ are nondecreasing on $[1,\infty)$,
\end{enumerate}
we have
\begin{equation*}
0<\ph(0,R)<\ph(u,\tilde{R})
\end{equation*}
for any $R,\tilde{R}\in{\rm SO}(3)$, $R\neq\Id$ and $0\neq u\in\R^3$.
\end{prop}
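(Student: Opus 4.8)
The plan is to establish the two inequalities $\ph(0,R)>0$ and $\ph(0,R)<\ph(u,\tilde R)$ separately, leveraging the explicit value of $\ph$ on $\{u\neq0\}$ obtained in Proposition~\ref{explCrack} together with a dedicated analysis of the \emph{pure-kink} case $u=0$. First, the lower bound $\ph(0,R)>0$ for $R\neq\Id$ is immediate from Remark~\ref{phBdd} (which gives $\ph\ge\bar c_1>0$ on $\R^3\times{\rm SO}(3)\setminus\{(0,\Id)\}$), so the only real content is the strict inequality $\ph(0,R)<\ph(u,\tilde R)$. By Proposition~\ref{explCrack}, the right-hand side equals the $u,\tilde R$-independent constant $E_{\rm full}:=(\sharp\mathcal L)\,\omega_{\rm NN}+\sharp\{(\xb',\xb_*')\in\mathcal L^2;\,|\xb'-\xb_*'|=1\}\,\omega_{\rm NNN}$, the energy of a complete cleavage that severs every NN and every NNN bond crossing one lattice slice. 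So it suffices to show $\ph(0,R)<E_{\rm full}$ for every $R\in{\rm SO}(3)$, $R\neq\Id$.

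To prove this, I would exhibit an explicit competitor sequence $(\mathscr{v}^{(k)})$ in the cell formula \eqref{eq:phi} for the jump data $(u,R)=(0,R)$ whose asymptotic energy is strictly below $E_{\rm full}$. The natural choice is a \emph{fold}: pick a unit vector $e\perp$ (axis of $R$ in a suitable sense) or more simply just rotate the right half by $R$ about a point, i.e.\ set $\mathscr{v}^{(k)}(w_1,\xb')=R_-^{(k)}\T{(r_kw_1,\tfrac1k\xb')}+y^{(k)}$ for $w_1\le 0$ and $=R_+^{(k)}\T{(r_kw_1,\tfrac1k\xb')}+y^{(k)}$ for $w_1>0$ with $(R_-^{(k)})^{-1}R_+^{(k)}\to R$ and a \emph{common} translation $y^{(k)}$ chosen so that the two rigid pieces share the interface point, interpolated piecewise affinely across the one transition slice. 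Because the two pieces are hinged at a common point rather than pulled apart, along every atomic fibre the NN bond crossing the interface has its two endpoints at mutual distance $O(1/k)$ (both near $y^{(k)}$), hence its rescaled length stays bounded and that bond is deformed only elastically — contributing $O(1/k)$ rather than $\omega_{\rm NN}$. The key point is that for $R\neq\Id$, \emph{not every} crossing bond can be simultaneously kept short: the transverse stretching forces at least some bonds far apart, so the fold does cost a positive amount. But crucially it costs strictly less than $E_{\rm full}$ because at least the central fibre (the one passing through the hinge point) keeps its NN bond intact. One then estimates $\limsup_k \calE_k(\mathscr{v}^{(k)},[-1,1])$: the contribution splits into (i) broken bonds, bounded above by $E_{\rm full}$ minus the saved central NN bond energy $\omega_{\rm NN}>0$ (using (P3) that $k\bar W^{(k)}_{\rm NN}\le$ its limiting value plus error, and the monotonicity (P6) to control intermediate lengths), and (ii) elastically deformed bonds near the hinge, which is $O(1/k)\to0$ since the transition slice carries $O(\sharp\mathcal L')$ cells each with distance $O(1/k)$ from ${\rm SO}(3)$ by the rigidity of the two halves. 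Hence $\ph(0,R)\le \limsup_k\calE_k(\mathscr{v}^{(k)},[-1,1])\le E_{\rm full}-\omega_{\rm NN}<E_{\rm full}=\ph(u,\tilde R)$, using $\omega_{\rm NN}=\lim_k k\bar W^{(k)}_{\rm NN}(r_k)>0$ from (P3)/(P4)/(P5) (positivity of $\bar c_{\rm NN}$).

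The main obstacle I anticipate is making the claim "at least one full fibre keeps its interface NN bond of order $1/k$" rigorous while simultaneously controlling the transverse fibres and the NNN bonds: one must choose the hinge point $y^{(k)}$ (and the precise affine interpolation on the transition cell) so that the central bond genuinely stays in the elastic regime $r<1+c^{(k)}_{\rm frac}$ for all large $k$, and then argue that each remaining crossing bond contributes \emph{at most} its truncated value $\bar W^{(k)}_{\rm NN}$ (resp.\ $\bar W^{(k)}_{\rm NNN}$), for which the monotonicity (P6) on $[1,\infty)$ is exactly what is needed to bound the energy of a bond that is stretched to \emph{some} length $\ge1$ by the energy of the bond stretched all the way to $\infty$. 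A secondary technical point is bookkeeping the surface/end contributions $W^{(k)}_{\rm surf}$, $W^{(k)}_{\rm end}$ consistently with the bulk count so that the "$\sharp\mathcal L$ vs.\ $\sharp\mathcal L'$" discrepancy in $E_{\rm full}$ is reproduced correctly; this is routine given the constructions in Example~\ref{genPot} and the analogous computation in Proposition~\ref{explCrack}, Step~1, but it requires care. Once these are in place, combining with Remark~\ref{phBdd} for the strict positivity $\ph(0,R)>0$ completes the proof.
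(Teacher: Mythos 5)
Your high-level plan -- exhibit a ``fold'' configuration for the data $(0,R)$ with energy strictly below the full-crack value $E_{\rm full}=(\sharp\mathcal L)\omega_{\rm NN}+\sharp\{(\xb',\xb_*')\in\mathcal L^2;|\xb'-\xb_*'|=1\}\omega_{\rm NNN}$ -- is the same as the paper's, and the first inequality via Remark~\ref{phBdd} is handled identically. However, the specific construction you propose has a genuine gap.

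You hinge the two rigid halves directly (common translation $y^{(k)}$), and then invoke (P6) to bound the energy of ``each remaining crossing bond \emph{stretched to some length $\ge1$}'' by its value at infinity. But for a direct hinge there is no reason why every crossing bond should have rescaled length $\ge1$. For a generic rotation $R$ -- say a kink about $e_3$ by an intermediate angle -- the bond between $(0,\xb')$ and $(\tfrac{1}{r_kk},\xb')$ has rescaled length $\approx|(0,\xb')^\top-R(1,\xb')^\top|$, which can drop well below $1$ for $\xb'\ne0$. Assumption (P6) gives monotonicity only on $[1,\infty)$ and gives no control at all on $[0,1)$, where the pair potential may blow up (e.g.\ Lennard--Jones). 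So the claimed bound $\le E_{\rm full}-\omega_{\rm NN}$ does not follow: the saved $\omega_{\rm NN}$ from the central fibre could be more than offset by compressed transverse bonds. (A secondary imprecision: a bond whose rescaled length is merely inside the elastic window $1\pm c^{(k)}_{\rm frac}$ contributes, after multiplying by $k$, an amount of order $k(c^{(k)}_{\rm frac})^2=O(1)$, not $o(1)$; one really needs that bond to sit exactly at equilibrium, as the paper arranges.)

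The paper avoids this with a one-parameter shift instead of a fixed hinge. Starting from the fully separated crack configuration $\mathscr v^{(k)}(\,\cdot\,;0)$ of Proposition~\ref{explCrack}, Step~1 (with the same $R$), it translates the right half by $c_0^{(k)}(t)=t\bigl[\mathscr v^{(k)}(\tfrac1{r_kk},\xb_0')-\mathscr v^{(k)}(0,\xb_0')\bigr]$ and takes $t_0^{(k)}$ to be the \emph{smallest} $t\in[0,1]$ at which some NN bond reaches length $\tfrac1k$ or some NNN bond reaches $\tfrac{\sqrt2}k$. By continuity (all crossing bonds start strictly above equilibrium at $t=0$ and the $\xb_0'$-bond collapses to $0$ at $t=1$), at $t=t_0^{(k)}$ \emph{every} crossing bond has length $\ge$ its equilibrium value, with at least one exactly at equilibrium. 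This is precisely the geometric input that makes (P6) applicable to all remaining bonds and yields the strict deficit $-\min\{\omega_{\rm NN},\omega_{\rm NNN}\}$. Your approach is missing a mechanism of this kind; as written it does not close.
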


\begin{proof}
The first inequality was shown in Remark \ref{phBdd}.

As to the second inequality, Proposition \ref{explCrack} implies that for a nonzero $u$, the crack energy $\ph(u,R)$ is independent of $R$, hence we limit ourselves to the case $\tilde{R}=R$ without loss of generality. If $R\in{\rm SO}(3)$ and $u\in\R^3\setminus\{0\}$ are fixed, it is enough to find a sequence $(\mathscr{v}_0^{(k)})_{k=1}^\infty$ of deformations admissible in the definition of $\ph(0,R)$ such that
\begin{equation*}
\limsup_{k\goto\infty}\calE_{k}(\mathscr{v}_0^{(k)};[-1,1])<(\sharp\mathcal{L})\omega_{\rm NN}+\sharp\{(\xb',\xb_*')\in\mathcal{L}^2;\; |\xb'-\xb_*'|=1\}\omega_{\rm NNN}
\end{equation*}
by Proposition \ref{explCrack}. Fix $k\in\N$ and let $\mathscr{v}^{(k)}$, $R_\pm^{(k)}$, $r_k$, and $y_\pm^{(k)}$ be as in the proof of Proposition \ref{explCrack} with our new definitions of $R$ and $u$. We define 
\begin{align*}
 F^\pm:=\Bigl\{R_\pm^{(k)}\T{\bigl(\frac{1}{2k}\pm\frac{1}{2k},\frac{1}{k}\xb'\bigr)}+y_\pm^{(k)};\; \xb'\in\mathcal{L}\Bigr\}
\end{align*}
 and observe that ${\rm dist}(F^+,F^-) = |y^{(k)}_+-y^{(k)}_-|+O(\tg{\frac{1}{k}}) = |u| + \tg{o_{k\goto\infty}(1)}$. Now we choose $\xb'_0\in\mathcal{L}$ and consider configurations with shifted right parts, given by 
\begin{align*}
 \mathscr{v}^{(k)}(w_1, \xb'; t) 
=\begin{cases}
R_-^{(k)}\T{(r_kw_1,\frac{1}{k}\xb')}+y_-^{(k)} & \text{ on }[-1,0]\times S^{\rm ext}\\
R_+^{(k)}\T{(r_kw_1,\frac{1}{k}\xb')}+y_+^{(k)}-c_0^{(k)}(t) & \text{ on }[r_k^{-1}k^{-1},1]\times S^{\rm ext},
\end{cases} 
\end{align*}
where $c_0^{(k)}(t) = t[\mathscr{v}^{(k)}(\frac{1}{r_kk},\xb'_0) - \mathscr{v}^{(k)}(0,\xb'_0)]$, $t \in [0,1]$. 
We then define $t_0^{(k)}$ to be the smallest $t \in [0,1]$ such that 
\begin{align*}
\bigl|\mathscr{v}^{(k)}\bigl(\frac{1}{r_kk},\xb';t\bigr) - \mathscr{v}^{(k)}(0,\xb';t)\bigr| = \frac{1}{k} 
\qquad\text{or}\qquad
\bigl|\mathscr{v}^{(k)}\bigl(\frac{1}{r_kk},\xb_*';t\bigr) - \mathscr{v}^{(k)}(0,\xb_{**}';t)\bigr| = \frac{\sqrt{2}}{k}
\end{align*}
for some $\xb'\in\mathcal{L}$, \tg{or else}, $\xb_*',\xb_{**}'\in\mathcal{L}$ with $|\xb_*'-\xb_{**}'| = 1$, \tg{respectively}.  By construction such $t_0^{(k)} \in (0,1)$ exists if $k$ is large enough  and we have $|c_0^{(k)}(\tg{t_0^{(k)}}) - u|\tg{\goto 0}$ as $k\goto\infty$. Setting $\mathscr{v}^{(k)}_0 = \mathscr{v}^{(k)}(\;\cdot\; ; t_0^{(k)})$ and recalling $\mathrm{(P6)}$ we find
\begin{equation}\label{eq:vlessE}
\calE_{k}(\mathscr{v}_0^{(k)};[-1,1])\le(\sharp\mathcal{L})\omega_{\rm NN}+\sharp\{(\xb',\xb_*')\in\mathcal{L}^2;\; |\xb'-\xb_*'|=1\}\omega_{\rm NNN}-\min\{\omega_{\rm NN},\omega_{\rm NNN}\}.
\end{equation}
We still need to check that the sequence $(\mathscr{v}_0^{(k)})_{k=1}^\infty$ thus constructed satisfies the correct boundary conditions for $\ph(0,R)$. But this is clear, since $|y_+^{(k)}-c_0^{(k)}(t_0^{(k)})-y_-^{(k)}|\tg{\goto 0}$.
\end{proof}

\section{Discussion}

Our work makes a contribution to the modelling of elastic-brittle ultrathin structures, but as such, it could be certainly extended in various directions. 

We remark that the situation becomes considerably more difficult for plates due to a much richer phenomenology of crack and kink patterns. For bending-dominated configurations also severe geometric obstructions that result from the isometry constraints are encountered. A first step has recently been achieved in \cite{BZK}, where a `Blake--Zisserman--Kirchhoff theory' has been derived for plates with soft inclusions.

From the point of view of applications, it would be interesting to extend our findings to other crystallographic lattices (such as diamond cubic as in \cite{LazSchlo2} or zincblende), heterogeneous nanostructures with several different types of atoms, or to study the influence of lattice defects.

The model could also be studied computationally (e.g. numerical approximations of the cell formula could be implemented).

\section*{Acknowledgements}
The authors acknowledge the support of the Deutsche Forschungsgemeinschaft (DFG, German Research Foundation) within the Priority Programme SPP 2256 `Variational Methods for Predicting Complex Phenomena in Engineering Structures and Materials'.

\bibliographystyle{alpha} 
\renewcommand{\bibname}{References}
\bibliography{bibliography}

\newcommand{\etalchar}[1]{$^{#1}$}
\begin{thebibliography}{LBBB{\etalchar{+}}14}

\bibitem[ABC08]{ABC08}
R.~Alicandro, A.~Braides, and M.~Cicalese.
\newblock Continuum limits of discrete thin films with superlinear growth
  densities.
\newblock {\em Calc. Var. Partial Differential Equations}, 33(3):267--297,
  2008.

\bibitem[ABMP20]{fracShells}
S.~Almi, S.~Belz, S.~Micheletti, and S.~Perotto.
\newblock A dimension-reduction model for brittle fractures on thin shells with
  mesh adaptivity, 2020.
\newblock arXiv:2004.08871.

\bibitem[ABP91]{strings}
E.~Acerbi, G.~Buttazzo, and D.~Percivale.
\newblock A variational definition of the strain energy for an elastic string.
\newblock {\em J. Elast.}, 25:137--148, 1991.

\bibitem[ABP94]{Anze}
G.~Anzellotti, S.~Baldo, and D.~Percivale.
\newblock Dimension reduction in variational problems, asymptotic development
  in {$\Gamma$}-convergence and thin structures in elasticity.
\newblock {\em Asympt. Anal.}, 9:61--100, 1994.

\bibitem[AC04]{AC}
R.~Alicandro and M.~Cicalese.
\newblock A general integral representation result for continuum limits of
  discrete energies with superlinear growth.
\newblock {\em SIAM J. Math. Analysis}, 36:1--37, 2004.

\bibitem[ACDM97]{BD}
L.~Ambrosio, A.~Coscia, and G.~Dal~Maso.
\newblock Fine properties of functions with bounded deformation.
\newblock {\em Arch. Rational Mech. Anal.}, 139:201–238, 1997.

\bibitem[AFP00]{SBVbook}
L.~Ambrosio, N.~Fusco, and D.~Pallara.
\newblock {\em Functions of bounded variation and free discontinuity problems}.
\newblock Oxford Mathematical Monographs. Clarendon Press, Oxford, 2000.

\bibitem[Ant05]{Antman}
S.~Antman.
\newblock {\em Nonlinear Problems of Elasticity}.
\newblock Springer, New York, 2nd edition, 2005.

\bibitem[ARS22]{membNotPenet}
S.~Almi, D.~Reggiani, and F.~Solombrino.
\newblock Brittle membranes in finite elasticity, 2022.
\newblock arXiv:2204.04171.

\bibitem[AT20]{AlmiTasso}
S.~Almi and E.~Tasso.
\newblock Brittle fracture in linearly elastic plates, 2020.
\newblock arXiv:2006.09150.

\bibitem[Bab06]{babadjian}
J.-F. Babadjian.
\newblock Quasistatic evolution of a brittle thin film.
\newblock {\em Calc. Var. Partial Differential Equations}, 26, 2006.

\bibitem[Bal02]{openPbs}
J.~Ball.
\newblock Some open problems in elasticity.
\newblock In P.~Newton, P.~Holmes, and A.~Weinstein, editors, {\em Geometry,
  Mechanics, and Dynamics}, pages 3--59. Springer, New York, 2002.

\bibitem[Bar62]{Barenblatt}
G.~I. Barenblatt.
\newblock The mathematical theory of equilibrium cracks in brittle fracture.
\newblock {\em Adv. Appl. Mech.}, 7:55--129, 1962.

\bibitem[BBL07]{blanc2}
X.~Blanc, C.~Bris, and P.~Lions.
\newblock Atomistic to continuum limits for computational materials science.
\newblock {\em ESAIM Math. Model. Numer. Anal.}, 41(2):91–426, 2007.

\bibitem[BC07]{braiCica}
A.~Braides and M.~Cicalese.
\newblock Surface energies in nonconvex discrete systems.
\newblock {\em Math. Models Methods Appl. Sci.}, 17:985--1037, 2007.

\bibitem[BF01]{BraiFonseca}
A.~Braides and I.~Fonseca.
\newblock Brittle thin films.
\newblock {\em Appl. Math. Optim.}, 44:299--323, 2001.

\bibitem[BFM08]{BFM}
B.~Bourdin, G.~A. Francfort, and J.-J. Marigo.
\newblock The variational approach to fracture.
\newblock {\em J. Elasticity}, 91:5–148, 2008.

\bibitem[BG02]{BGlong}
A.~Braides and M.~S. Gelli.
\newblock Limits of discrete systems with long-range interactions.
\newblock {\em Journal of Convex Analysis}, 9(2), 2002.

\bibitem[BH16]{BabHenao}
J.-F. Babadjian and D.~Henao.
\newblock Reduced models for linearly elastic thin films allowing for fracture,
  debonding or delamination.
\newblock {\em Interfaces Free Bound.}, 18(4):545–578, 2016.

\bibitem[BHO20]{Hud20}
M.~Buze, T.~Hudson, and C.~Ortner.
\newblock Analysis of cell size effects in atomistic crack propagation.
\newblock {\em ESAIM Math. Model. Numer. Anal.}, 54(6):1821--1847, 2020.

\bibitem[BKG15]{atomFrac}
E.~Bitzek, J.~Kermode, and P.~Gumbsch.
\newblock Atomistic aspects of fracture.
\newblock {\em Int. J. Fract.}, 191:13--30, 2015.

\bibitem[BLBL02]{blanc}
X.~Blanc, C.~Le~Bris, and P.-L. Lions.
\newblock From molecular models to continuum mechanics.
\newblock {\em Arch. Rational Mech. Anal.}, 164:341--381, 2002.

\bibitem[Bra02]{BraiBeg}
A.~Braides.
\newblock {\em {$\Gamma$}-convergence for beginners}, volume~22 of {\em Oxford
  Lecture Series in Mathematics and its Applications}.
\newblock Oxford University Press, Oxford, 2002.

\bibitem[Bra06]{BraiHand}
A.~Braides.
\newblock A handbook of {$\Gamma$}-convergence.
\newblock In M.~Chipot and P.~Quittner, editors, {\em Handbook of Diﬀerential
  Equations: Stationary Partial Differential Equations}, volume~3, pages
  101--213. Elsevier, 2006.

\bibitem[Bra17]{Br17}
J.~Braun.
\newblock Connecting atomistic and continuous models of elastodynamics.
\newblock {\em Arch. Ration. Mech. Anal.}, 224(3):907--953, 2017.

\bibitem[BS13]{BrS13}
J.~Braun and B.~Schmidt.
\newblock On the passage from atomistic systems to nonlinear elasticity theory
  for general multi-body potentials with p-growth.
\newblock {\em Netw. Heterog. Media}, 8(4):879--912, 2013.

\bibitem[BS16]{BrS16}
J.~Braun and B.~Schmidt.
\newblock Existence and convergence of solutions of the boundary value problem
  in atomistic and continuum nonlinear elasticity theory.
\newblock {\em Calc. Var. Partial Differential Equations}, 55(125), 2016.

\bibitem[BS22]{BrS19}
J.~Braun and B.~Schmidt.
\newblock An atomistic derivation of von-{K}ármán plate theory.
\newblock {\em Netw. Heterog. Media}, online first, 2022.

\bibitem[CDKM06]{Conti}
S.~Conti, G.~Dolzmann, Bernd Kirchheim, and S.~Müller.
\newblock Sufficient conditions for the validity of the {C}auchy-{B}orn rule
  close to {SO}(n).
\newblock {\em J. Eur. Math. Soc.}, 8:515–530, 2006.

\bibitem[CFL02]{phasTra}
S.~Conti, I.~Fonseca, and G.~Leoni.
\newblock A {$\Gamma$}-convergence result for the two-gradient theory of phase
  transitions.
\newblock {\em Comm. Pure Appl. Math.}, 55(7):857--936, 2002.

\bibitem[Cia97]{Ciarlet2}
P.~Ciarlet.
\newblock {\em Mathematical Elasticity. Vol. II: Theory of Plates}, volume~27
  of {\em Studies in Mathematics and Its Applications}.
\newblock Elsevier, Amsterdam, 1997.

\bibitem[CL16]{mechNW}
Yujie Chen and Xiaozhou Liao.
\newblock Chapter four -- mechanical behaviors of semiconductor nanowires.
\newblock In S.~A. Dayeh, A.~{Fontcuberta i Morral}, and C.~Jagadish, editors,
  {\em Semiconductor Nanowires II: Properties and Applications}, volume~94 of
  {\em Semiconductors and Semimetals}, pages 109--158. Elsevier, 2016.

\bibitem[CS06]{2Dsolid}
S.~Conti and B.~Schweizer.
\newblock Rigidity and {G}amma convergence for solid-solid phase transitions
  with {SO}(2) invariance.
\newblock {\em Comm. Pure Appl. Math.}, 59:830–868, 2006.

\bibitem[DM13]{GBD}
G.~Dal~Maso.
\newblock Generalised functions of bounded deformation.
\newblock {\em J. Eur. Math. Soc.}, 015(5):1943--1997, 2013.

\bibitem[EG15]{Gariepy}
L.~C. Evans and R.~F. Gariepy.
\newblock {\em Measure Theory and Fine Properties of Functions}.
\newblock Textbooks in mathematics. CRC Press, Boca Raton, revised edition,
  2015.

\bibitem[EM07]{EMing}
Weinan E and Pingbing Ming.
\newblock {C}auchy–{B}orn rule and the stability of crystalline solids:
  Static problems.
\newblock {\em Arch. Rational Mech. Anal.}, 183:241--297, 2007.

\bibitem[Eva20]{nanoMod}
R.~A. Evarestov.
\newblock {\em Theoretical Modeling of Inorganic Nanostructures}.
\newblock Springer, Cham, Switzerland, 2nd edition, 2020.

\bibitem[FJ00]{FJ00}
G.~Friesecke and R.~D. James.
\newblock A scheme for the passage from atomic to continuum theory for thin
  films, nanotubes and nanorods.
\newblock {\em J. Mech. Phys. Solids}, 48:1519–1540, 2000.

\bibitem[FJM02]{FrM02}
G.~Friesecke, R.~D. James, and S.~Müller.
\newblock A theorem on geometric rigidity and the derivation of nonlinear plate
  theory from three-dimensional elasticity.
\newblock {\em Comm. Pure Appl. Math.}, 55(11):1461--1506, 2002.

\bibitem[FJM06]{FrM06}
G.~Friesecke, R.~D. James, and S.~Müller.
\newblock A hierarchy of plate models derived from nonlinear elasticity by
  {$\Gamma$}-convergence.
\newblock {\em Arch. Rational Mech. Anal.}, 180:183–236, 2006.

\bibitem[FKS21]{polycrystals}
M.~Friedrich, L.~Kreutz, and B.~Schmidt.
\newblock Emergence of rigid polycrystals from atomistic systems with
  {H}eitmann-{R}adin sticky disk energy.
\newblock {\em Arch. Rational Mech. Anal.}, 240:627–698, 2021.

\bibitem[FKZ21]{FrVoids}
M.~Friedrich, L.~Kreutz, and K.~Zemas.
\newblock Geometric rigidity in variable domains and derivation of linearized
  models for elastic materials with free surfaces, 2021.
\newblock arXiv:2107.10808.

\bibitem[FL07]{Fonseca}
I.~Fonseca and G.~Leoni.
\newblock {\em Modern Methods in the Calculus of Variations: $L^p$ Spaces}.
\newblock Springer Monographs in Mathematics. Springer, New York, 2007.

\bibitem[FM92]{blowup}
I.~Fonseca and S.~Müller.
\newblock Quasi-convex integrands and lower semicontinuity in {$L^1$}.
\newblock {\em SIAM J. Math. Anal.}, 23(5):1081–1098, 1992.

\bibitem[FM98]{FM98}
G.~A. Francfort and J.-J. Marigo.
\newblock Revisiting brittle fracture as an energy minimization problem.
\newblock {\em J. Mech. Phys. Solids}, 46:1319–1342, 1998.

\bibitem[FPZ10]{FPZ10}
L.~Freddi, R.~Paroni, and C.~Zanini.
\newblock Dimension reduction of a crack evolution problem in a linearly
  elastic plate.
\newblock {\em Asymptot. Anal.}, 70:101–123, 2010.

\bibitem[Fra21]{F20yrs}
G.~A. Francfort.
\newblock Variational fracture: twenty years after.
\newblock {\em Int. J. Fract.}, page 1–11, 2021.

\bibitem[FS14]{FrdS14}
M.~Friedrich and B.~Schmidt.
\newblock An atomistic-to-continuum analysis of crystal cleavage in a
  two-dimensional model problem.
\newblock {\em J. Nonlinear Sci.}, 24, 2014.

\bibitem[FS15a]{FrdS15}
M.~Friedrich and B.~Schmidt.
\newblock An analysis of crystal cleavage in the passage from atomistic models
  to continuum theory.
\newblock {\em Arch. Ration. Mech. Anal.}, 217:263–308, 2015.

\bibitem[FS15b]{FrdS15b}
M.~Friedrich and B.~Schmidt.
\newblock On a discrete-to-continuum convergence result for a two dimensional
  brittle material in the small displacement regime.
\newblock {\em Netw. Heterog. Media}, 10(2):321--342, 2015.

\bibitem[FT02]{valFail}
G.~Friesecke and F.~Theil.
\newblock Validity and failure of the {C}auchy-{B}orn hypothesis in a
  two-dimensional mass-spring lattice.
\newblock {\em J. Nonlinear Sci.}, 12:445–478, 2002.

\bibitem[GG21]{Ginster}
J.~Ginster and P.~Gladbach.
\newblock The {E}uler-{B}ernoulli limit of thin brittle linearized elastic
  beams, 2021.
\newblock arXiv:2111.09706.

\bibitem[HE83]{HEvans}
B.~L. Holian and D.~J. Evans.
\newblock Shear viscosities away from the melting line: A comparison of
  equilibrium and nonequilibrium molecular dynamics.
\newblock {\em J. Chem. Phys.}, 78(8):5147--5150, 1983.

\bibitem[Hor21]{hornung}
P.~Hornung.
\newblock Deformation of framed curves with boundary conditions.
\newblock {\em Calc. Var. Partial Differential Equations}, 60(87), 2021.

\bibitem[JKST21]{jansen}
S.~Jansen, W.~K{\"o}nig, B.~Schmidt, and F.~Theil.
\newblock Distribution of cracks in a chain of atoms at low temperature.
\newblock {\em Ann. Henri Poincaré}, 22:4131–4172, 2021.

\bibitem[KLR17]{surfE2D}
G.~Kitavtsev, S.~Luckhaus, and A.~Rüland.
\newblock Surface energies emerging in a microscopic, two-dimensional two-well
  problem.
\newblock {\em Proc. Roy. Soc. Edinburgh Sect. A}, 147(5):1041–1089, 2017.

\bibitem[LBBB{\etalchar{+}}14]{baldelli}
A.~A. Léon~Baldelli, J.-F. Babadjian, B.~Bourdin, D.~Henao, and C.~Maurini.
\newblock A variational model for fracture and debonding of thin films under
  in-plane loadings.
\newblock {\em J. Mech. Phys. Solids}, 70:320–348, 2014.

\bibitem[LDR93]{membranes}
H.~Le~Dret and A.~Raoult.
\newblock Le mod\`{e}le de membrane non lin\'{e}aire comme limite
  variationnelle de l'\'{e}lasticit\'{e} non lin\'{e}aire tridimensionnelle.
\newblock {\em C. R. Acad. Sci. Paris}, 317:221--226, 1993.

\bibitem[LPS17]{LazSchlo2}
G.~Lazzaroni, M.~Palombaro, and A.~Schlömerkemper.
\newblock Rigidity of three-dimensional lattices and dimension reduction in
  heterogeneous nanowires.
\newblock {\em Discrete Contin. Dyn. Syst. Ser. S}, 10(1):119--139, 2017.

\bibitem[MM03]{MMh4}
M.~G. Mora and S.~Müller.
\newblock Derivation of the nonlinear bending-torsion theory for inextensible
  rods by {$\Gamma$}-convergence.
\newblock {\em Calc. Var. Partial Differential Equations}, 18:287--305, 2003.

\bibitem[MM04]{MMh6}
M.~G. Mora and S.~M{\"u}ller.
\newblock A nonlinear model for inextensible rods as a low energy
  {$\Gamma$}-limit of three-dimensional nonlinear elasticity.
\newblock {\em Ann. Inst. Henri Poincaré (C) Anal. Non Linéaire},
  21:271--293, 2004.

\bibitem[O'R17]{OReilly}
O.~M. O'Reilly.
\newblock {\em Modeling Nonlinear Problems in the Mechanics of Strings and
  Rods}.
\newblock Springer, Cham, 1st edition, 2017.

\bibitem[OT13]{OT13}
C.~Ortner and F.~Theil.
\newblock Justification of the {C}auchy-{B}orn approximation of elastodynamics.
\newblock {\em Arch. Rational Mech. Anal.}, 207(3):1025--1073, 2013.

\bibitem[Sch06]{BS06}
B.~Schmidt.
\newblock A derivation of continuum nonlinear plate theory from atomistic
  models.
\newblock {\em Multiscale Model. Simul.}, 5:664--694, 2006.

\bibitem[Sch08a]{BS08}
B.~Schmidt.
\newblock On the passage from atomic to continuum theory for thin films.
\newblock {\em Arch. Rational Mech. Anal.}, 190(1):1--55, 2008.

\bibitem[Sch08b]{BS08prop}
B.~Schmidt.
\newblock Qualitative properties of a continuum theory for thin films.
\newblock {\em Ann. Inst. Henri Poincaré (C) Anal. Non Linéaire},
  25(1):43--75, 2008.

\bibitem[Sch09]{BS09}
B.~Schmidt.
\newblock On the derivation of linear elasticity from atomistic models.
\newblock {\em Netw. Heterog. Media}, 4:789--812, 12 2009.

\bibitem[Sch17]{BS17}
B.~Schmidt.
\newblock A {G}riffith-{E}uler-{B}ernoulli theory for thin brittle beams
  derived from nonlinear models in variational fracture mechanics.
\newblock {\em Math. Models Methods Appl. Sci.}, 27:1685--1726, 2017.

\bibitem[SS22]{BZK}
M.~Santilli and B.~Schmidt.
\newblock A {B}lake-{Z}isserman-{K}irchhoff theory for plates with soft
  inclusions, 2022.
\newblock arXiv:2205.04512.

\bibitem[SSZ11]{scaSchlo}
L.~Scardia, A.~Schl{\"o}merkemper, and C.~Zanini.
\newblock Boundary layer energies for nonconvex discrete systems.
\newblock {\em Math. Models Methods Appl. Sci.}, 21, 2011.

\bibitem[SZ22]{elRods}
B.~Schmidt and J.~Zeman.
\newblock A bending-torsion theory for thin and ultrathin rods as a
  {$\Gamma$}-limit of atomistic models, 2022.
\newblock arXiv:2208.04199.

\end{thebibliography}
\end{document}